\newtheorem{theorem}{Theorem}[section]
\newtheorem{lemma}[theorem]{Lemma}
\newtheorem{cor}[theorem]{Corollary}
\newtheorem{definition}[theorem]{Definition}
\newtheorem{proposition}[theorem]{Proposition}
\newtheorem{remark}[theorem]{Remark}
\newtheorem{example}[theorem]{Example}
\newtheorem{conclusions}[theorem]{Conclusions}
\def\pagenumber{1}
\begin{document}
\setcounter{page}{\pagenumber}
\newcommand{\T}{\mathbb{T}}
\newcommand{\R}{\mathbb{R}}
\newcommand{\Q}{\mathbb{Q}}
\newcommand{\N}{\mathbb{N}}
\newcommand{\Z}{\mathbb{Z}}
\newcommand{\tx}[1]{\quad\mbox{#1}\quad}
\parindent=0pt
\def\SRA{\hskip 2pt\hbox{$\joinrel\mathrel\circ\joinrel\to$}}
\def\tbox{\hskip 1pt\frame{\vbox{\vbox{\hbox{\boldmath$\scriptstyle\times$}}}}\hskip 2pt}
\def\circvert{\vbox{\hbox to 8.9pt{$\mid$\hskip -3.6pt $\circ$}}}
\def\IM{\hbox{\rm im}\hskip 2pt}
\def\COIM{\hbox{\rm coim}\hskip 2pt}
\def\COKER{\hbox{\rm coker}\hskip 2pt}
\def\TR{\hbox{\rm tr}\hskip 2pt}
\def\GRAD{\hbox{\rm grad}\hskip 2pt}
\def\RANK{\hbox{\rm rank}\hskip 2pt}
\def\MOD{\hbox{\rm mod}\hskip 2pt}
\def\DEN{\hbox{\rm den}\hskip 2pt}
\def\DEG{\hbox{\rm deg}\hskip 2pt}

\title[Strong reactions in quantum super PDE's. III]{STRONG REACTIONS IN QUANTUM SUPER PDE's. III:\\ EXOTIC QUANTUM SUPERGRAVITY}

\author{Agostino Pr\'astaro}
\maketitle
\vspace{-.5cm}
{\footnotesize
\begin{center}
Department SBAI - Mathematics, University of Rome ''La Sapienza'', Via A.Scarpa 16,
00161 Rome, Italy. \\
E-mail: {\tt agostino.prastaro@uniroma1.it}
\end{center}
}

\vskip 0.5cm
\centerline{\em This work in three parts is dedicated to Albert Einstein and Max Planck.}
\vskip 0.5cm

\begin{abstract}
Following the previous two parts, of a work devoted to encode strong reaction dynamics in the A. Pr\'astaro's algebraic topology of quantum super PDE's, nonlinear quantum propagators in the observed quantum super Yang-Mills PDE, $\widehat{(YM)}[i]$, are further characterized.  In particular, nonlinear quantum propagators with non-zero defect quantum electric-charge, are interpreted as {\em exotic-quantum supergravity} effects. As an application, the recently discovered bound-state called $Zc(3900)$, is obtained as a neutral quasi-particle, generated in a $Q$-quantum exotic supergravity process. {\em Quantum entanglement} is justified by means of the algebraic topologic structure of nonlinear quantum propagators. Quantum Cheshire cats are considered as examples of quantum entanglements. Higgs quantum particle $H^0$, $Z^0$ boson quantum particle and massive quantum graviton particle $G'$ are proved to be examples of massive bound states of quantum photons, entangled with quantum photons by means of nonlinear quantum propagators. Existence theorem for solutions of $\widehat{(YM)}[i]$ admitting negative local temperatures ({\em quantum thermodynamic-exotic solutions}) is obtained too and related to quantum entanglement. Such exotic solutions are used to encode Universe at the Planck-epoch. It is proved that the Universe's expansion at the Planck epoch is justified by the fact that it is encoded by a nonlinear quantum propagator having thermodynamic quantum exotic components in its boundary. This effect produces also an increasing of energy in the Universe at the Einstein epoch: {\em Planck-epoch-legacy} on the boundary of our Universe. This is the main source of the Universe's expansion and solves the problem of the non-apparent energy-matter ({\em dark-energy-matter}) in the actual Universe.  Breit-Wheeler-type processes have been proved in the framework of the Pr\'astaro's algebraic topology of quantum super Yang-Mills PDEs. Numerical comparisons of nonlinear quantum propagators with Weinberg-Salam electroweak theory in Standard Model are given.
\end{abstract}

\vskip 0.5cm

\noindent {\bf AMS Subject Classification:} 55N22, 58J32, 57R20; 58C50; 58J42; 20H15; 32Q55; 32S20; 85A40.

\vspace{.08in} \noindent \textbf{Keywords}: Integral (co)bordism groups in quantum (super) PDE's; Conservation laws;
$Q$-exotic nonlinear quantum propagators; Quantum gravity; Quantum exotic decay of $Zc(3900)$; Quantum entanglement; Quantum Cheshire cats; Neutral quantum bound states of photons; Massive quantum gravitons; Quantum negative temperature; Quantum cosmology; Dark matter-energy; Weinberg-Salam theory; Standard Model.

\section[Introduction]{\bf Introduction}

\rightline{\footnotesize {\em How exotic is the $Q$-exotic electron decay ?}}

\rightline{\footnotesize {\em How exotic is the charmonium decay $Zc(3900)\to \pi^-J/\psi$  ?}}

\rightline{\footnotesize {\em How paradoxical ``quantum Cheshire cats"  are ?}}

\rightline{\footnotesize {\em ``Do gravitons exist ? ... Is a graviton detectable ? "}}

\rightline{\footnotesize {\em How many gluons there are in proton ? }}

\rightline{\footnotesize {\em Why the Universe expands ?} }

\rightline{\footnotesize {\em What is the dark-energy-matter ?} }

\vskip 0.5cm

In two previous papers \cite{PRAS28, PRAS29}, we have characterized nonlinear quantum propagators of the observed quantum super Yang-Mills PDE, $\widehat{(YM)}[i]$, proving that the total quantum energy and the total quantum electric-charge of incident particles in quantum reactions do not necessitate to be the same of outgoing particles. These important phenomena, that are related to symmetry properties and gauge invariance of $\widehat{(YM)}[i]$, were non-well previously understood and wrongly interpreted.
Really the main origins of such phenomena are just symmetry properties of $\widehat{(YM)}[i]$, beside the structure of the nonlinear quantum propagators.  This fundamental aspect of quantum reactions in $\widehat{(YM)}[i]$, gives strong theoretical support to the guess about existence of quantum reactions where the ``electric-charge'' is not conserved. The conservation of the electric charge was quasi a dogma in particle physics. However, there are in the world many heretical experimental efforts to prove existence of decays like the following $e^-\to \gamma+\nu$, i.e. electron decay into a photon and neutrino. In this direction some first weak experimental evidences were recently obtained.\footnote{See \cite{GIAMMARCHI}. Some other exotic decays were also investigated, as for example the exotic neutron's decay: $n\to p+\nu+\bar\nu$. \cite{NORMANN-BACHALL-GOLDHABER}.}

Aim of this third paper is to further characterize a criterion to recognize under which constraints, nonlinear quantum propagators preserve quantum electric charges between incoming and outgoing particles. The main result of this third part is the existence of a sub-equation $\widetilde{\widehat{(YM)}[i]_{\bullet}}\subset \widehat{(YM)}[i]$, were live solutions strictly respecting the conservation of the quantum electric charge. Instead, solutions bording Cauchy data contained in the sub-equation $\widetilde{\widehat{(YM)}[i]_{\bullet}}\subset \widehat{(YM)}[i]$, that are globally outside $\widetilde{\widehat{(YM)}[i]_{\bullet}}$, can violate the conservation of the quantum electric charge. This effect is interpreted caused by the quantum supergravity. The action of the quantum supergravity is able to guarantee existence of such more general nonlinear quantum propagators in quantum super Yang-Mills PDEs. In fact quantum supergravity can deform nonlinear quantum propagators in order that they can produce such exotic solutions of $\widehat{(YM)}[i]$. In other words, quantum exotic strong reactions exist as a by-product of quantum supergravity that produces non-flat nonlinear quantum propagators. In the Standard Model quantum supergravity is completely forgotten. Without quantum supergravity, exotic quantum propagators cannot be realized ! The main results are the following. Theorem \ref{zero-defect-quantum-electric-charge} formulates a criterion to recognize under which constraints nonlinear quantum propagators have zero defect quantum electric-charge. Our main result is the identification of a sub-equation $\widetilde{\widehat{(YM)}[i]_{\bullet}}\subset \widehat{(YM)}[i]$, that is formally integrable and completely integrable, such that all quantum reactions encoded there, are characterized by non-$Q$-exotic nonlinear quantum propagators. Theorem \ref{non-conservation-of-quantum-electric-charge} states that $Q$-exotic nonlinear quantum propagators of $\widehat{(YM)}[i]$ are solutions with exotic-quantum supergravity, i.e., having non zero observed quantum curvature components $\widetilde{R}^{\beta\alpha}_K$. As an application we consider in Example \ref{quantum-exotic-zc-3900-decay} the $Q$-quantum exotic decay of $Zc(3900)$, considered a neutral charmonium. In fact supported by Theorem \ref{non-conservation-of-quantum-electric-charge}, we are able to prove that there exits a nonlinear quantum propagator encoding such decay where the conservation of charge is violated. The algebraic topologic structure of nonlinear quantum propagators allows us to justify the so-called phenomenon of {\em quantum entanglement}. See Theorem \ref{quantum-entanglement-in-observed-quantum-yang-mills-pdes}. This proves that the EPR paradox is completely solved in the framework of the Algebraic Topology of quantum PDE's, as formulated by A. Pr\'astaro. In fact, EPR paradox is related to a macroscopic model of physical world (Einstein's General Relativity (GR)). In order to reconcile this model with quantum mechanics, it is necessary to extend GR to a noncommutative geometry, as made by the Pr\'astaro's Algebraic Topology of quantum (super) PDE's. In fact the logic of micro-worlds is not commutative, hence it is natural that macroscopic mathematical models cannot justify quantum dynamics. In other words, the incompleteness of quantum mechanics (QM)(see \cite{EINSTEIN-PODOLSKY-ROSEN}) is the complementary incompleteness of the GR, since they talk at different levels: microscopic the first (QM), macroscopic the second (GR). These different points of view can be reconciled by introducing the noncommutative logic of the QM in the geometric point of view of the GR. But this must be made at the dynamic level ! It is not enough to formulate some noncommutative geometry to encode quantum worlds ! Therefore the necessity to formulate a geometric theory of quantum PDE's as has been realized by A. Pr\'astaro. Then many quantum paradoxical phenomena find natural justifications. For example the `` quantum Cheshire cat paradox" is well justified in the framework of the quantum entanglement related to nonlinear quantum propagators. Entangled photons are proved produce neutral bound states of photons. Of this type are also nonlinear quantum propagators able to produce Higgs quantum particle $H^0$, $Z^0$ boson quantum particle and massive graviton quantum particle $G'$. Quantum entanglements allow to obtain dynamic relations between quantum particles considered in the Standard Model and that Standard Model does not permit. (See Example \ref{exotic-quntum-dynamic-standard-model-extension}, Remark \ref{quantum-entanglement-and-quantum-bound-states}, Example \ref{how-many-gluons-there-are-in-a-proton},  Example \ref{gravitons-in-a-proton} and Example \ref{gravitons-proton-decays}.)  Theorem \ref{existence-quantum-thermodynamic-exotic-solutions} proves existence of solutions of $\widehat{(YM)}[i]$ admitting negative (absolute) temperature and Theorem \ref{quantum-entanglement-thermodynamic-exotic-solutions} relates such solutions to quantum entanglement. Exotic quantum solutions are used to encode quantum universe admitting electric-charge violation and non-equilibrium thermodynamic processes. These solutions solve the problem to justify the asymmetry between matter and antimatter existing in our Universe.

Another important subject considered in this third part is the proof that the geometric structure of quantum propagator encoding Universe at the Planck epoch is the cause of the Universe's expansion.\footnote{Georges Lemaître (1927) and  Edwin Hubble (1929) first proposed that the Universe is expanding. Lemaitre used Einstein's General Relativity equations and Hubble estimated value of the rate of expansion by observed redshifts. The most precise measurement of the rate of the Universe's expansion, has been obtained by NASA's Spitzer Space Telescope and published in October 2012. Very recently (March 17, 2014) some scientists of the Harvard-Smithsonian Center for Astrophysics, announced that observations with the telescope Bicep2 (Background Imaging of Cosmic Extragalactic Polarization), located at the South Pole, allowed to give an experimental proof of the Big Bang.  (See the following link \href{http://www.cfa.harvard.edu/news/2014-05}{http://www.cfa.harvard.edu/news/2014-05} reporting on the {\em first direct evidence of Cosmic Inflation}.)}
This expansion is not caused by a strange exoteric force, but it is the boundary-effect of the nonlinear quantum propagator encoding the Universe. In fact this propagator has a boundary with thermodynamic quantum exotic components. The presence of such exotic components produces an increasing of energy contents in the Universe, seen as transversal sections of such a nonlinear quantum propagator. To the increasing of energy corresponds an increasing in the expansion of the Universe. Such expansion has produced the passage of the Universe from the Higgs-universe, the first massive universe at the Planck epoch, to the actual macroscopic one. However, yet in such a macroscopic age the expansion of the Universe can be justified by using the same philosophy. This will be illustrated  by adopting the Einstein's General Relativity equations, but taking into account the effect of its quantum origin ({\em Planck-epoch-legacy}). With this respect one can state that the so-called dark-energy-matter, is nothing else than the increasing in energy produced by the thermodynamic exotic boundary encoding the Universe. Therefore it is a pure geometrodynamic bordism effect that produces an expansion of our Universe also at the Einstein epoch. Paradoxically this is a consequence of the energy conservation law that continues to work whether at the Planck epoch or at the Einstein age.
 The main result is Theorem \ref{expansiveness-criterion-universe-planck-epoch} characterizing the expansion of the Universe at the Planck epoch on the ground of its boundary containing thermodynamic quantum exotic components. Furthermore, Theorem \ref{universe-einstein-epoch-expansion} extends this result also to the Universe at the Einstein epoch. Finally Pr\'astaro's algebraic topology of quantum super Yang-Mills PDEs allows to justify Breit-Wheeler-type processes for the production of matter by means of high energy photons scatterings.(See Theorem \ref{matter-photo-productiion}.) In Appendices some numerical results are given that emphasize the importance of nonlinear quantum propagators also with respect to some great results of the Standard Model like the Weinberg-Salam electroweak theory.

 Let us also here emphasize that these new results have been possible thanks to the Algebraic Topology of Quantum Supermanifolds formulated by A. Pr\'astaro in a series of early published works. These allow to go beyond the usual point of view, adopted by physicists, that considers quantum supergravity as classical supergravity quantized by means of standard methods. In fact, that approach cannot well interpret nonlinear quantum phenomena that are dominant in high energy physics. Really the usual quantization of supergravity is made by means of so-called quantum propagators that are essentially obtained from the classical theory by a proceeding of linearization of such a theory. (See, e.g. \cite{DEW1, DEW2, NIEUWENHUIZEN} and \cite{PRA5, PRA6, PRA7, PRA8} for the corresponding interpretation in the geometry of super PDEs.) Instead the new Pr\'astaro's concept of {\em nonlinear quantum propagator} allows to consider more general non-commutative PDEs and a nonlinear process of integration in the category of quantum supermanifolds. In such a way one can encode in a natural way all very complex nonlinear quantum phenomena in high energy physics as, for example, quantum black-holes and quantum entanglements that it is impossible to completely encode in the the usual quantum supergravity. (See \cite{PRAS14, PRAS15, PRAS19, PRAS21-1, PRAS21, PRAS22, PRAS27, PRAS28, PRAS29}.)

\section{\bf The constraint zero defect quantum electric charge $\widehat{(YM)}[i]_{\blacksquare}$}\label{sec-constraint-zero-defect-quantum-electric-charge}

In this third part we assume as prerequisite the knowledge of the previous two parts \cite{PRAS28, PRAS29}.
 In the following theorem we identify a constraint $\widehat{(YM)}[i]_{\blacksquare}$ that must satisfy nonlinear quantum propagators in order
 should be with zero defect quantum electric-charge. Furthermore a sub-equation $\widetilde{\widehat{(YM)}[i]_{\bullet}}\subset \widehat{(YM)}[i]$ is identified where live nonlinear quantum propagators without mass-gap. Some interesting applications are also considered that give a new mathematical support and justification to recent exotic experimental results like the so-called $Zc(3900)$ exotic decay.\cite{ABLIKIM, LIU}

\begin{theorem}[Conditions for zero defect quantum electric charge]\label{zero-defect-quantum-electric-charge}
Let $\widehat{(YM)}[i]$ be the quantum super Yang-Mills PDE, observed by a quantum relativistic frame $i:N\to M$. Assuming that the fundamental quantum superalgebra $A$ of $\widehat{(YM)}$ has a Noetherian center, then there exists a formally integrable and completely integrable quantum super PDE, $\widetilde{\widehat{(YM)}[i]_{\bullet}}\subset \widehat{(YM)}[i]$, {\em(zero-defect-quantum-electric-charge PDE)}, where nonlinear quantum propagators $V$, without mass-gap, have $\mathfrak{Q}[V]=0$.

One has the following fiber bundle structure $\widetilde{\widehat{(YM)}[i]_{\bullet}}\to N$.

\end{theorem}

\begin{proof}
Let us recall that if $\psi\equiv(i:N\to M;\phi)$ is a quantum relativistic frame, observing the quantum super Yang-Mills PDE $\widehat{(YM)}\subset \hat J{\it D}^2(W)$, then it is identified a quantum super PDE $\widehat{(YM)}[i]\subset \hat J{\it D}^2(E[i])$, where $E[i]$ is the bundle over $N$ defined in the following commutative diagram.

\begin{equation}
\xymatrix{\mathfrak{g}\bigotimes_{\mathbb{K}}T^*N\cong E[i]\equiv
Hom_Z(TN;\mathfrak{g})\ar[d]&i^*W\equiv Hom_Z(TM;\mathfrak{g})|_{i(N)}\ar[l]^(.43){\mathbb{B}(i)}\ar[d]\\
N\ar[r]_{i}&i(N)\subset M\\}
\end{equation}
More precisely one has the following commutative diagram relating $\widehat{(YM)}[i]$ with $\widehat{(YM)}$.

\begin{equation}
\xymatrix{0&\widehat{(YM)}[i]\ar[l]\ar@{^{(}->}[d]&i^*\widehat{(YM)}\ar[l]_{\mathbb{B}(i)_*}\ar@{^{(}->}[d]\ar[r]^{i_*}&\widehat{(YM)}\ar@{^{(}->}[d]\\
0&J\hat{\it D}^2(E[i])\ar[l]\ar[d]&i^*J\hat{\it D}^2(W)
\ar[l]_{\mathbb{B}(i)_*}\ar[d]\ar[r]^{i_*}&J\hat{\it D}^2(W)\ar[d]\\
0&E[i]\ar[l]\ar[d]&i^*W
\ar[l]_{\mathbb{B}(i)}\ar[d]\ar[r]^{i_*}&W\ar[d]_{\hat\pi}\\
&N\ar @{=}[r]&N\ar[r]_{i}&M\\}
\end{equation}
where $\mathbb{B}(i)$ is a canonical epimorphism over $N$ and
$\mathbb{B}(i)_*$ is the induced epimorphism on jet spaces.
Furthermore, since $\widehat{(YM)}$ is formally quantum (super) integrable,
(resp. completely quantum (super) integrable), so is $\widehat{(YM)}[i]$.

The dynamic equation $\widehat{(YM)}$ and $\widehat{(YM)}[i]$ are resumed in Tab. \ref{local-expression-yang-mills-and-bianchi-identity}.

\begin{table}[h]
\caption{Local expression of
${\widehat{(YM)}\subset J\hat D^2(W)}$, $\widehat{(YM)}[i]$ and quantum Bianchi identities.}
\label{local-expression-yang-mills-and-bianchi-identity}
\begin{tabular}{|l|c|}
\hline
{\footnotesize\rm(Field equations)\hskip 1cm $E^A_{K}\equiv-(\partial_{B}.\hat R^{BA}_K)+[\widehat
C^H_{KR}\hat\mu^R_{C},\hat R^{[AC]}_H]_+=0$}&{\footnotesize\rm$\widehat{(YM)}$}\\
\hline \hline
\hfil{\footnotesize\rm $\hat R^K_{A_1A_2}=\left[(\partial X_{
A_1}.\hat\mu^K_{A_2})+\frac{1}{2}\widehat{C}{}^K_{IJ}[\hat\mu^I_{A_1},\hat\mu^J_{A_2}]_+\right]$}\hfil&{\footnotesize\rm(Fields)}\\
\hline
 {\footnotesize\rm(Bianchi identities)\hskip 0.25cm $B^K_{HA_1A_2}\equiv(\partial X_{H}.\hat
R^K_{A_1A_2})+\frac{1}{2} \widehat{C}{}^K_{IJ}[\bar\mu^I_{H},\hat R^J_{A_1A_2}]_+=0$}&{\footnotesize\rm$(B)$}\\
\hline
\end{tabular}
\scalebox{0.939}{$\begin{tabular}{|l|c|}
\hline
{\footnotesize\rm(Observed Field Equations)\hskip 0.5cm$(\partial_{\alpha}.\tilde R^{K\alpha\beta})+[\widehat
C^K_{IJ}\tilde\mu^I_{\alpha},\tilde R^{J\alpha\beta}]_+
=0$}&{\footnotesize $\widehat{(YM)}[i]$} \\
\hline
\hfil{\footnotesize\rm $\tilde
R^K_{\alpha_1\alpha_2}=(\partial\xi_{[\alpha_1}.\tilde\mu^K_{\alpha_2]})+
\frac{1}{2}\widehat{C}{}^K_{IJ}\tilde\mu^I_{[\alpha_2}\tilde\mu^J_{\alpha_1]}$}\hfil&{\footnotesize\rm(Observed Fields)}\\
\hline
{\footnotesize\rm(Observed Bianchi Identities)\hskip 2pt$(\partial
\xi_{[\gamma}.\tilde R^K_{\alpha_1\alpha_2]})+\frac{1}{2}
\widehat{C}{}^K_{IJ}\tilde\mu^I_{[\gamma}\tilde
R^J_{\alpha_1\alpha_2]}=0$}&{\footnotesize$(B)[i]$}\\
\hline
\multicolumn {2}{l}{\footnotesize\rm$\hat R^K_{A_1A_2}:\Omega_1\subset J\hat D(W)\to\mathop{\widehat{A}}\limits^2;\quad
B^K_{HA_1A_2}:\Omega_2\subset J\hat D^2(W)\to\mathop{\widehat{A}}\limits^3;\quad E^A_{K}:\Omega_2\subset
J\hat D^2(W)\to\mathop{\widehat{A}}\limits^{3}.$}\\ \end{tabular}
$}\end{table}
These equations are also formally integrable and completely integrable. (For details see \cite{PRAS11,PRAS14,PRAS21,PRAS22}.)
In the following we recall a theorem characterizing $Q$-exoticity of nonlinear quantum propagators. (See Theorem 4.10 in \cite{PRAS29}.)
\begin{theorem}[$Q$-exotic nonlinear quantum propagators of ${\widehat{(YM)}[i]}$ \cite{PRAS29}\label{non-conservation-of-quantum-electric-charge}]
For any observed nonlinear quantum propagator $V$ of $\widehat{(YM)}[i]$, such that $\partial V=N_0\sqcup P\sqcup N_1$, where $N_i$, $i=0,1$, are $3$-dimensional space-like admissible Cauchy data of $\widehat{(YM)}[i]$, and $P$ is a suitable time-like $3$-dimensional integral manifold with $\partial P=\partial N_0\sqcup\partial N_1$, the following equation holds.
$\hat Q[i|t_0]=\hat Q[i|t_1] \, {\rm mod}\hskip 3pt \mathfrak{Q}[V]\in A$,
 where $\hat Q[i|t_r]\in A$ is the quantum electric charge on $N_r$, $r=0,1$, and $\mathfrak{Q}[V]\in A$, is a term that in general is not zero and that we call {\em defect quantum electric-charge}.
We call {\em $Q$-exotic nonlinear quantum propagators}, nonlinear quantum propagators such that $\mathfrak{Q}[V]\not=0\in A$.\footnote{This agrees with the conservation of the observed quantum Hamiltonian. See Theorem 3.20 in \cite{PRAS28}. In fact, the observed quantum electromagnetic energy is a form of quantum energy, even if, in general, it does not coincide with the observed quantum Hamiltonian.}
\end{theorem}

Let us now consider that ${\widehat{(YM)}[i]}$ is invariant for gauge transformations. From this invariance we have the conservation law reported in (\ref{conservation-law-gauge-invariance}).
\begin{equation}\label{conservation-law-gauge-invariance}
\scalebox{0.8}{$\left\{
\begin{array}{ll}
\omega_q&=(\partial\widetilde{\mu}^{\alpha 0}_{K}.L)\, \Xi^K_\alpha\,\otimes d\xi^1\wedge  d\xi^2\wedge  d\xi^3\\
&+\sum_{1\le j\le 3}(-1)^{j}(\partial\widetilde{\mu}^{\alpha j}_{K}.L)\,\otimes \Xi^K_\alpha\, d\xi^0\wedge  d\xi^1\wedge\cdots\wedge \widetilde{d\xi^j}\wedge\cdots\wedge  d\xi^3\\
\end{array}
\right.$}
\end{equation}

where $\widetilde{d\xi^j}$ means absent. ($\Xi^K_\alpha$ are the gauge parameters.) The conservation law $\omega_q$ is of the same type of ones considered in \cite{PRAS28}, hence it is closed on the solutions of ${\widehat{(YM)}[i]}$. In particular if $V$ is a nonlinear quantum propagator such that $\partial V=N_0\sqcup P \sqcup N_1$, where $N_0$ is a initial Cauchy data representing a set of incident particles in the quantum reaction, and $N_1$ is a final Cauchy data representing a set of outgoing particles in the quantum reaction. Both $N_0$ and $N_1$ are considered space-like, and identifying some time $t_0$ and $t_1$, of the proper time of the quantum relativistic frame. Instead $P$ is considered a $3$-dimensional time-like manifold, such that $\partial P=\partial N_0\sqcup\partial N_1$. Then one has the following equation:
$0=<d\omega_q,V>=<\omega_q,\partial V>=<\omega_q,N_0>-<\omega_q,N_1>+<\omega_q,P>$.
In particular one has
\begin{equation}\label{conservation-law-gauge-invariance-b}
\scalebox{0.8}{$\left\{
\begin{array}{ll}
{Q[i|t_0]}&{\equiv <\omega_q,N_0>=<(\partial\widetilde{\mu}^{\alpha 0}_{K}.L)\Xi^K_\alpha\,\otimes  d\xi^1\wedge  d\xi^2\wedge  d\xi^3,N_0>\in A}\\
{Q[i|t_1]}&{\equiv <\omega_q,N_1>=<(\partial\widetilde{\mu}^{\alpha 0}_{K}.L)\Xi^K_\alpha\,\otimes  d\xi^1\wedge  d\xi^2\wedge  d\xi^3,N_1>\in A}\\
{\mathfrak{Q}[V]}&{\equiv <\omega_q,P>}
{=<\sum_{1\le j\le 3}(-1)^j(\partial\widetilde{\mu}^{\alpha j}_{K}.L)\Xi^K_\alpha\,\otimes  d\xi^0\wedge  d\xi^1\wedge\cdots\wedge \widetilde{d\xi^j}\wedge\cdots\wedge  d\xi^3,P>}\\
&+<(\partial\widetilde{\mu}^{\alpha 0}_{K}.L)\Xi^K_\alpha\, \xi^3_0\,\otimes  d\xi^0\wedge  d\xi^1\wedge d\xi^2,P>\in A.\\
\end{array}\right.$}
\end{equation}
By a direct calculation one can see that

\begin{equation}\label{conservation-law-gauge-invariance-c}
  \omega_q|_P=[(\partial\widetilde{\mu}^{\alpha 0}_{K}.L)\xi^3_0+(\partial\widetilde{\mu}^{\alpha 1}_{K}.L)\xi^3_1+(\partial\widetilde{\mu}^{\alpha 2}_{K}.L)\xi^3_2-(\partial\widetilde{\mu}^{\alpha 3}_{K}.L)]\Xi^K_\alpha \otimes d\xi^0\wedge d\xi^1\wedge d\xi^2.
\end{equation}
Therefore the condition that $V$ has zero defect quantum electric-charge is the following:

 \begin{equation}\label{conservation-law-gauge-invariance-cc}
  [(\partial\widetilde{\mu}^{\alpha 0}_{K}.L)\xi^3_0+(\partial\widetilde{\mu}^{\alpha 1}_{K}.L)\xi^3_1+(\partial\widetilde{\mu}^{\alpha 2}_{K}.L)\xi^3_2-(\partial\widetilde{\mu}^{\alpha 3}_{K}.L)]\Xi^K_\alpha =0.
\end{equation}

Furthermore one can see that $(\partial\widetilde{\mu}^{\alpha 0}_{K}.L)=\widetilde{R}^{0\alpha}_{K}$ and
$(\partial\widetilde{\mu}^{\alpha j}_{K}.L)=\widetilde{R}^{j\alpha}_K$. (For details see \cite{PRAS11,PRAS14}.)

Let us require also that equations (\ref{conservation-law-gauge-invariance-cc}) should be satisfied for any $\Xi^K_\alpha$, i.e., for any gauge transformation. Then we identify the constraint equation $\widehat{(YM)}[i]_\blacksquare $, given in (\ref{sub-equation-zero-defect-quantum-electric-charge-a}).

\begin{equation}\label{sub-equation-zero-defect-quantum-electric-charge-a}
\scalebox{0.8}{$\widehat{(YM)}[i]_\blacksquare:\hskip 2pt\left\{
\begin{array}{l}
\widetilde{R}^{\alpha 0}_{K}\xi^3_0+\widetilde{R}^{\alpha 1}_{K}\xi^3_1+\widetilde{R}^{\alpha 2}_{K}\xi^3_2-\widetilde{R}^{\alpha 3}_{K}=0\\
(\partial_{\alpha}.\tilde R^{K\alpha \beta})+[\widehat
C^K_{IJ}\tilde\mu^I_{\alpha},\tilde R^{J\alpha\beta}]_+=0\\
\end{array}
\right\}.$}
\end{equation}
One can interpret the first equation in (\ref{sub-equation-zero-defect-quantum-electric-charge-a}) a condition on the functions $\xi^3=\xi^3(\xi^1,\xi^2,\xi^0)$ characterizing the manifold $P$ in order that ${\mathfrak{Q}[V]}=0$. Equation (\ref{sub-equation-zero-defect-quantum-electric-charge-a}) is surely verified when $\widetilde{R}^{\beta\alpha}_{K}=0$, namely in the sub-equation

\begin{equation}\label{sub-equation-zero-defect-quantum-electric-charge}
\scalebox{0.8}{$\widehat{(YM)}[i]_\bullet\subset \widehat{(YM)}[i]:\hskip 2pt\left\{
\begin{array}{l}
\widetilde{R}^{\beta\alpha}_K=0\\
{(\partial_{\alpha}.\tilde R^{K\alpha \beta})+[\widehat
C^K_{IJ}\tilde\mu^I_{\alpha},\tilde R^{J\alpha\beta}]_+=0}\\
\end{array}
\right\}.$}
\end{equation}
One can prove that, under the condition that the quantum superalgebra $B$, underling these quantum supermanifolds, has Noetherian center, $Z$, $\widehat{(YM)}[i]_\bullet$ canonically identifies a formally integral and completely integrable quantum super PDE, $\widetilde{\widehat{(YM)}[i]_\bullet}\subset \widehat{(YM)}[i]$ too. The proof follows some geometric approaches previously formulated by A. Pr\'astaro for PDEs in the category of manifolds and quantum (super)manifolds, and that consists to use enough prolongations of original equations, in order to obtain formally integrable equations having the same solutions of $\widehat{(YM)}[i]_\bullet$.\footnote{Applications of these methods to the Navier-Stokes PDEs and MHD-PDEs were first obtained by A.Pr\'astaro to encode complex dynamics in regular equations.}  Therefore, for any point $u\in \widetilde{\widehat{(YM)}[i]_\bullet}$ passes a solution, that since $\widetilde{\widehat{(YM)}[i]_\bullet}$ is of class $Q_w^\omega$, it can be fixed of class $Q_w^\omega$ too. All nonlinear quantum propagators $V$ in $\widetilde{\widehat{(YM)}[i]_\bullet}$ have ${\mathfrak{Q}[V]}=0$. Let us go in some detail of this proof, since it is necessary to use some technicality. Let us first note that $\widehat{(YM)}[i]$  is a formally integrable and completely integrable quantum super PDE. In fact, let us denote  $B=\mathbb{R}\times A$, the quantum superalgebra, model of the quantum supermanifold $\hat J{\it D}^2(W[i])$, with $A$ the quantum superalgebra such that $\dim_A\mathfrak{g}=(r|s)$. Then one has $\dim_B\hat J{\it D}^2(W[i])=(4,(r|s)60)$, $\dim_B\hat J{\it D}^3(W[i])=(4,(r|s)140)$, $\dim_B\widehat{(YM)}[i]=(4,(r|s)56)$ and $\dim_B(\widehat{(YM)}[i])_{+1}=(4,(r|s)120)$, where $(\widehat{(YM)}[i])_{+1}$ denotes the first prolongation of $\widehat{(YM)}[i]$. The we have the surjection of the canonical mapping $\pi_{3,2}:(\widehat{(YM)}[i])_{+1}\to \widehat{(YM)}[i]$. In fact, one has: $
\dim_B(\widehat{(YM)}[i])_{+1}=\dim_B\widehat{(YM)}[i]+\dim_B(\widehat{g}[i])_{+1}$,
where $(\widehat{g}[i])_{+1}$ is the symbol of $(\widehat{(YM)}[i])_{+1}$, and $\dim_B(\widehat{g}[i])_{+1}=(0,(r|s)64)$. This property can be extended for iteration to any order of prolongation $h\ge0$. Then, taking into account that $B$ has Noetherian center, we can state that $\widehat{(YM)}[i]$ is regular and $\delta$-regular in the sense specified in \cite{PRAS11,PRAS14}. This is enough to state that $\widehat{(YM)}[i]$ is formally integrable, and since it is of class $Q^\omega_w$, it follows that it is also completely integrable. By following a similar road to look to the formal integrability of $\widehat{(YM)}[i]_\bullet$, we arrive to prove that it is not formally integrable, since there is not the surjectivity between the first prolongation $(\widehat{(YM)}[i]_\bullet)_{+1}$ and $\widehat{(YM)}[i]_\bullet$. On the other hand, by following our geometric theory of quantum (super), we can identify another equation, say $\widetilde{\widehat{(YM)}[i]_\bullet}\subset \hat J{\it D}^2(W[i])$, that is formally integrable and completely integrable and that has the same solutions of $\widehat{(YM)}[i]_\bullet$. This equation is reported in (\ref{formally-integrable-pde-associated-to-zero-defect-quantum-electric-charge}).
\begin{equation}\label{formally-integrable-pde-associated-to-zero-defect-quantum-electric-charge}
 \scalebox{0.8}{$ \widetilde{\widehat{(YM)}[i]_\bullet}\subset \hat J{\it D}^2(W[i]):\hskip 2pt \left\{
   \begin{array}{l}
     (\partial_{\alpha}.\widetilde{R}^{K\alpha\beta})+[\widehat
C^K_{IJ}\widetilde{\mu}^I_{\alpha},\widetilde{R}^{J\alpha\beta}]_+=0 \\
    \widetilde{R}^{\beta\alpha}_K=0\\
    \widetilde{R}^{\beta\alpha}_{K,\gamma}=0\\
   \end{array}
   \right.$}
\end{equation}
Then one can see that the canonical mapping $(\widetilde{\widehat{(YM)}[i]_\bullet})_{+1}\to \widetilde{\widehat{(YM)}[i]_\bullet}$ is surjective. In fact one has the equation reported in (\ref{condition-formally-integrable-pde-associated-to-zero-defect-quantum-electric-charge}).
\begin{equation}\label{condition-formally-integrable-pde-associated-to-zero-defect-quantum-electric-charge}
\scalebox{0.8}{$\left\{
\begin{array}{ll}
   [\dim_B(\widetilde{\widehat{(YM)}[i]_\bullet})_{+1}=(4,(r|s)30)]&=[\dim_B\widetilde{\widehat{(YM)}[i]_\bullet}=(4,(r|s)26)]\\
   &+[\dim_B(\widetilde{\widehat{g_2}[i]_\bullet})_{+1}=(0,(r|s)4)].\\
\end{array}
\right.$}
\end{equation}
This can be generalized for iteration to any order of prolongation. Therefore, $\widetilde{\widehat{(YM)}[i]_\bullet}$ is regular and $\delta$-regular, hence formally integrable. Since it is also of class $Q^\omega_w$, we conclude that $\widetilde{\widehat{(YM)}[i]_\bullet}$ is a completely integrable quantum super PDE.

The fiber bundle structure of $\widetilde{\widehat{(YM)}[i]_\bullet}$ over $N$ follows from the following commutative and exact diagram.
$$\xymatrix{0\ar[r]&\widetilde{\widehat{(YM)}[i]_\bullet}\ar[d]\ar@{^{(}->}[r]&\widehat{(YM)}[i]\ar[d]\\
&N\ar[d]\ar@{=}[r]&N\ar[d]\\
&0&0\\}$$

We can see that exoticity of nonlinear quantum propagators $V$, $\partial V=N_0\bigcup P\bigcup N_1$, $\partial P=\partial N_0\bigcup \partial N_1$, disappears when the manifold $P$ has an orientable smooth structure. In fact, we have the following lemma.

\begin{lemma}[Criterion for non-exoticity]\label{criterion-for-non-exoticity}
A nonlinear quantum propagator $V$, $\partial V=N_0\bigcup P\bigcup N_1$, $\partial P=\partial N_0\bigcup \partial N_1$, of $\widehat{(YM)}[i]$ such that $P$ is a $3$-dimensional time-like smooth orientable manifold, has zero defect quantum electric-charge: $\mathfrak{Q}[V]=0$.
\end{lemma}

\begin{proof}
In fact taking into account equation (\ref{conservation-law-gauge-invariance-c}) we get
\begin{equation}\label{conservation-law-gauge-invariance-cca}
  \omega_q|_P=[\widetilde{R}^{\alpha 0}_{K}\xi^3_0+\widetilde{R}^{\alpha 1}_{K}\xi^3_1+\widetilde{R}^{\alpha 2}_{K}\xi^3_2-\widetilde{R}^{\alpha 3}_{K}]\Xi^K_\alpha \otimes d\xi^0\wedge d\xi^1\wedge d\xi^2.
\end{equation}
On the other hand taking into account that $P$ is a smooth orientable manifold, we can consider that the condition $x^3=x^3(x^1,x^2,x^4)$ holds in some neighbourhood of any of its points, (up to a change of independent coordinates). Then considered a suitable finite open covering $\{U_i\}$ of $P$ one has on each open set $U_i$,
\begin{equation}\label{conserved-electric-charge-a}
  \widetilde{R}^{\alpha 3}_{K}=\widetilde{R}^{\alpha 0}_{K}\xi^3_0+\widetilde{R}^{\alpha 1}_{K}\xi^3_1+\widetilde{R}^{\alpha 2}_{K}\xi^3_2.
\end{equation}
Thus $\omega_q|_{U_i}=0$. Since we can write $\omega_q|_P=\sum_i\alpha_i\omega_q|_{U_i}$, with $\{\alpha_i\}$ a finite partition of unity subordinated to the covering $\{U_i\}$, we conclude that $\omega_q|_P=0$. Therefore, when $P$ is a smooth orientable manifold, one has that $\mathfrak{Q}[V]=0$.
\end{proof}

From above results it follows that a nonlinear quantum propagator $V$ of $\widehat{(YM)}[i]_\bullet$, $\partial V=N_0\bigcup P\bigcup N_1$, $\partial P=\partial N_0\bigcup\partial N_1$, does not necessitate to have $P$ with a smooth orientable manifold structure. (See also relation between formal integrability and integral bordism groups in \cite{PRAS11,PRAS14,PRAS21,PRAS22}.) In conclusion for a general nonlinear quantum propagator $V$, $\mathfrak{Q}[V]$ does not necessitate to be zero.
\end{proof}

\begin{cor}
Nonlinear quantum propagators $V\subset\widehat{(YM)}[i]$, such that $V\not\subset\widetilde{\widehat{(YM)}[i]_\bullet}$, can have ${\mathfrak{Q}[V]}\not=0$, hence they can be $Q$-exotic nonlinear quantum propagators.
\end{cor}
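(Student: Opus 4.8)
The plan is to read the corollary as the logical complement of Theorem~\ref{zero-lost-quantum-electric-charge}, exploiting the explicit formula for the lost quantum electric-charge obtained in its proof. Recall from (\ref{conservation-law-gauge-invariance-b}) that for a quantum nonlinear propagator $V$ with $\partial V=N_0\sqcup P\sqcup N_1$ one has $\mathfrak{Q}[V]=<\omega_q,P>$, whose integrand is assembled from the components $(\partial\widetilde{\mu}^{\alpha j}_{K}.L)=\widetilde{R}^{j\alpha}_K$, while $\widetilde{\widehat{(YM)}[i]_\bullet}$ is precisely the formally integrable locus cut out by $\widetilde{R}^{j\alpha}_K=0$ together with its prolongation $\widetilde{R}^{j\alpha}_{K,\gamma}=0$, as displayed in (\ref{formally-integrable-pde-associated-to-zero-lost-quantum-electric-charge}). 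Thus the whole argument reduces to comparing the support of this integrand with membership in the constraint.

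First I would establish the auxiliary forward implication. If $V\subset\widetilde{\widehat{(YM)}[i]_\bullet}$, then $\widetilde{R}^{j\alpha}_K$ vanishes identically on $V$, in particular on the time-like piece $P\subset\partial V$; the integrand of $<\omega_q,P>$ is therefore zero and $\mathfrak{Q}[V]=0$. Contrapositively, $\mathfrak{Q}[V]\not=0$ forces $V\not\subset\widetilde{\widehat{(YM)}[i]_\bullet}$. It then remains to produce at least one propagator lying outside the constraint whose lost charge is genuinely non-zero, so that the word \emph{can} in the statement is justified.

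For this existence half I would reuse the computation (\ref{calculation}) appearing at the end of the proof of Theorem~\ref{zero-lost-quantum-electric-charge}. Choose a smooth quantum nonlinear bordism of cylindrical type $V\cong a\times I$, so that $P\cong\partial a\times I$ and the two Cauchy data are related by a diffeomorphism $f\colon a\to b$. Then
\begin{equation*}
-\mathfrak{Q}[V]=<\omega_q,a>-<\omega_q,b>=<(1-\det(\mathcal{J}(f)))\,\omega_q,a>,
\end{equation*}
and selecting $f$ to be non-rigid, i.e. $\det(\mathcal{J}(f))\not=1$, makes the right-hand side non-zero. Since the complete integrability of $\widehat{(YM)}[i]$ (proved in Theorem~\ref{zero-lost-quantum-electric-charge}) guarantees that such a bordism is realized by an actual solution, this yields a $V$ with $\mathfrak{Q}[V]\not=0$; by the forward implication it necessarily satisfies $V\not\subset\widetilde{\widehat{(YM)}[i]_\bullet}$ and is therefore $Q$-exotic.

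The step I expect to be delicate is the final non-degeneracy check: one must ensure that the scaling factor $(1-\det(\mathcal{J}(f)))$ does not conspire with the conservation form $\omega_q$ to integrate to zero over $a$, and that a non-rigid $f$ can be chosen compatibly with the admissibility of the Cauchy data and with the boundary condition $\partial P=\partial N_0\sqcup\partial N_1$. Controlling this interplay between the volume distortion of $f$ and the support of $\omega_q$---rather than the purely formal algebra, which descends directly from the main theorem---is where the real work lies.
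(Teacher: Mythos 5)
Your argument is correct and follows essentially the same route as the paper, which states this corollary without a separate proof precisely because it is the immediate complement of Theorem~\ref{zero-lost-quantum-electric-charge}: the vanishing of the integrand $\widetilde{R}^{j\alpha}_K\,\Xi^K_\alpha$ on the constraint gives $\mathfrak{Q}[V]=0$ there, and the cylinder computation (\ref{calculation}) with a non-rigid diffeomorphism $f$ supplies the required example with $\mathfrak{Q}[V]\not=0$. Your closing caveat about the factor $(1-\det(\mathcal{J}(f)))$ possibly integrating to zero against $\omega_q$ is a point the paper itself does not address, so you are if anything slightly more careful than the source.
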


\begin{example}
A particular important case where ${\mathfrak{Q}[V]}=0$, is when $\widetilde{R}^K_{\alpha\beta}=0$, i.e., $V$ is a quantum flat nonlinear propagator. For such a case $V$ cannot have mass-gap, (Corollary 2.5 in \cite{PRAS11} or Corollary 5.2 in \cite{PRAS14} and Theorem 3.19 in \cite{PRAS22}), and neither quantum electric-gap. Therefore, such a $V$ is necessarily contained in $\widetilde{\widehat{(YM)}[i]_\bullet}$, and $V\bigcap\widehat{(Higgs)}=\varnothing$, $V\bigcap\widehat{(YM)}[i]_w=\varnothing$, where $\widehat{(YM)}[i]_w\subset \widehat{(YM)}[i]$ is the {\em quantum electromagnetic-Higgs PDE}. (See part II.) Such a quantum propagator can be one encoding photons, gluons or non-massive neutrinos.
\end{example}

\begin{example}
Let us consider a massive, electric-charged particle $a\subset\widehat{(YM)}[i]$ that is in a steady-state. Its nonlinear quantum propagator $V\subset \widehat{(YM)}[i]$ is such that $\partial V=a\sqcup P \sqcup a$, with $P\cong \partial a\times I$. Then if $a$ is a smooth orientable $3$-dimensional manifold, so is $P$ and from Lemma \ref{criterion-for-non-exoticity} we can state that now $\mathfrak{Q}[V]=0$. This means that $V\subset \widehat{(Higgs)}$, but also that $V$ satisfies the constraint $\widehat{(YM)}[i]_\blacksquare$, given in (\ref{sub-equation-zero-defect-quantum-electric-charge-a}). This condition holds for all massive particles in steady-state, and agrees with the fact that in steady-state configurations the observed phenomenological quantum Hamiltonian $H[i|t]$ is constant. (Example 3.24 in \cite{PRAS28}.) This does not contradict what we know that nonlinear quantum propagators for massive particles cannot be quantum flat. In fact, this means only that nonlinear quantum propagators for massive particles, hence contained inside $\widehat{(Higgs)}[i]$, can conserve electric charge, even if they are not quantum flat. In other words the quantum flatness is a condition sufficient but not necessary to have ${\mathfrak{Q}[V]}=0$. By conclusion a quantum massive particle in steady-state, conserves its quantum electric charge despite its nonlinear quantum propagator is not quantum flat.

\end{example}
\begin{figure}[h]
\includegraphics[width=4cm]{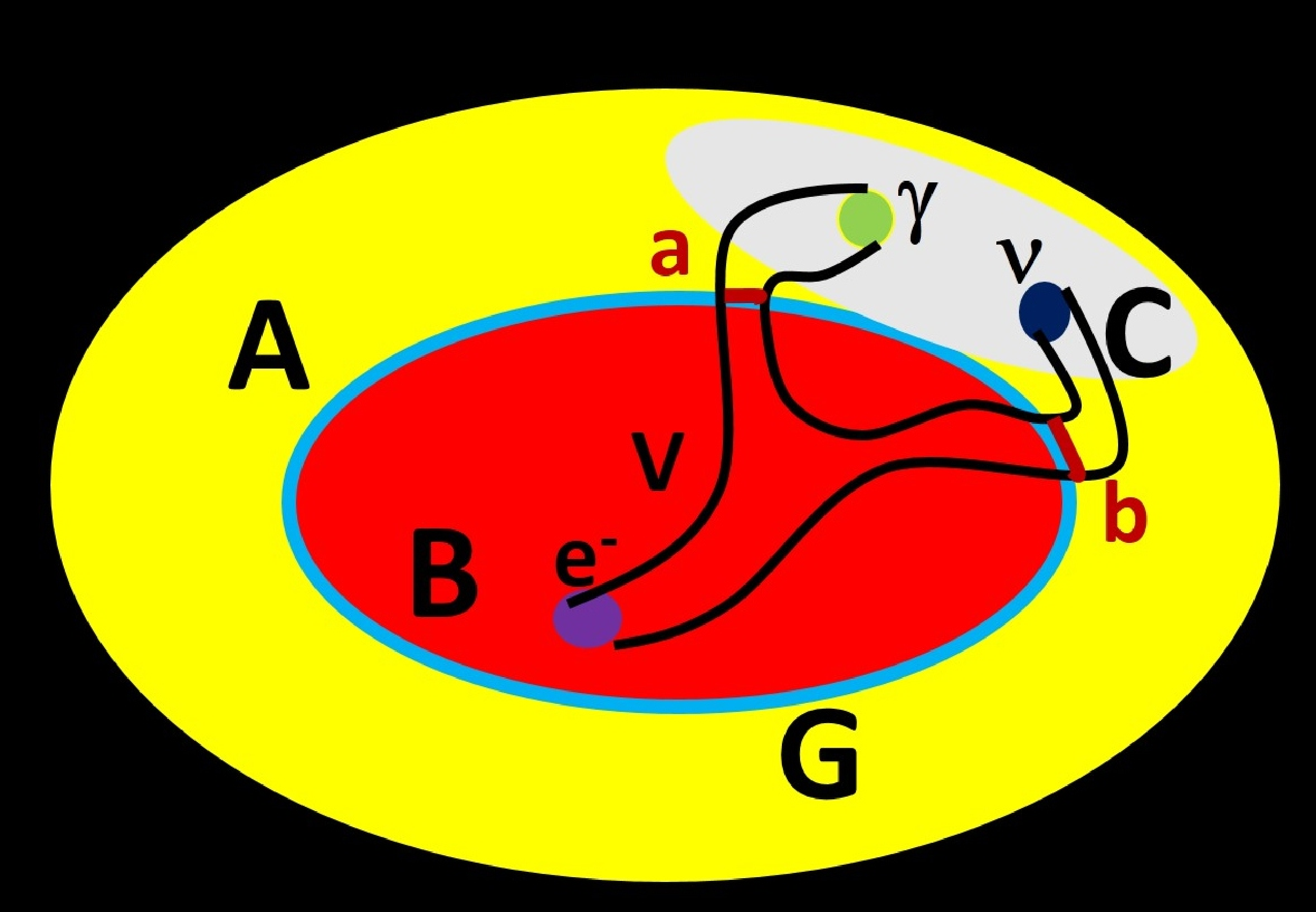}
\renewcommand{\figurename}{Fig.}
\caption{$Q$-exotic electron decay: $e^-\to \gamma+\nu$. There $A$, $B$ and $C$ represent respectively $\widehat{(YM)}[i]$, $\widehat{(Higgs)}[i]$ and $\widetilde{\widehat{(YM)}[i]_\bullet}$. $V$ is the $Q$-exotic nonlinear quantum propagator bording $e^-$ with $\gamma\bigcup \nu$. Therefore $V$ is not globally contained in $\widetilde{\widehat{(YM)}[i]_\bullet}$ and neither satisfies the constraint $\widehat{(YM)}[i]_\blacksquare$. Furthermore, since $V$ bords the massive electron, contained into $\widehat{(Higgs)}[i]$, with the non-massive particles $\gamma$ and $\nu$, it must necessarily cross the Goldstone boundary, $G$, of $\widehat{(Higgs)}[i]$. This has the effect to produce two virtual massive particles, denoted in the picture by the letters $a$ and $b$.}
\label{figure-q-exotic-electron-decay}
\end{figure}

\begin{example}
An exotic nonlinear quantum propagator $V$ that encodes the decay of electron $e^-\to \gamma+\nu$ is one that bords an initial Cauchy $e^-\subset \widehat{(Higgs)}[i]\subset\widehat{(YM)}[i]$ with a Cauchy data $\gamma\bigcup\nu\subset \widetilde{\widehat{(YM)}[i]_\bullet}$. Since $\mathfrak{Q}[V]\not=0$, $V$ must necessarily propagate outside $\widetilde{\widehat{(YM)}[i]_\bullet}$. Let us recall that $\gamma$ and $\nu$ cannot be contained in $\widehat{(Higgs)}[i]$, where, instead, lives electron. (See Fig. \ref{figure-q-exotic-electron-decay}.)
\end{example}
\begin{example}
A similar exotic nonlinear quantum propagator is one that encodes a $Q$-exotic neutron decay $n\to p+\nu+\bar\nu$.
\end{example}

\begin{definition}[Exotic nonlinear quantum propagators]\label{exotic-nonlinear-quantum-propagator}
We say that a nonlinear quantum propagator $V$ of $\widehat{(YM)}[i]$, $\partial V=N_0\bigcup P\bigcap N_1$, $\partial P=\partial N_0\bigcup N_1$, is {\em exotic} if $P$ has singular point, or it is not an orientable $3$-dimensional smooth manifold.
\end{definition}

In the following we resume some fundamental characteristics for exotic nonlinear quantum propagators.
\begin{theorem}[Exotic nonlinear quantum propagators properties]\label{properties-exoticnonlinear-quantum-propagators}

$\bullet$\hskip 2pt Exotic nonlinear quantum propagators of $\widehat{(YM)}[i]$ cannot be contained into $\widetilde{\widehat{(YM)}[i]_\bullet}\subset\widehat{(YM)}[i]$.

$\bullet$\hskip 2pt Exotic nonlinear quantum propagators of $\widehat{(YM)}[i]$ have non-zero defect quantum electric-charge: $\mathfrak{Q}[V]\not=0$.

$\bullet$\hskip 2pt Exotic nonlinear quantum propagators of $\widehat{(YM)}[i]$ have non-zero defect observed phenomenological quantum energy: $\mathfrak{H}[V]_{\partial}\not=0$.

$\bullet$\hskip 2pt Exotic nonlinear quantum propagators of $\widehat{(YM)}[i]$ have non-zero defect observed quantum energy: $\mathfrak{H}[V]\not=0$.

$\bullet$\hskip 2pt Exotic nonlinear quantum propagators of $\widehat{(YM)}[i]$ have non-zero defect observed quantum $r$-momentum: $\mathfrak{P}_r[V]\not=0$, $r=1,2,3$.

$\bullet$\hskip 2pt Exotic nonlinear quantum propagators of $\widehat{(YM)}[i]$ have non-zero defect observed quantum $(\mu,\nu)$-angular-momentum: $\mathfrak{M}_{\mu\nu}[V]\not=0$, $\mu,\, \nu\in\{1,2,3,4\}$.

\end{theorem}
\begin{proof}
The proof is a direct consequence of above definitions, results also contained in Part I.
\end{proof}

\begin{example}[The Quantum Exotic $Zc(3900)$ decay]\label{quantum-exotic-zc-3900-decay}
Two independent research-teams and Laboratories, Belle-Kek of Tsukuha in Japan, and BESIII-The Beijing Electron-Positron Collider of Pechino in China, have recently studied the reaction $e^+e^-\to Y\to \pi^+\pi^-J/\psi$ at $4.26$ Gev, observing the presence of an intermediate bound state about $3.9$ Gev, that have called $Zc(3900)$. The actual more accepted interpretation of such new quasi-particle $Zc(3900)$ is to consider it like a charged tetraquark meson made by $\{c,\bar c,\bar d, u\}$ or $\{c,\bar c,\bar u, d\}$, according to the sign of the charge. It is important to note that this interpretation is motivated by the fact that since the conservation of charge is a rigid law in the usual Standard Model, one cannot admit a decay e.g., $Z_c(3900)\to \pi^+J/\psi$ from a neutral charmonium $Z_c(3900)$, i.e., simply made by a couple $\{c,\bar c\}$. Thus under the necessity to conserve electric charge, physicists have guessed $Z_c(3900)$ as a charged tetraquark particle or an hadron-molecule.\cite{ABLIKIM, LIU} (For more information look, e.g., \href{http://en.wikipedia.org/wiki/Zc(3900)}{Wikipedia - $Zc(3900)$}.) For the moment this is only an ansatz, that necessitates of further experimental data to be confirmed.
On the other hand we can prove that in the framework of the Algebraic Topology of quantum super PDEs, as formulated by A.Pr\'astaro, it is possible to describe such new particle $Z_c(3900)$ like a neutral excited state of charmonium, that is more naturally related to the charmonium $Y$. The price to pay is just the non-conservation of charge ! But we have proved in Theorem \ref{non-conservation-of-quantum-electric-charge}, that processes with such characteristics exist in the quantum super Yang-Mills equation. Therefore, it is not necessary to assume that $Z_c(3900)$ has an exotic tetraquark structure ! The reaction $e^+e^-\to Y\to \pi^+Z_c(3900)\to\pi^-J/\psi$ at $4.26$ Gev, is represented in Fig. \ref{zc-3900-decay}. Here one can see that the nonlinear quantum propagator $V$, bording $N_0=e^+\sqcup e^-$ with $N_1=\pi^+\sqcup\pi^-\sqcup J/\psi$, has boundary $\partial V=N_0\bigcup P\bigcup N_1$, such that $N_0,\, N_1\subset\widehat{(Higgs)}[i]\subset\widehat{(YM)}[i]$, since all the particles involved have non-zero mass. Furthermore, $V$ admits the following decompositions: $V=V_0\bigcap V_1\bigcap V_1$, where $\mathfrak{Q}[V_0]=0$, $\mathfrak{Q}[V_1]\not=0$ and $\mathfrak{Q}[V_2]\not=0$. Therefore $V_0$ satisfies the constraint $\widehat{(YM)}[i]_{\blacksquare}$, i.e., the {\em(zero-defect-quantum-electric-charge-constraint)}, but not $V_1$ and $V_2$. These last, being massive particles, with non-zero defect quantum electric charge, cannot neither stay inside $\widetilde{\widehat{(YM)}[i]_\bullet}$. More precisely, $V$ admits the following decomposition $V=V_0\bigcup V_1\bigcup V_2$, such that $\partial V_0=N_0\bigcup P_0\bigcup Y(4260)$, where $Y(4260)$ represents a charmonium at $4.26$ Gev. Furthermore, $\partial V_1=Y(4260)\bigcup P_1\bigcup \{\pi^+\sqcup Zc(3900)\}$, where $Zc(3900)$ represents a charmonium at $3.9$ Gev, and $\partial V_2=Zc(3900)\bigcup P_2\bigcup \{\pi^-\sqcup J/\psi\}$, where $J/\psi$ is again a charmonium particle. The nonlinear quantum propagators $V_1,\, V_2$ do not respect the constraint $\widehat{(YM)}[i]_\blacksquare$. Therefore the nonlinear quantum propagator $V$, even if has a first part $V_0$, respecting the electric charge conservation, globally cannot be contained into $\widetilde{\widehat{(YM)}[i]_\bullet}$ and neither respects the constraint $\widehat{(YM)}[i]_\blacksquare$, hence represents a $Q$-exotic nonlinear quantum propagators of $\widehat{(YM)}[i]$.
\end{example}

\begin{figure}[h]
\includegraphics[width=4cm]{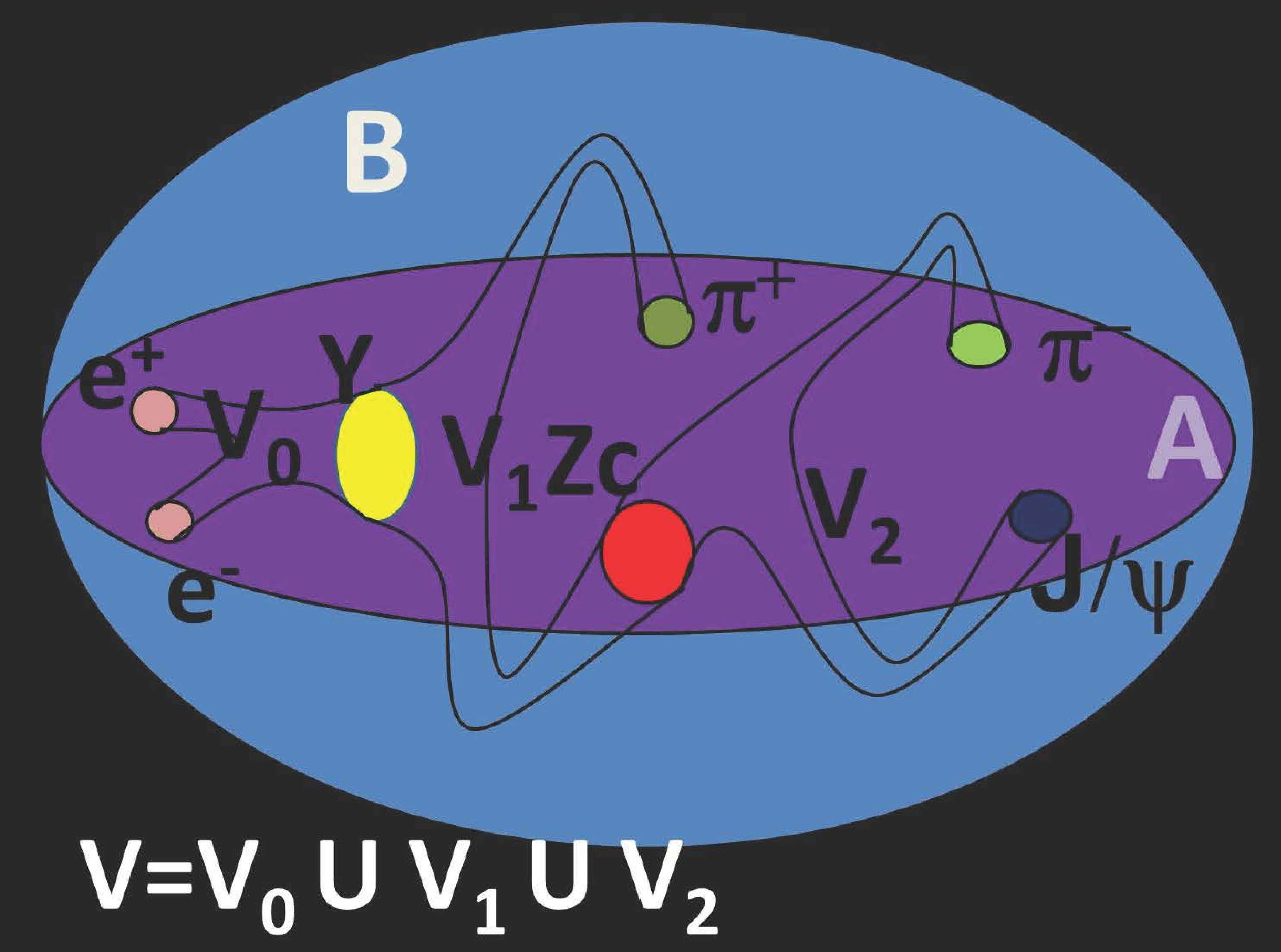}
\renewcommand{\figurename}{Fig.}
\caption{Representation of the exotic nonlinear quantum propagator $V$ encoding the reaction $e^+e^-\to \pi^+\pi^- J/\psi$ and relative splitting $V=V_0\bigcup V_1\bigcup V2$, in such a way to form intermediate neutral charmonium quasi-particles $Y(4260)$ and $Zc(3900)$. The nonlinear quantum propagator parts $V_1$ and $V_2$ do not conserve the charge. This is interpreted as a quantum supergravity effect, that it is impossible to describe remaining in the Standard Model. In the picture $A=\widehat{(YM)}[i]_\blacksquare$ and $B=\widehat{(YM)}[i]$.}
\label{zc-3900-decay}
\end{figure}

\section{\bf Quantum entanglement in nonlinear quantum propagators}\label{sec-quantum-entanglement-in-quantum-nonlinear-propagators}

In this section we shall relate nonlinear quantum propagators of quantum super Yang-Mills PDEs with the so-called {\em quantum entanglement phenomenon}. In quantum mechanics a composite system is called quantum entangled if it is encoded by global states which cannot be represented as products of the states of individual subsystems. This phenomenon has as a consequence that if a subsystem is observed, the other parts of the composite system have instantaneous reactions. This aspect were considered in contradiction with the concept of causality. In particular, quantum entanglement were considered in conflict with the Einstein's General Relativity. (See \cite{BELL, EINSTEIN-PODOLSKY-ROSEN, NEUMANN, SCHRODINGER}.) Nowadays the quantum entanglement is a well experimentally accepted and verified phenomenon. (See, e.g., \cite{BEHBOOD-ET-ALT,ZEILINGER}.) Let us emphasize that this paradox has been completely solved in the Pr\'astaro's formulation of quantum gravity in the framework of the Algebraic Topology of quantum PDE's. See \cite{PRAS19}, where the observed quantum gravity has been related to the EPR paradox. The following theorems will give a more detailed justification of the quantum entanglement on the ground of the bordism structure of nonlinear quantum propagators and quantum conservation laws as formulated by A. Pr\'astaro.

\begin{theorem}[Quantum entanglement in {$\widehat{(YM)}[i]$}]\label{quantum-entanglement-in-observed-quantum-yang-mills-pdes}
Let $V\subset \widehat{(YM)}[i]$ be an nonlinear quantum propagator. If $\partial V=N_0\sqcup P\sqcup N_1$, then $N_0$ and $N_1$ are {\em quantum entangled particles}, i.e.,
\begin{equation}\label{exotic-quantum-entangled-particles}
<N_0,\omega>=<N_1,\omega>\, {\rm mod} <\omega,P>\in A
\end{equation}

for any quantum conservation law $\omega$ of $\widehat{(YM)}[i]$.
\end{theorem}

\begin{proof}
In fact, for any quantum conservation law $\alpha$ of $\widehat{(YM)}[i]$, we get $<N_0,\alpha>-<N_1,\alpha>=<\alpha,P>\in A$, where $A$ is the fundamental quantum (super)algebra of the $\widehat{(YM)}[i]$. Then quantum super-number of $N_0$ are ``{\em instantaneously}" related to quantum super-numbers of $N_1$ by means of the quantum supernumbers of $P$.
\end{proof}

\begin{example}
In Part II are considered many examples of nonlinear quantum propagators encoding specific quantum reactions. In all those cases outgoing particles are all quantum entangled each other. For example in the annihilation electron-positron: $e^++e^-\to\gamma+\gamma$, photons are quantum entangled each other. But what is less considered, is that photons are quantum entangled to both $e^+$ and $e^-$ too. In fact, this follows from the quantum conservation laws and the structure of nonlinear quantum propagator encoding such a reaction. But this has nothing to do with the violation of causality ! It is a simple direct consequence of the noncommutative logic of the microscopic world applied to the algebraic topological structure of nonlinear quantum propagators !
\end{example}

\begin{figure}[h]
\includegraphics[width=4.3cm]{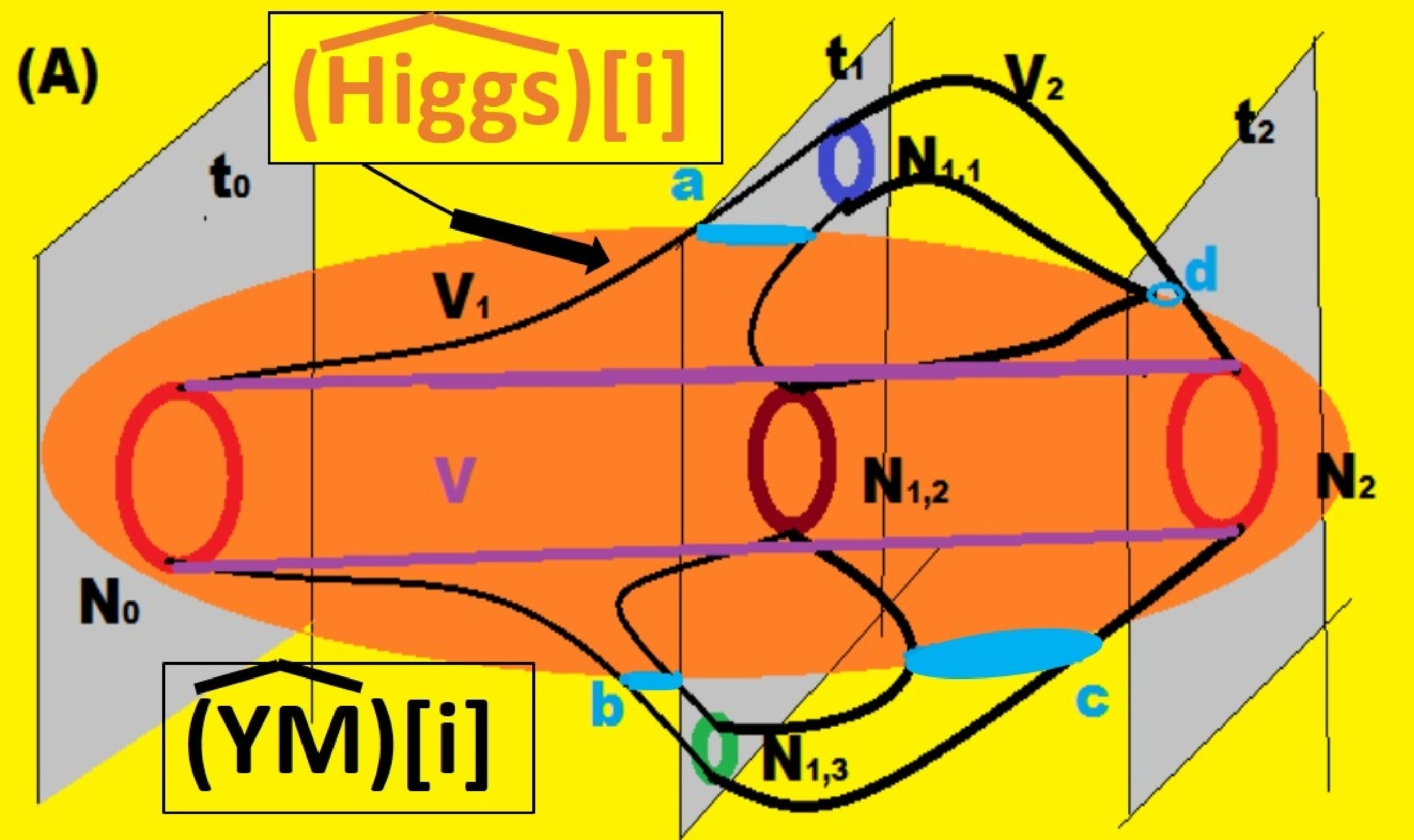}
\includegraphics[width=4cm]{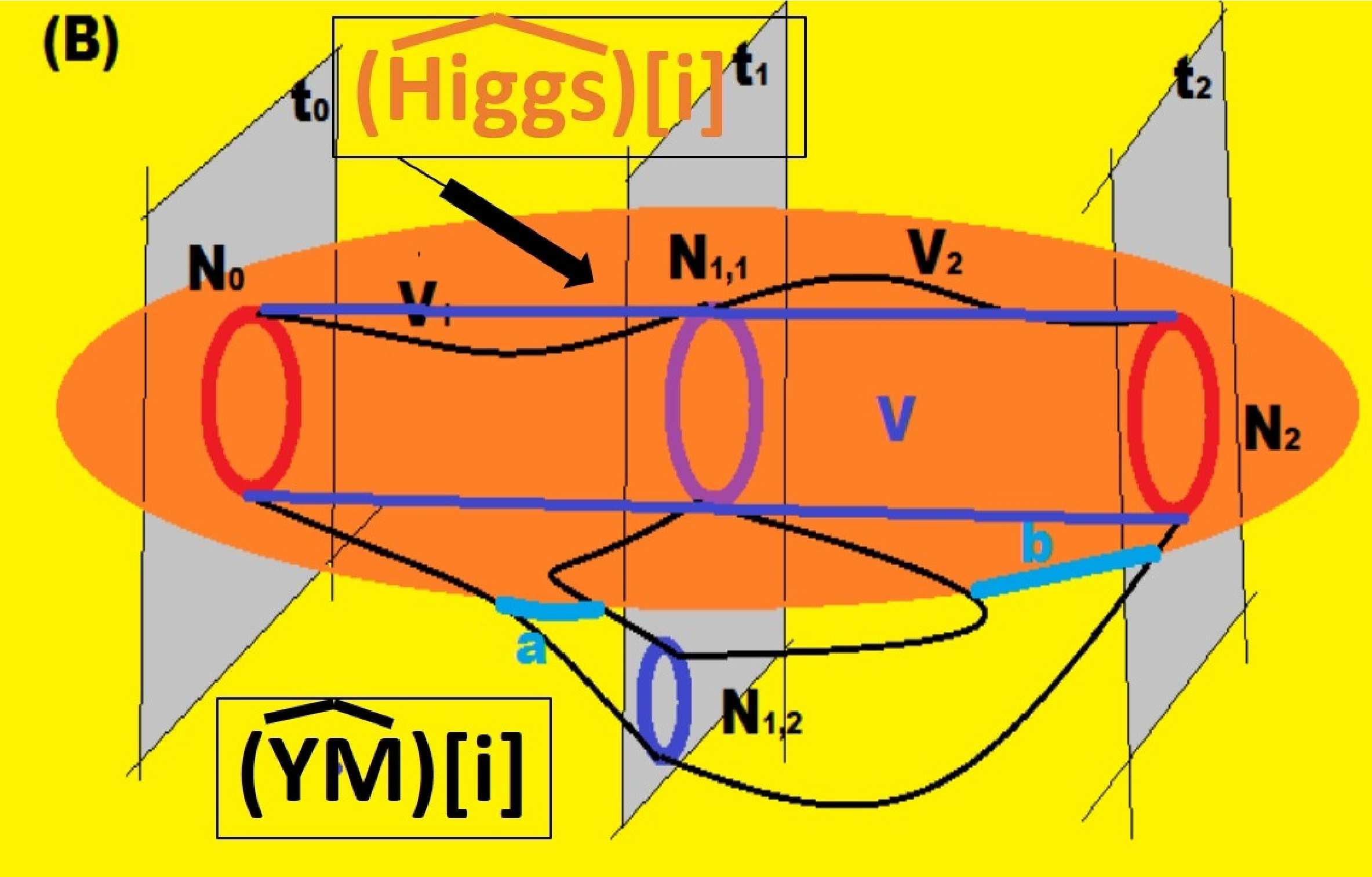}
\renewcommand{\figurename}{Fig.}
\caption{(A): Representation of a quantum Cheshire cat nonlinear propagator that spatially separates mass and spin from its original quantum particle, and next recombines these qualities in the same space-like quantum particle but at a next time. (B): Representation of a quantum Cheshire cat nonlinear propagator that spatially separates mass and electric charge from its original quantum particle and next recombines these qualities in the same space-like quantum particle, but at a next time. Such nonlinear quantum propagators are allowed on the ground of the Algebraic Topologic theory of observed quantum super Yang-Mills PDEs. In both figures the (yellow) framework represents the observed quantum super Yang-Mills PDEs, $\widehat{(YM)}[i]$, and the oval colored region represents the sub-equation $\widehat{(Higgs})[i]$, where live solutions with mass-gap. In both figures (A) and (B), the cylinders $V$ bording $N_0$ with $N_1$ represent the stationary smooth solutions.}
\label{figure-quantum-cheshire-cat-nonlinear-propagators}
\end{figure}

\begin{example}[Quantum Cheshire cat nonlinear propagators]\label{quantum-cheshire-cat-nonlinear-propagators}
The so-called ``quantum Cheshire cat" paradox is strictly related to the concept of quantum entanglement in nonlinear quantum propagators.
It refers to a quantum process where a quality of a particle, e.g., its quantum spin, can be spatially separated from the particle itself, but also recombined to it. This apparent paradox takes the name by the well known Cheshire cat in the Lewis Carroll's book, {\em Alice in Wonderland}, that can be separated from its grin, and, next, magically recombined to it. Nowadays there are experimental observations that prove such possibility realized at the quantum level. (See \cite{DENKMAYR-GEPPERT-SPONAR-LEMMEL-MATZKIN-TOLLAKSEN-HASEGAWA}). Such a phenomenon was first theoretically foreseen by means of the standard quantum mechanics, on the ground of weak-measurements. Some first interpretations considered this paradoxical behaviour as an illusion. For example, in the case of photons, the experiments do not measure the location of the photons and their polarizations simultaneously, but at different times. However recent experiments appear to prove that quantum Cheshire cats can be realized in the quantum world despite their apparent nonsense. (See also \cite{AHARONOV-ET-AL,AHARONOV-ROHRLICH,AHARONOV-POPESCU-ROHRLICH-SKRZYPCZYK,MATZKIN-PAN}.)

Here we shall prove that quantum Cheshire cats are natural phenomena in the Algebraic Topology of quantum super Yang-Mills PDEs, namely in the Pr\'astaro theory that has unified Einstein's General Relativity with Quantum Mechanics.
In fact, the observed quantum super Yang-Mills PDEs, $\widehat{(YM)}[i]$, admits nonlinear quantum propagators $V$ that can separate a particle from its qualities. We call such propagators {\em quantum Cheshire cat nonlinear propagators}.
Really, let $N_0\subset \widehat{(YM)}[i]$ be a space-like Cauchy data representing a quantum massive, charged particle with quantum spin in stationary state. We shall prove that exist nonlinear quantum propagators such that all these qualities of $N_0$ can be delocalized from $N_0$ at some instant $t_1>t_0$, where $t_0$ is the initial time of $N_0$, and then recombined together at some $t_2>t_1$. In fact we can consider a nonlinear quantum propagator $V_1$, such that $\partial V_1=N_0\bigcup P_1\bigcup (N_{1,1}\bigcup N_{1,2}\bigcup N_{1,3})$, where $\partial P_1=\partial N_0\bigcup \partial (N_{1,1}\bigcup N_{1,2}\bigcup N_{1,3})=\partial N_0\bigcup \partial N_{1,1}\bigcup \partial N_{1,2}\bigcup \partial N_{1,3}$. Let us denote respectively by $\hat m(N)$ and $\hat s(N)$ the quantum mass and quantum spin of the space-like Cauchy data $N$. Let us assume the quantum conditions reported in Tab \ref{quantum-condition-example-quantumcheshire-cat-propagator}. We call respectively $N_{1,1}$ the {\em naked $N_0$-particle}, $N_{1,2}$ the {\em massive half-naked $N_0$-particle} and $N_{1,3}$ the {\em mass-free half-naked $N_0$-particle}. From what we said in our previous definitions, it results that $N_0\subset \widehat{(Higgs)}[i]$, $N_{1,2}\subset \widehat{(Higgs)}[i]$, but $N_{1,1}\not\subset\widehat{(Higgs)}[i]$ and $N_{1,3}\not\subset \widehat{(Higgs)}[i]$. The quality of the space-like unconnected Cauchy data $N_1=N_{1,1}\bigcup N_{1,2}\bigcup N_{1,3}$ is exactly the same of $N_0$, but in this last the qualities quantum mass and quantum spin are located on the same connected component, hence they refer to the same space-like region, instead in the case $N_1$, they are spatially separated. These qualities can be recombined at a time $t_2>t_1$, whether one considers a nonlinear quantum propagator $V_2$, such that $\partial V_2=N_1\bigcup P_2\bigcup N_2$, such that $\partial P_2=\partial N_1\bigcup \partial N_2$, with $\hat m(N_2)=\hat m_0$ and $\hat s(N_2)=\hat s(N_0)$. Let us note that standing the invariance of the equation $\widehat{(YM)}[i]$ for time-translations, we can reproduce $N_0$ at any time $t_2>t_0$, as an orientable stationary space-like Cauchy data $N_2$, having the same quantum energy of $N_1$. Since $N_0$ and $N_2$ have the same quantum numbers and are related by a rigid time translation, hence are related by the identity, there exists a smooth solutions $V$ bording $N_0$ with $N_2$. Such a solution necessarily coincides with the stationary one bording $N_0$ with $N_2$. (See Example 3.23 in part I.) The solutions $V_1$ and $V_2$ are instead singular solutions. Their existence is guaranteed by what considered on integral bordism groups of $\widehat{(YM)}$ and $\widehat{(YM)}[i]$ in part I (Lemma 3.19(I) ) and in part II (Section 2). In particular one has $0=<\alpha,N_0>+<\alpha,N_1>+<\alpha,P_1>$, with $\partial P_1=\partial N_0\bigcup \partial N_1$, for all conservation laws $\alpha$. Then the $4$-dimensional smooth integral manifold $V_1$ such that $\partial V_1=N_0\bigcup P_1\bigcup N_1$, is a singular solution representing a nonlinear quantum propagator from $N_0$ and $N_1$. Similarly, there exists a singular solution $V_2$ bording $N_1$ with $N_2$. Therefore, we can state the existence of a singular nonlinear quantum propagator $X=V_1\bigcup V_2$ bording $N_0$ with $N_2$, and passing trough $N_1$. Thus in the framework of the Pr\'astaro's Algebraic Topology of quantum super Yang-Mills PDEs, the quantum Cheshire cat phenomenon is assured by the fact that the stationary smooth solution $V$, $\partial V=N_0\bigcup R\bigcup N_2$, can be represented by a singular solution $X=V_1\bigcup V_2$. (We can also call $X$ a quantum Cheshire cat propagator.) In other words the quantum Cheshire cat phenomenon is not a quantum paradox but a structural bordism property of the observed quantum super Yang-Mills PDEs. Let us also emphasize that by changing the quantum qualities in $N_0$ we can corresponding change the quantum qualities in $N_1$ and $N_2$ as quantum entangled effects. In other words quantum mass, quantum spin and other quantum qualities, can be spatially separated by means of the algebraic topological properties of nonlinear quantum propagators encoding dynamics in the observed quantum super Yang-Mills PDEs, $\widehat{(YM)}[i]$. It is interesting to emphasize that the nonlinear quantum propagator $V_1$ must necessarily intersect the Goldstone boundary into two regions, identifying two massive virtual particles $a$ and $b$ before to bord with $N_{1,1}$ and $N_{1,3}$. The same happens with $V_2$. It identifies two massive virtual particles $c$ and $d$ before to bord $N_1$ with $N_2$, since this last is contained into $\widehat{(Higgs)}[i]$. (See Fig. \ref{figure-quantum-cheshire-cat-nonlinear-propagators}(A).\footnote{In that picture (and in similar ones reported below) the planes $t_0$, $t_1$ and $t_2$ mean that the corresponding framed particles belong to the fibers $\widehat{(YM)}[i]_t=\tau^{-1}(t)$, with $t=t_0,\, t_1,\, t_2$, respectively. Here $\tau$ is defined by composition: $\xymatrix{\widehat{(YM)}[i]\ar[r]^(0.6){\pi_2}\ar@/_1pc/[rrr]_(0.5){\tau}&N\ar@{=}[r]^(0.4){\backsim}&T\times S_\psi\ar[r]^(0.6){pr_1}&T}$ .})
\begin{table}[h]
\caption{Quantum conditions in example quantum Cheshire cat propagator.}
\label{quantum-condition-example-quantumcheshire-cat-propagator}
\begin{tabular}{|l|l|}
\hline
{\footnotesize\rm $N_0$}&{\footnotesize\rm$N_1=N_{1,1}\bigcup N_{1,2}\bigcup N_{1,3}$}\\
\hline \hline
\hfil{\footnotesize\rm $\hat m_0=\hat m(N_0) $}\hfil&{\footnotesize\rm$\hat m_{1,1}=\hat m(N_{1,1})=0 $}\\
\hline
 {\footnotesize\rm$\hat s_0=\hat s(N_0)$}&{\footnotesize\rm$\hat m_{1,2}=\hat m(N_{1,2})=\hat m_0 $}\\
\hline
 {\footnotesize\rm}&{\footnotesize\rm$\hat m_{1,3}=\hat m(N_{1,3})=0$}\\
\hline
 {\footnotesize\rm}&{\footnotesize\rm$\hat s_{1,1}=\hat s(N_{1,1})=0$}\\
\hline
 {\footnotesize\rm}&{\footnotesize\rm$\hat s_{1,2}=\hat s(N_{12})=0$}\\
\hline
 {\footnotesize\rm}&{\footnotesize\rm$\hat s_{1,3}=\hat s(N_{1,3})=\hat s_0$}\\
\hline
\end{tabular}
\end{table}

Quantum Cheshire cat nonlinear propagators can also separate electric charges from particles. For example we can start from a massive and electric charged quantum particle, identified with a stionary space-like Cauchy data $N_0\subset \widehat{(Higgs)}[i]\subset\widehat{(YM)}[i]$, and by means of a nonlinear quantum propagator $V_1$, $\partial V_1=N_0\bigcup Q_1\bigcup N_1$, with $N_1=N_{1,1}\bigcup N_{1,2}$, $\hat m(N_{1,1})=\hat m(N_0)=\hat m_0$,  $\hat q(N_{1,1})=0$, $\hat m(N_{1,2})=0$,  $\hat q(N_{1,2})=\hat q(N_0)=\hat q_0$. In this way one spatially separates the electric charge from the massive particle. Then by a next quantum nonlinear bordism $V_2$, $\partial V_2=N_1\bigcup N_2$, where $N_2$ is another stionary Cauchy data at a time $t_2>t_0$, having the same qualities of $N_0$, we can recombine the electric charge with the mass-free electric charged half-naked $N_0$-particle $N_{1,2}$. Similarly to the previous case one realizes two virtual massive particle $a=V_1\bigcup\widehat{(Goldstone)}[i]$ and $b=V_2\bigcup\widehat{(Goldstone)}[i]$. Let us emphasize that both nonlinear quantum propagators $V_1$ and $V_2$ satisfy the constraint $\widehat{(YM)}[i]_\blacksquare$, since they have $\mathfrak{Q}[-]=0$, but they cannot stay inside $\widetilde{\widehat{(YM)}[i]_\bullet}$, i.e., the completely integrable and formally integrable sub-equation of $\widehat{(YM)}[i]$, where live exotic-quantum supergravity-free solutions. In fact they propagate quantum massive particles, hence cannot have quantum flat propagators. (See Fig. \ref{figure-quantum-cheshire-cat-nonlinear-propagators}(B).) In other words quantum Cheshire cat nonlinear propagators cannot be  exotic-quantum supergravity free solutions.
\end{example}

\begin{example}[Quantum photons entangled with massive bound photons]\label{quantum-photons-entangled-with-massive-bound-photons}
Other important examples of quantum entanglements interest photons and massive bound-states of photons. Really quantum photons can be constrained to be confined in bound states thanks to the geometric structure of $\widehat{(YM)}$, that contains the sub-equation $\widehat{(Higgs)}$, and thanks to nonlinear quantum propagators of the type pictured in Fig. \ref{entanglement-photons-massive-bound-photons}. There $\sigma$ is a quantum massive particle representing two massive bound photons. Such particles can have spin $=0,\, 1,\, 2$. For example for $s=0$, $\sigma$ can be identified with a Higgs-particle $H^0$, for $s=1$ $\sigma$ can be taken as $Z^0$ and for $s=2$, we can consider $\sigma$ as a massive graviton $G'$.\footnote{The quantum particle graviton is usually considered a massless neutral particle of spin $2$. Let us emphasize also that recent experiments realized bound states of photons. See, e.g., \cite{CHANG, LUKIN-VULETIC}.} This result answer also to a question by F. J. Dyson about the existence of gravitons \cite{DYSON1, DYSON2}.\footnote{See also the related paper \cite{ROTHMAN-BOUGHN} and the following \href{http://en.wikipedia.org/wiki/Graviton}{Graviton-Wikipedia link}.} In fact, from above considerations we can assume also the existence of a massive quantum graviton particle, $G'$, with respect to a quantum relativistic frame, namely inside $\widehat{(Higgs)}[i]$. Then it should remain to prove the existence of a massless quantum graviton particle, namely a neutral massless quantum particle with spin $s=2$. But this can be similarly obtained by considering an inverse process of the previous one, and inside $\widehat{(YM)}[i]$. In other words, we can consider $G'$ as a bound state of two massless gravitons $G$, obtained by means of a nonlinear quantum propagator $V'$, such that $\partial V'=(G\bigcup G)\bigcup P'\bigcup G'$, $\partial P'=(\partial G\bigcup \partial G)\bigcup\partial G'$. (See Fig. \ref{entanglement-photons-massive-bound-photons}.) Let us emphasize that a bound state from two particle of spin $s=2$, can have $s=4,\, 3,\, 2,\, 1,\, 0$. Therefore, the assumption that $G'$ is a massive bound state of two massless graviton particle is justified. However, from these considerations we can also state existence of massive bound states $\sigma(G)$, of two quantum gravitons, having spin $s\in\{0,1,3,4\}$. By conclusion, we can answer to the Dyson's question by saying that even if it is very hard to detect massless quantum graviton particles, it should be less hard to observe neutral massive particles with spin $0\le s\le 4$, that we could more easily relate to massless neutral quantum particles with spin $s=2$, hence to quantum graviton particles.
\end{example}
\begin{figure}[h]
\includegraphics[width=4.3cm]{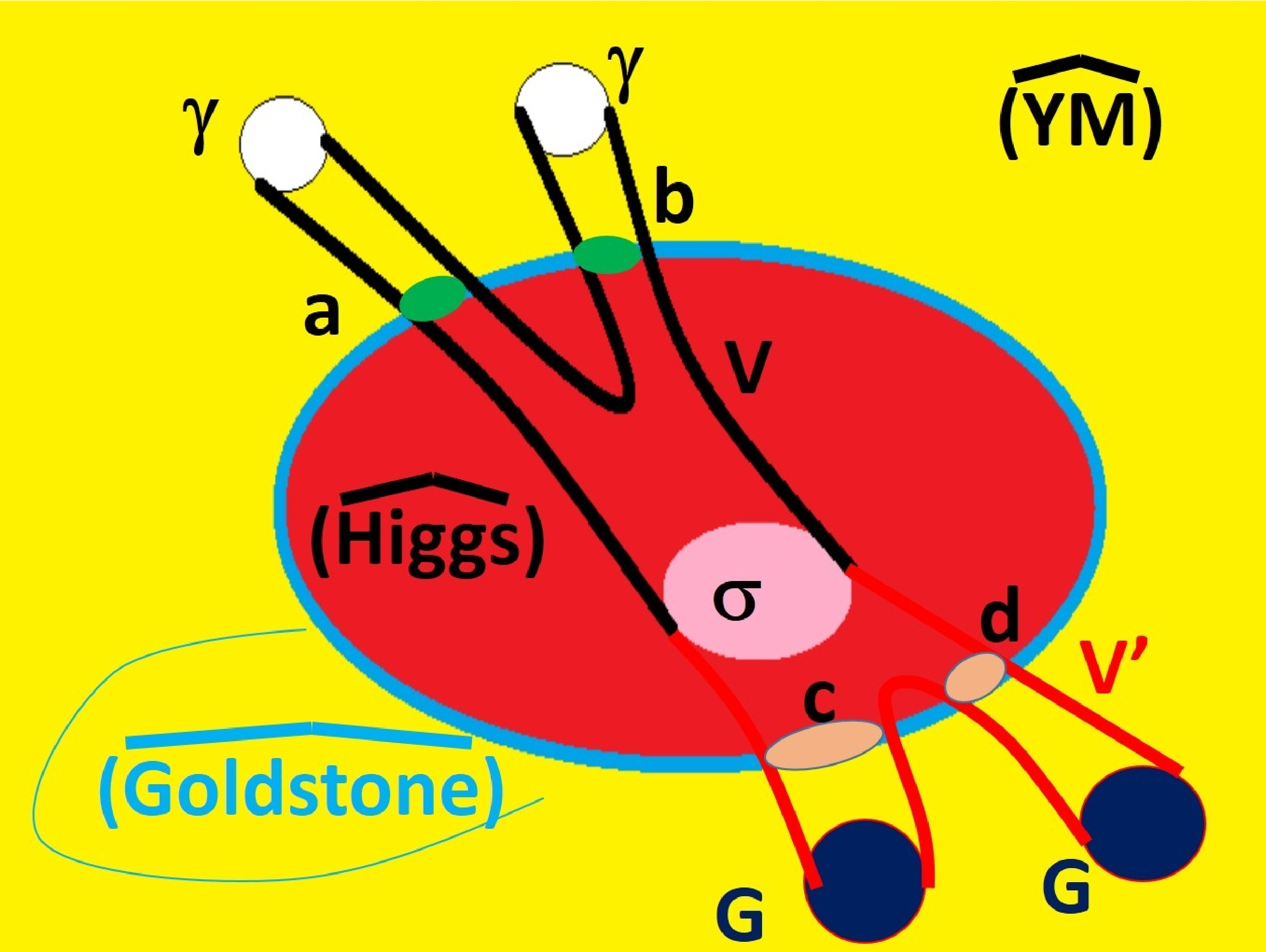}
\renewcommand{\figurename}{Fig.}
\caption{Representation of a nonlinear quantum propagator, $V$, encoding the realization of two-bound massive photons, represented in the picture by $\sigma$. This particle can be very massive, depending on the initial energy of the two photons $\gamma$. In this way one can produce also the following neutral quantum particles: Higgs particle ($H^0$, mass-gap $125.3\, GeV$, spin $s=0$), $Z$ boson particle ($Z^0$, mass-gap $91.2\, GeV$, spin $s=1$), massive graviton particle ($G'$, mass-gap $m_{G'}\, GeV$, spin $s=2$). (The mass of the graviton is left undetermined.) In the picture are also represented two massive photons $a$ and $b$ identified by nonlinear quantum propagator crossing the Goldstone boundary $\widehat{(Goldstone)}$. These massive particles strongly interact between them and give a massive bound state. In the same picture it is represented also a nonlinear quantum propagator $V'$ that gives the massive quantum graviton $G'$ as a bound state of two massless quantum gravitons $G$. (Similar representations can be obtained by considering observed nonlinear quantum propagators, namely nonlinear quantum propagators inside $\widehat{(YM)}[i]$.) Also $V'$ identifies two massive particles $c$ and $d$ that strongly interact generating the bound state $G'$. Therefore $G'$ can be considered a bound state of photons, but also a bound state of gravitons.}
\label{entanglement-photons-massive-bound-photons}
\end{figure}

\begin{example}[Quantum chimeras - Exotic quantum dynamic Standard Model extension]\label{exotic-quntum-dynamic-standard-model-extension}
Here we shall prove that by using entanglement phenomena allowed by (exotic) nonlinear quantum propagators, we can go beyond the Standard Model, by showing dynamic relations between the quantum particles considered in the Standard Model, that this framework does not permit.

Let us define {\em quantum chimera} a massless, neutral, white quantum fermion having spin $s=\frac{1}{2}$. The existence of quantum chimera is assured by the geometric structure of $\widehat{(YM)}$. In fact, in $\widehat{(YM)}$ we can distinguish the subsets reported in {\em(\ref{sub-equations-yang-mills})}

\begin{equation}\label{sub-equations-yang-mills}
  \left\{
  \begin{array}{l}
   \hbox{\rm Quantum-ghost-super Yang-Mills PDEs:}\\
   \widehat{(YM)}_{ghost}=\complement(\overline{\widehat{(Higgs)}})\bigcap \complement(\overline{\widehat{(Higgs)}_{w}})\subset \widehat{(YM)}\\
   \hbox{\rm Quantum-boson-ghost-super Yang-Mills PDEs:} \\
    \widehat{(YM)}_{\mathfrak{b}-ghost}=\widehat{(Boson)}\bigcap\widehat{(YM)}_{ghost}\subset \widehat{(YM)}\\
     \hbox{\rm Quantum-fermion-ghost-super Yang-Mills PDEs):} \\
     \widehat{(YM)}_{\mathfrak{f}-ghost}=\widehat{(Fermion)}\bigcap\widehat{(YM)}_{ghost}\subset \widehat{(YM)}\\
   \hbox{\rm Quantum-e-colour-ghost-super Yang-Mills PDEs:} \\
    \widehat{(YM)}_{e-colour-ghost}=\complement(\overline{\widehat{(Higgs)}})\bigcap \widehat{(Higgs)}_{w}\subset \widehat{(YM)}\\
      \hbox{\rm Quantum-boson-e-colour-ghost-super Yang-Mills PDEs:} \\
    \widehat{(YM)}_{\mathfrak{b}-e-colour-ghost}=\widehat{(YM)}_{e-colour-ghost}\bigcap \widehat{(Boson)}\subset \widehat{(YM)}\\
   \hbox{\rm Quantum-fermion-e-colour-ghost-super Yang-Mills PDEs:} \\
    \widehat{(YM)}_{\mathfrak{f}-e-colour-ghost}=\widehat{(YM)}_{e-colour-ghost}\bigcap \widehat{(Fermion)}\subset \widehat{(YM)}\\
  \hbox{\rm Quantum-e-colour-super Yang-Mills PDEs):} \\
     \widehat{(Higgs)}_{e-colour}=\widehat{(Higgs)}\bigcap\widehat{(Higgs)}_w\subset \widehat{(YM)}\\
    \hbox{\rm Quantum-booson-e-colour-super Yang-Mills PDEs):} \\
     \widehat{(Higgs)}_{\mathfrak{b}-e-colour}=\widehat{(Higgs)}_{e-colour}\bigcap\widehat{(Boson)}\subset \widehat{(YM)}\\
      \hbox{\rm Quantum-fermion-e-colour-super Yang-Mills PDEs):} \\
     \widehat{(Higgs)}_{\mathfrak{f}-e-colour}=\widehat{(Higgs)}_{e-colour}\bigcap\widehat{(Fermion)}\subset \widehat{(YM)}\\
\end{array}
  \right.
\end{equation}
Since these equations are obtained by intersection of open subsets of $\widehat{(YM)}$, they are all formally integrable and completely integrable sub-equations of $\widehat{(YM)}$. Therefore, for any point of these equations pass solutions that are contained therein. For example for any $q\in  \widehat{(YM)}_{ghost}$, there are solutions $V\subset\widehat{(YM)}_{ghost}$, such that $q\in V$. Transversal sections of such solutions represent massless, neutral, white quantum particles. Therefore a quantum chimera is a transversal section of a solution $V\subset \widehat{(YM)}_{\mathfrak{f}-ghost}$, having quantum spin with $s=\frac{1}{2}$.\footnote{Remark that intersections listed in (\ref{sub-equations-yang-mills}) cannot be empty sets for construction.
In Tab. \ref{prototype-quantum-particles} are reported some prototype quantum particles belonging to the sub-PDEs listed in (\ref{sub-equations-yang-mills}).}

\begin{table}[h]
\caption{Remarkable opens sub-PDEs $X\subset \widehat{(YM)}$, and prototype quantum particles belonging therein.}
\label{prototype-quantum-particles}
\scalebox{0.8}{$\begin{tabular}{|c|l|}
\hline
{\footnotesize\rm $X\subset\widehat{(YM)}$}&{\footnotesize\rm Prototype quantum particles}\\
\hline \hline
\hfil{\footnotesize\rm $\widehat{(YM)}_{ghost}$}\hfil&{\footnotesize\rm photon ($\gamma$), graviton ($G$), neutrino ($\nu$), chimera ($c$)}\\
\hline
\hfil{\footnotesize\rm $\widehat{(YM)}_{\mathfrak{b}-ghost}$}\hfil&{\footnotesize\rm $\gamma$, $G$}\\
\hline
\hfil{\footnotesize\rm $\widehat{(YM)}_{\mathfrak{f}-ghost}$}\hfil&{\footnotesize\rm $\nu$, $c$}\\
\hline
\hfil{\footnotesize\rm $\widehat{(YM)}_{e-colour-ghost}$}\hfil&{\footnotesize\rm gluon ($g$), colour-graviton (${}_cG$), colour-chimera (${}_cc$)}\\
\hline
\hfil{\footnotesize\rm $\widehat{(YM)}_{\mathfrak{b}-e-colour-ghost}$}\hfil&{\footnotesize\rm $g$, ${}_cG$}\\
\hline
\hfil{\footnotesize\rm $\widehat{(YM)}_{\mathfrak{f}-e-colour-ghost}$}\hfil&{\footnotesize\rm ${}_cc$}\\
\hline
\hfil{\footnotesize\rm $\widehat{(Higgs)}_{e-colour}$}\hfil&{\footnotesize\rm quark ($q$), massive-colour-graviton (${}_cG'$)}\\
\hline
\hfil{\footnotesize\rm $\widehat{(Higgs)}_{\mathfrak{b}-e-colour}$}\hfil&{\footnotesize\rm massive-colour-graviton (${}_cG'$)}\\
\hline
\hfil{\footnotesize\rm $\widehat{(Higgs)}_{\mathfrak{f}-e-colour}$}\hfil&{\footnotesize\rm $q$, massive-colour-chimera (${}_cc'$)}\\
\hline
\multicolumn {2}{l}{\footnotesize\rm Colour-graviton: ${}_cG$: quantum bound state of two gluons with collective spin $s=2$.}\\
\multicolumn {2}{l}{\footnotesize\rm Massive colour-graviton: ${}_cG'$: massive quantum bound state of two gluons with collective spin $s=2$.}\\ \end{tabular}$}
\end{table}

$\bullet$\hskip 3pt Massless, neutral, white quantum bosons, or quantum fermions, of any quantum spin $\hat s$, can be dynamically obtained as bound states of a suitable number of quantum chimeras. In fact, let us consider a bound state $c$ of two quantum chimeras $a$ and $b$ encoded by a nonlinear quantum propagator $V\subset \widehat{(YM)}_{ghost}$ such that $\partial V=(a\bigcup b)\bigcup P\bigcup c$, with $\partial P=(\partial a\bigcup \partial b)\bigcup \partial c$. Then $c$ is a quantum particle having quantum spin with $s=\frac{1}{2}+\frac{1}{2}=1$, or $s=1-1=0$. Therefore, $c$ can be a quantum boson with spin $s=1$ or $s=0$. Furthermore, by considering a bound state $e$ made by $c$ and another quantum chimera $d$, we can obtain a quantum particle with spin $s=1+\frac{1}{2}=\frac{3}{2}$, or $s=\frac{3}{2}-1=\frac{1}{2}$, or $s=0+\frac{1}{2}=\frac{1}{2}$. Therefore, $e$ can be a quantum fermion with spin $s=\frac{3}{2}$ or $s=\frac{1}{2}$.
By continuing in this way we can have bound states in $\widehat{(YM)}_{ghost}$ with spin $s=\frac{3}{2}+\frac{1}{2}=2$, or $s=2-1=1$, or $s=1-1=0$ and $s=\frac{1}{2}+\frac{1}{2}=1$, or $s=1-1=0$. Thus we can again have a quantum boson with spin $s=2$ or $s=1$ or $s=0$. This process can continue and we can dynamically obtain quantum fermions of any fixed quantum spin $s=\frac{r}{2}$, $r\nmid 2$, $r\in\mathbb{N}$.

Furthermore, by using the Goldbach's conjecture (proved in \cite{PRAS30}), we can state also that any integer $n\in\mathbb{N}$ can be represented in the following way: $n=\frac{p_1}{2}+\frac{p_1}{2}$, where $p_1$ and $p_2$ are two prime numbers. Therefore, taking into account above considerations on quantum bound states obtained starting from quantum chimeras, we can state:
Massless, neutral, white quantum bosons of any spin $s=n\in\mathbb{N}$, can be dynamically obtained as bound states of a suitable numbers of quantum chimeras.

\textbf{Warning}. Above results do not mean that all quantum fermions and all quantum bosons can be obtained as bound states of quantum chimeras ! Really the Pauli's exclusion principle implies that two quantum chimeras in a bound state cannot occupy the same energetic level. Therefore, a quantum boson of spin $s=1$, or $s=0$, is a bound state of two quantum chimeras whether its quantum energy has a fine structure. Furthermore, there are quantum fermions of spin $s=\frac{1}{2}$, (e.g., proton and neutron) whose spin cannot be algebraically calculated from the spin of its quarks components. There the collective effects by gluons become dominant.

$\bullet$\hskip 3pt Massless neutrinos of Standard Model can be considered as prototypes of quantum chimeras.

$\bullet$\hskip 3pt A quantum photon $\gamma$ can be dynamically obtained as a bound state of two quantum chimeras $a$ and $b$ if it is not monochromatic. More precisely, there exist a nonlinear quantum propagator $V\subset \widehat{(YM)}_{ghost}$, such that $\partial V=(a\bigcup b)\bigcup P\bigcup \gamma$, with $\partial P=(\partial a\bigcup \partial b)\bigcup\partial\gamma$, whether $\gamma$ has a fine structure.

$\bullet$\hskip 3pt Taking into account Example \ref{quantum-photons-entangled-with-massive-bound-photons} we can also state that the quantum bosons $Z^0$, $H^0$ and $G$ can be dynamically obtained as bound states of quantum chimeras whether it should be possible to prove that they have a fine structure.

$\bullet$\hskip 3pt Massive, electric-charged, colour quantum fermions, of any spin $s$, can be dynamically obtained as bound states of a suitable number of quantum chimeras. In fact, any nonlinear quantum propagator $V\subset\widehat{(YM)}$, such that $\partial V=a\bigcup P\bigcup b$, where $a\in \widehat{(YM)}_{\mathfrak{f}-ghost}$ and $b\in \widehat{(Higgs)}_{e-colour}\subset\widehat{(YM)}$, propagates the quantum massless, neutral, white quantum fermion $a$ into a quantum fermion with the same quantum spin of $a$ but having electric-charge, colour and mass-gap.

\textbf{Warning}. Such a nonlinear quantum propagator $V$, when observed, namely inside $\widehat{(YM)}[i]$, is necessarily exotic one, namely it does not respect the constraint equation $\widehat{(YM)}[i]_\blacksquare $, given in (\ref{sub-equation-zero-defect-quantum-electric-charge-a}).

$\bullet$\hskip 3pt For example, any quantum quark $q$ can be dynamically obtained as a transversal section of a nonlinear quantum propagator $V$ bording a quantum chimera $a\in\widehat{(YM)}_{\mathfrak{f}-ghost}$ with a quantum particle $b\subset \widehat{(Higgs)}_{e-colour}$, having spin $s=\frac{1}{2}$, and all the other quantum numbers of a quark.

$\bullet$\hskip 3pt Similarly one can obtain leptons, identified with suitable white quantum particles inside $\widehat{(Higgs)}_{\mathfrak{f}-e-colour}$.

$\bullet$\hskip 3pt Massless, colour-charged quantum bosons can be dynamically obtained as bound states of a suitable number of quantum chimeras.
In fact, suitable nonlinear quantum propagators $V$ such that $\partial V=a\bigcup P\bigcup b$, where $a\in  \widehat{(YM)}_{\mathfrak{b}-ghost}$ and $b\in  \widehat{(YM)}_{\mathfrak{b}-colour-ghost}$, propagate $a$ into a neutral, massless, colour quantum boson.

\textbf{Warning}. Such a nonlinear quantum propagator $V$, when observed, namely inside $\widehat{(YM)}[i]$, must be necessarily exotic, namely it does not respect the constraint equation $\widehat{(YM)}[i]_\blacksquare $, given in (\ref{sub-equation-zero-defect-quantum-electric-charge-a}).

$\bullet$\hskip 3pt Whether quantum gluons $g$ have a fine structure, they can be dynamically obtained as a transversal section of an exotic nonlinear quantum propagator $V$ bording a quantum bosonic bound state of quantum chimeras, belonging to $\widehat{(YM)}_{\mathfrak{b}-ghost}$, and with spin $s=1$, with a suitable massless neutral, colour spin $s=1$ quantum particle inside $\widehat{(YM)}_{\mathfrak{b}-colour-ghost}$, having just all the quantum numbers of gluons.

Taking into account Example \ref{quantum-photons-entangled-with-massive-bound-photons}, we can also state that the quantum electric charged bosons $W^{\pm}$ can be dynamically obtained as bound states of quantum chimeras whether they have a fine structure (that at the moment is unknown).

$\bullet$\hskip 3pt Let us emphasize that exotic nonlinear quantum propagators starting from $\widehat{(YM)}_{\mathfrak{b}-ghost}$ and ending into $\widehat{(YM)}_{\mathfrak{b}-colour-ghost}$ can encode gluons starting fron photons.

$\bullet$\hskip 3pt Taking into account Example \ref{quantum-photons-entangled-with-massive-bound-photons}, we can also state that quantum graviton can be obtained as quantum bound states of two quantum gluons.

As just emphasized we can consider massless neutrinos of the Standard Model as examples of quantum chimeras. Therefore above results centralize the role played by neutrinos in quantum world. With this respect, let us recall that recent experimental results on interactions of neutrinos with matter, recognized neutrinos to have little masses. However, this simple fact is not sufficient for itself to state that the Standard Model must be modified to include massive neutrinos. In fact we have proved that massless particles acquire mass when interact with massive particles. Therefore, we can continue to consider massless neutrinos according to the Standard Model.
\end{example}

\begin{remark}[Quantum entanglement and quantum bound states]\label{quantum-entanglement-and-quantum-bound-states}
Let us emphasize that quantum entanglement, a phenomenon related to the algebraic-topologic structure of nonlinear quantum propagators, is related also to the concept of quantum bound state. For example the nonlinear quantum propagator encoding the beta decay of a free neutron, namely $n\to p+e^-+\overline{\nu}_e$, means that there proton, $p$, electron $e^-$, and antineutrino $\overline{\nu}_e$, are quantum entangled with $n$. On the other hand, a quantum bound state of $\{\overline{\nu}_e,p,e^-\}$, can have spin $s=\frac{3}{2}$ and $s=\frac{1}{2}$. Therefore we can consider neutron $n$ a quantum bound state of $\{\overline{\nu}_e,p,e^-\}$ with collective spin $s=\frac{1}{2}$.\footnote{Note that the electron capture of proton, namely the process $p+e^-\to n+\nu_e$, is just obtained from quantum crossing symmetry of the beta neutron decay. (See \cite{PRAS29}.) This supports the idea that neutron is a quantum bound state of $\{p,e^-,\overline{\nu}_e\}$.} This suggests us to foresee a quantum bound state of $\{\overline{\nu}_e,p,e^-\}$ with collective spin $s=\frac{3}{2}$, different from $n$, and that we denote $n_{\frac{3}{2}}$. This point of view can be generalized to other decays. For example, a bound states of $\{\overline{\nu}_{e},e^-\}$ can have $s=1$ and $s=0$. Then since one has the decay $W^-\to \overline{\nu}_{e}+e^-$, we can consider $W^-$ a quantum bound state of $\{\overline{\nu}_{e},e^-\}$, with a collective spin $s=1$. (Similarly we can foresee a quantum bound state $W^{-}_{0}$ of $\{\overline{\nu}_{e},e^-\}$, with a collective spin $s=0$.) This agrees with the neutron beta decay that in the Standard Model is considered in two steps, $n\to p+W^-$ and $W^-\to e^-+\overline{\nu}_e$. Therefore, the nonlinear quantum propagator encoding the neutron beta decay can have a structure as pictured in Fig. \ref{beta-neutron-decay-nonlinear-quantum-propagator} in Appendix B. Thus we can consider $n$ also a quantum bound state of $\{p,W^-\}$ with collective spin $s=\frac{1}{2}$.\footnote{A quantum bound state of $\{p,W^-\}$ can have a collective spin $s=\frac{1}{2}$, or $s=\frac{3}{2}$.} On the other hand, from the Standard Model we know that neutron is made by two quarks down, one quark up and gluons. Thus a quantum bound state cannot be identified with the set of particles that are quantum entangled with such a bound state.
\end{remark}
\begin{example}[How many gluons there are in a proton ?]\label{how-many-gluons-there-are-in-a-proton}
In the Standard Model it is assumed that a proton is made by two up quarks and one down quark, glued together by some gluons. But no news there are about the number of such gluons. Taking into account that the spin of a gluon is $s=1$, the spin of a proton is $s=\frac{1}{2}$, and that a quantum bound state of these quantum particles: $\{ u,u,d,n\cdot g\}_{n\in\mathbb{N}}$, has a collective spin $s=\frac{1}{2}$, iff there exists an integer $m$, such that $m=1+n$, we see that a proton can have any number $n$ of gluons with $n\ge 1$. Therefore, we can assume that there are many different excited states in quantum proton. Let us denote them by ${}_np$, $n\ge 1$. Which of these different excited states is realized in quantum reactions it depends from the specific reaction.\footnote{Recent experimental results have emphasized problems on the exact dimension of proton and about the gluon contribution on the proton spin. (See, e.g., the following links: \href{http://www.mpq.mpg.de/~rnp/wiki/pmwiki.php/Main/Publications}{J. C. Bernauer and R. Pohl, Scientific American, Feb. 2014}, \href{http://www.scientificamerican.com/article/proton-spin-mystery-gains-a-new-clue1/}{C. Moskowitz, Scientific American, July 2014} and \cite{FLORIAN-SASSOT-STRATMANN-VOGELSANG,NOCERA-BALL-FORTE-RIDOLFI-ROJO}.) Really these difficulties could be related to the different aspects of proton in the forms ${}_np$, $n\ge 1$.} In Fig. \ref{orbital-configurations-n-proton} we have represented possible orbital configurations of such ${}_np$. These representations have only a symbolic meaning, since the most realistic description of proton should be like a plasma of quarks and gluons. With this respect, let us emphasize that gluons entering in interaction with quarks must necessarily cross the Goldstone boundary, entering inside $\widehat{(Higgs)}$. This is enough to state that gluons inside proton are massive quantum particles. Probably the mass of such gluons should be very little, (like neutrino's mass in interaction with matter), hence their contribution to the proton's mass should be negligible,
\end{example}
\begin{figure}[h]
\includegraphics[width=4.3cm]{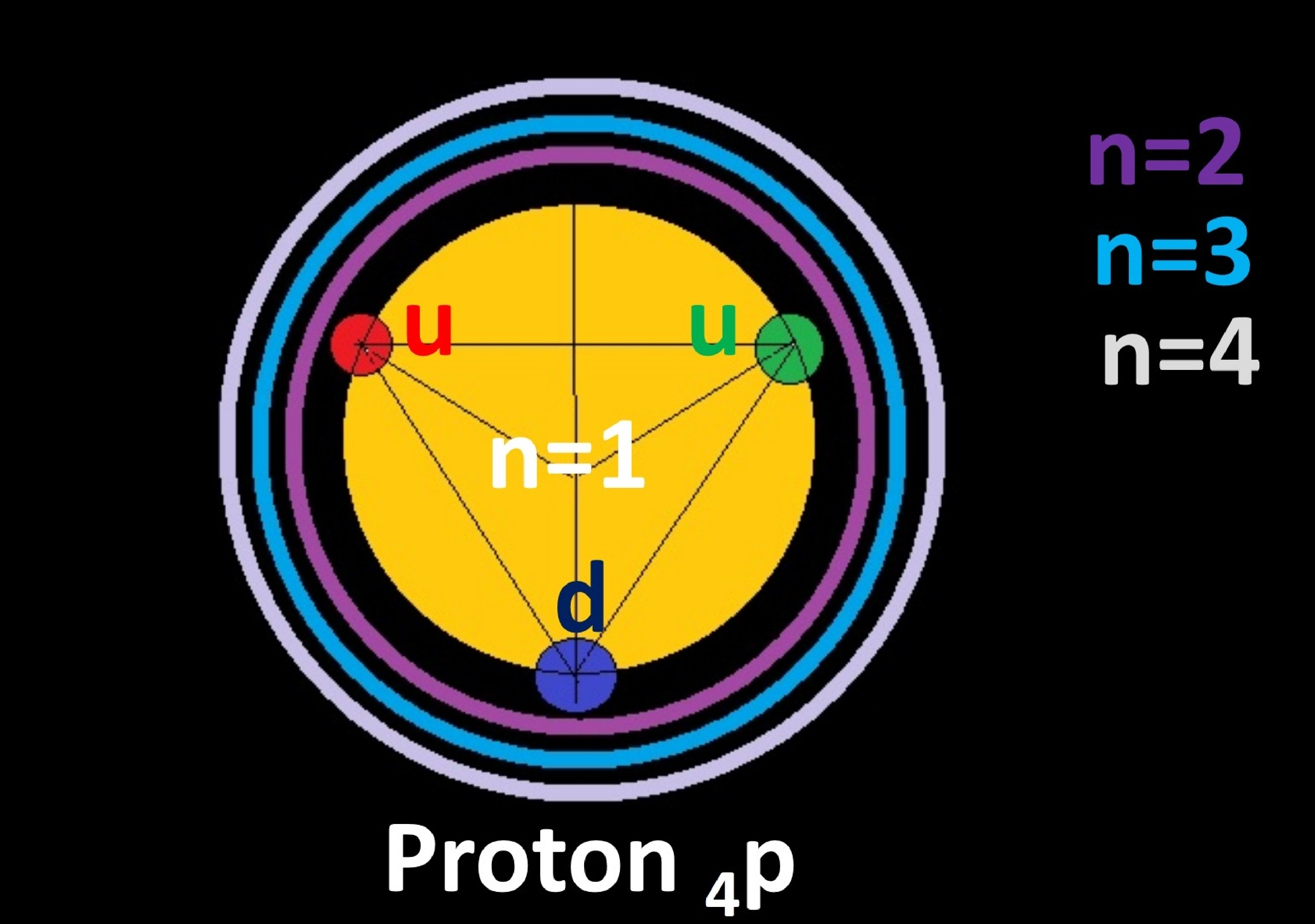}
\renewcommand{\figurename}{Fig.}
\caption{Representations of quantum orbitals of ${}_np$, for $n=1$, $n=2$, $n=3$ and $n=4$. There the central sphere symbolically denotes the first energy level of proton, namely with $n=1$ gluons. The other circled spheres denote the levels $n=2$, $n=3$, and $n=4$, of increasing dimension respectively. These representations have only a symbolic meaning, since the most realistic description of proton should be like a plasma of quarks and gluons.}
\label{orbital-configurations-n-proton}
\end{figure}

\begin{example}[There are gravitons in a proton ?]\label{gravitons-in-a-proton}
From Example \ref{exotic-quntum-dynamic-standard-model-extension} we can also state that quantum gravitons $G$ can be recognized inside protons ${}_np$, with $n\ge 2$. In fact, inside such protons two gluons ca be constrained to generate a quantum bound state with spin $s=2$, namely a massive-colour quantum graviton ${}_cG'$. In this sense a proton with many gluons can be considered a plasma of quarks, gluons and gravitons, as claimed in \cite{PRAS22}.
\end{example}

\begin{example}[Massive quantum gravitons, quantum Higgs particle and proton decays]\label{gravitons-proton-decays}
A possible quantum process where observe a massive quantum graviton is just in the rare (and not yet observed) proton decay $p\to e^++2\gamma$. Its half-life of the order of $10^{33}$ years, means that one could observe one decay for year in a sample containing $10^{33}$ protons. (For information about the actual status of research on this decay see, for example, the following link \href{http://en.wikipedia.org/wiki/Proton_decay}{Wikipedia-proton-decay}.) Taking into account the geometric structure of $\widehat{(YM)}$, we can state that nonlinear quantum propagators $V$ encoding such a decay, should identify a massive intermediate quantum particle, that could be identified with a quantum bound state of spin $s=2$ (massive quantum graviton) or $s=0$ (massive quantum Higgs particle).  In other words, $V=V_1\bigcup_{G'}V_2$, $\partial V_1=p\bigcup P_1\bigcup(e^+\bigcup G')$ and $\partial V_2=G'\bigcup P_2\bigcup(\gamma\bigcup \gamma)$, or $V'=V'_1\bigcup_{H^0}V'_2$, $\partial V'_1=p\bigcup P'_1\bigcup(e^+\bigcup H^0)$ and $\partial V'_2=H^0\bigcup P'_2\bigcup(\gamma\bigcup \gamma)$. (See Fig. \ref{proton-decay-and-quantum-graviton-higgs}.)
\end{example}

\begin{figure}[h]
\includegraphics[width=4.3cm]{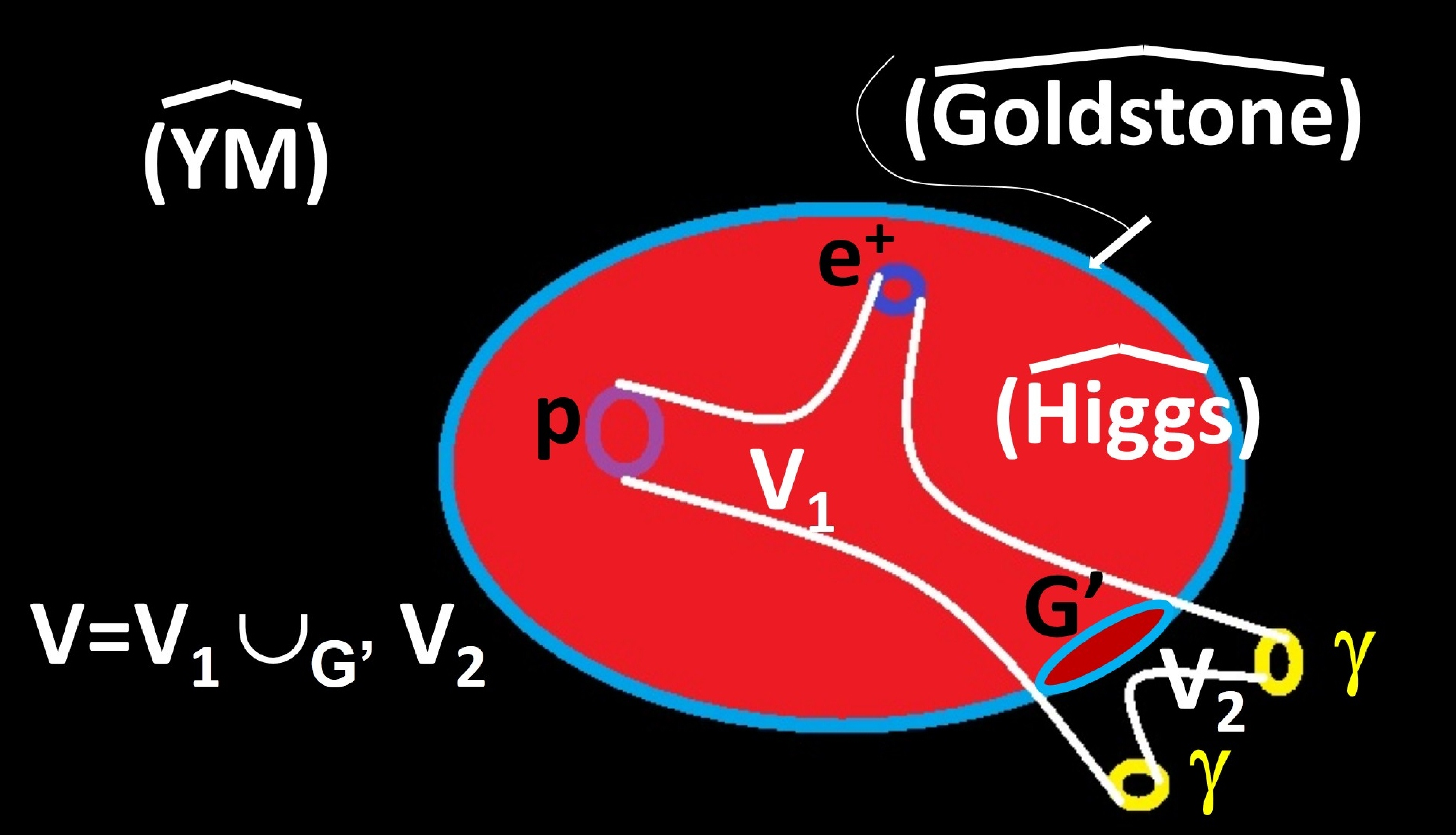}
\includegraphics[width=4.42cm]{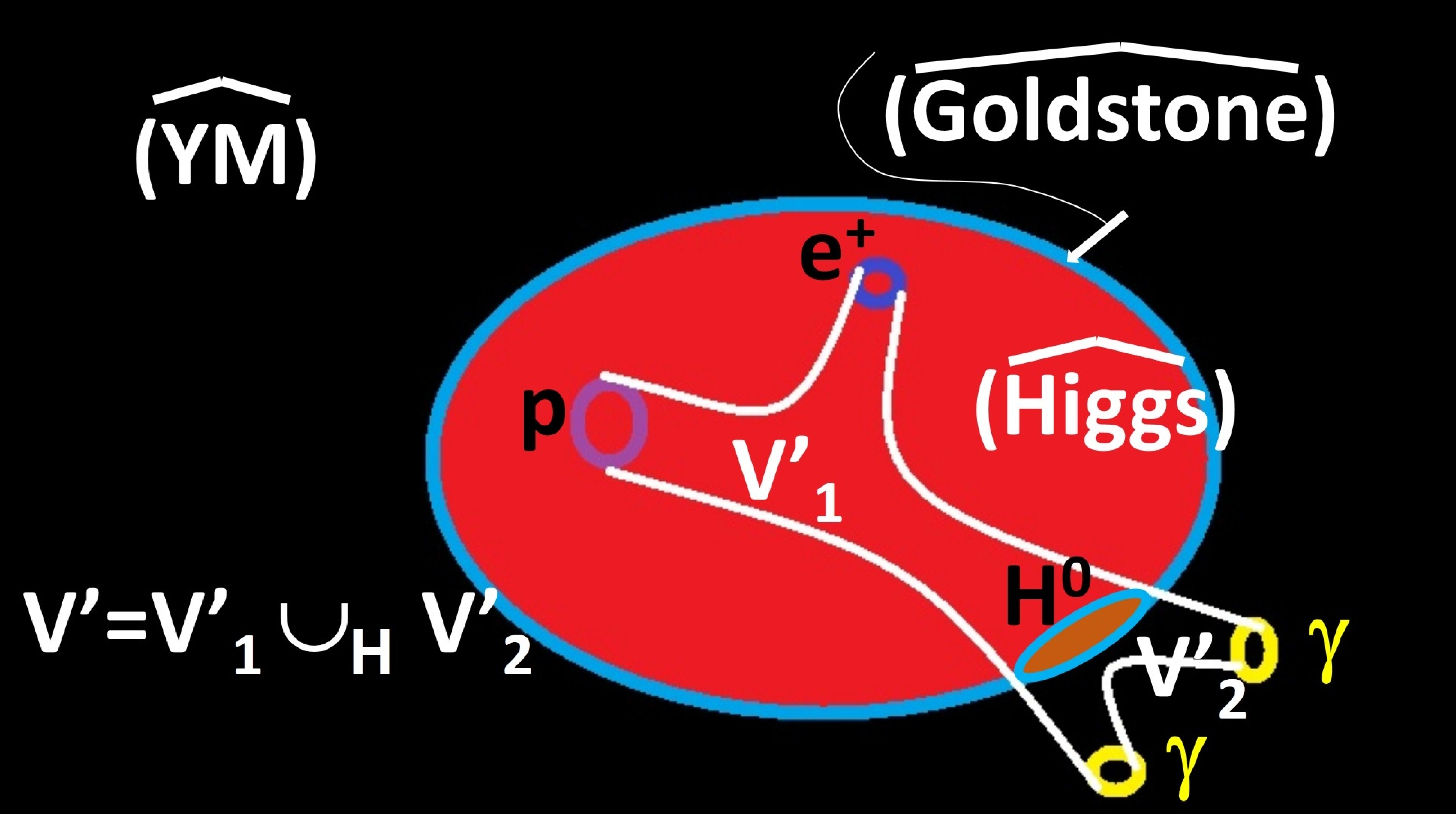}
\renewcommand{\figurename}{Fig.}
\caption{Representations of quantum proton decays, where $G'$ denotes a massive quantum graviton and $H^0$ is a massive quantum Higgs particle. More precisely $G'$ is considered a massive quantum bound state of $\{\gamma,\gamma\}$, with collective spin $s=2$, and $H^0$ is a massive quantum bound state of $\{\gamma,\gamma\}$, with collective spin $s=0$.}
\label{proton-decay-and-quantum-graviton-higgs}
\end{figure}

\section{\bf Quantum thermodynamic-exotic solutions in $\widehat{(YM)}[i]$}\label{sec-quantum-thermodynamic-exotic-solutions}

In this section we shall prove that $\widehat{(YM)}[i]$ admits solutions with negative local temperature. This is a quantum effect, classically strictly forbidden.\footnote{This exotic quantum phenomenon has been supported by some experimental confirmations. (See, e.g. \cite{CARR, PURCELL-POUND, SCHNEIDER}.) See also \cite{EDWARDS-TAYLOR}.}
We have the following theorem.

\begin{theorem}[Quantum thermodynamic-exotic solutions existence]\label{existence-quantum-thermodynamic-exotic-solutions}
$\widehat{(YM)}[i]$ admits {\em quantum thermodynamic-exotic solutions}, i.e., solutions that have negative temperature $\theta=(\partial s.e)$, where $e$ is the local interior energy and $s$ is the local entropy of such solutions respectively.
\end{theorem}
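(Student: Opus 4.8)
The plan is to realize such solutions inside the large solution space guaranteed by the complete integrability of $\widehat{(YM)}[i]$, exploiting the same time-like-boundary mechanism that in Theorem \ref{zero-lost-quantum-electric-charge} produces a non-zero lost quantum charge. First I would attach to every solution of class $Q^\omega_w$ a pair of local thermodynamic observables: the local interior energy $e$, which I take to be the observed quantum Hamiltonian already introduced in Part I (see Theorem 3.20 in \cite{PRAS28}(I)), and a local entropy $s$, defined as the logarithm of the quantum statistical weight carried by the internal $\mathfrak{g}$-valued degrees of freedom of the field components $\widetilde{\mu}^K_\alpha$. Both $e$ and $s$ are then quantum-valued functions with values in the center $Z$ of the fundamental superalgebra $A$, and the local temperature $\theta=(\partial s.e)$ is the derivative of $e$ along $s$, evaluated on the solution.

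The key observation is that the internal gauge degrees of freedom, taking values in the finite-dimensional Lie superalgebra $\mathfrak{g}$ with $\dim_A\mathfrak{g}=(r|s)$, carry a \emph{bounded} energy spectrum. Hence, exactly as in the quantum statistical mechanics of bounded, spin-type systems, the entropy $s$, regarded as a function of $e$, is \emph{non-monotone}: it increases from the ground configuration, attains a maximum at the equidistributed, infinite-temperature configuration, and then \emph{decreases} along the population-inverted branch. On this upper branch $(\partial e.s)<0$, hence $\theta=(\partial s.e)<0$. It therefore suffices to exhibit a genuine solution of $\widehat{(YM)}[i]$ whose field components $\widetilde{\mu}^K_\alpha$ lie, on an open subset of $N$, in the population-inverted configuration.

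Here I would invoke the complete integrability established in Theorem \ref{zero-lost-quantum-electric-charge}: since $\widehat{(YM)}[i]$ is formally integrable, $\delta$-regular and of class $Q^\omega_w$, through every point $q\in\widehat{(YM)}[i]$ passes an analytic solution, and the freedom in the symbol $(\widehat{g}[i])_{+1}$, of quantum dimension $(0,(r|s)64)$, is ample enough to prescribe admissible Cauchy data $N_0$ realizing any pointwise configuration of the internal degrees of freedom, in particular a population-inverted one on an open region. Propagating such data by a quantum nonlinear propagator $V$ — which exists by complete integrability and which, by Theorem \ref{exotic-quantum-supergravity}, may be chosen with non-zero $\widetilde{R}^{j\alpha}_K$, so that energy is exchanged across the time-like part $P$ of $\partial V$ — produces a solution on whose transversal sections $e$ and $s$ vary in opposite senses. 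On these sections $\theta=(\partial s.e)<0$, which proves the existence of quantum thermodynamic-exotic solutions.

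The main obstacle I anticipate is not the existence of the propagator, which is delivered by complete integrability, but the intrinsic definition of the entropy $s$ as a genuine quantum datum and the verification that the $e$--$s$ relation realized along the solution is a bona fide non-equilibrium thermodynamic branch, and not merely a coordinate artifact. This compels one to read the negative-temperature solutions as \emph{non-equilibrium} quantum configurations, in agreement with the fact that — as in the bounded spin systems where negative temperatures have been experimentally observed — such states sit above the maximal-entropy configuration and are metastable. Framing them in this way also makes transparent their connection, used in Theorem \ref{quantum-entanglement-thermodynamic-exotic-solutions}, with exotic quantum entanglement through the boundary term $\langle\omega,P\rangle$.
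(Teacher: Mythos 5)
Your proposal does not follow the paper's route, and as it stands it contains a genuine gap rather than being a complete alternative proof. The paper does not derive negative temperature from any spectral or combinatorial property of the field content. Instead it attaches the thermal function $\beta$ (equivalently $\theta$) to $\widehat{(YM)}[i]$ as an \emph{extra fiber coordinate} of a $1$-dimensional differential covering ${}^{\Theta}\widehat{(YM)}[i]$ of $\widehat{(YM)}[i]_{+\infty}$ (Lemma \ref{thermodynamics-covering-ym}), so that temperature becomes a continuous function $\theta:{}^{\Theta}\widehat{(YM)}[i]\to\mathbb{R}$ on the covering equation. The locus ${}^{\bigcirc}\widehat{(YM)}[i]=\theta^{-1}(\mathbb{R}^-)$ is then \emph{open} in ${}^{\Theta}\widehat{(YM)}[i]$, hence inherits formal and complete integrability, so a solution passes through every one of its points; projecting by ${}^{\Theta}\pi$ gives the required quantum thermodynamic-exotic solutions of $\widehat{(YM)}[i]$. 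Existence is thus an immediate consequence of openness plus complete integrability, with no dynamical input whatsoever.

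Your argument, by contrast, needs two facts that you do not establish. First, the population-inversion mechanism requires the relevant energy spectrum to be bounded above; you assert this because $\dim_A\mathfrak{g}=(r|s)$ is finite, but finite quantum dimension over the noncommutative algebra $A$ does not bound the spectrum of the observed quantum Hamiltonian $\widetilde{H}$ --- indeed the paper's own discussion (Example \ref{non-equilibrium-thermodynamic-processes}) works with the semidefiniteness of $\widetilde{H}$ and the convergence of $Z=\TR(e^{-\beta\widetilde{H}})$, not with boundedness of the spectrum. Second, and as you yourself flag, your entropy $s$ is introduced ad hoc as a microcanonical count, whereas in the paper's framework $s=\kappa_B(\ln Z+\beta e)$ is built from the partition function living on the thermodynamic covering; without identifying your $s$ with a datum actually carried by solutions, the inequality $(\partial s.e)<0$ on the transversal sections of your propagator $V$ is not a conclusion but a restatement of what is to be proved. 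The final step (``$e$ and $s$ vary in opposite senses along $V$'') is precisely the dynamical claim that the paper's covering construction is designed to avoid having to prove.
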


\begin{proof}
In Refs. \cite{PRAS22} we have proved that thermodynamics of the observed quantum super Yang-Mills PDE is encoded on the fiber bundle $\widetilde{W[i]}\equiv E[i]\times_NT^0_0N\cong
E[i]\times\mathbb{R}$, over $N$, whose
sections $(\widetilde{\mu},\beta)$ over $N$, represent an {\em observed
quantum fundamental field}, $\widetilde{\mu}$, and a function $\beta:N\to \mathbb{R}$,
{\em thermal function}. If $\beta=\frac{1}{\kappa_B\theta}$,
where $\kappa_B$ is the Boltzmann constant and $\theta$ is the
temperature, then the observed solution encodes a system in
equilibrium with a heat bath. The situation is resumed in Tab. \ref{local-thermodynamics-functions}.

\begin{table}[h]
\caption{Local thermodynamics functions of
{\boldmath$\scriptstyle
\widehat{(YM)}[i]$} solutions.}
\label{local-thermodynamics-functions}
\begin{tabular}{|l|l|l|l|}
\hline
\hfil{\rm{\footnotesize Name}}\hfil&\hfil{\rm{\footnotesize Definition}}\hfil &\hfil{\rm{\footnotesize Remark}}\hfil&\hfil{\rm{\footnotesize Order}}\hfil\\
\hline \hline
{\rm{\footnotesize partition function}}&{\rm{\footnotesize $Z=\TR(e^{-\beta\widetilde{H}})$}}&
{\rm{\footnotesize $Z=Z(\beta,\widetilde{\mu}^K_\alpha)$}}&\hfil{\rm{\footnotesize $1$}}\hfil\\
\hline
{\rm{\footnotesize interior energy}}&{\rm{\footnotesize $e=-(\partial\beta.\ln Z)$}}&{\rm{\footnotesize
$e=\kappa_B\, \theta^2\, (\partial\theta.\ln Z)$}}&\hfil{\rm{\footnotesize $1$}}\hfil\\
\hline
{\rm{\footnotesize fluctuation interior energy}}&{\rm{\footnotesize $<(\triangle E)^2>=<(E-e)^2>$}}&{\rm{\footnotesize
$<(\triangle E)^2>=(\partial\beta\partial \beta.\ln Z)$}}&\hfil{\rm{\footnotesize $2$}}\hfil\\
\hline
{\rm{\footnotesize entropy}}&{\rm{\footnotesize $s=\kappa_B\, (\ln Z+\beta e)$}}&{\rm{\footnotesize
$s=(\partial\theta.(\kappa_B\, \ln Z))$}}&\hfil{\rm{\footnotesize $1$}}\hfil\\
\hline
{\rm{\footnotesize free energy}}&{\rm{\footnotesize $f=e-\theta s$}}&{\rm{\footnotesize $f=-\kappa_B\, \theta\ln Z$}}&\hfil{\rm{\footnotesize $1$}}\hfil\\
\hline
\multicolumn {3}{l}{\rm{\footnotesize It is assumed a Lagrangian of first derivation order.}}\\
\end{tabular}
\end{table}
We shall use the following lemma.
\begin{lemma}[Thermodynamics covering of {$\widehat{(YM)}[i]$}]\label{thermodynamics-covering-ym}
Thermodynamics for solutions of $\widehat{(YM)}[i]$ is encoded by a $1$-dimensional differential covering of $\widehat{(YM)}[i]$.
\end{lemma}

\begin{proof}
Let us consider the infinite prolongation $\widehat{(YM)}[i]_{+\infty}\subset J\hat D^{\infty}(E[i])$ of $\widehat{(YM)}[i]$. This is endowed with a $4$-dimensional (completely integrable) distribution $\mathbf{E}_{\infty}[i]\subset T\widehat{(YM)}[i]_{+\infty}$ ({\em Cartan distribution}). A $r$-dimensional differential covering of $\widehat{(YM)}[i]_{+\infty}$ is a bundle, $\tau:\widetilde{\mathcal{E}}\to \widehat{(YM)}[i]_{+\infty}$, of finite rank $r$, such that on $\widetilde{\mathcal{E}}$ is defined a distribution $\mathbf{E}_{\tau}\subset T\widetilde{\mathcal{E}}$, such that $[\mathbf{E}_{\tau},\mathbf{E}_{\tau}]\subset\mathbf{E}_{\tau}$ and $\tau_{*}=T(\tau)|_{\mathbf{E}_\tau}:\mathbf{E}_\tau\to\mathbf{E}_{\infty}[i]$ induces an isomorphism between the corresponding fibers, i.e., one has the exact and commutative diagram (\ref{exact-commutative-diagram-covering}).
\begin{equation}\label{exact-commutative-diagram-covering}
\xymatrix{0\ar[r]&\mathbf{E}_{\tau}\ar[dr]\ar[rr]^{\tau_*}&&\tau^*\mathbf{E}_{\infty}[i]\ar[dl]\ar[r]&0\\
&&\widetilde{\mathcal{E}}\ar[d]&&\\
&&0&&\\}
\end{equation}
If the distribution $\mathbf{E}_{\infty}[i]$ is locally spanned by vector fields $\zeta_\alpha$, $\alpha\in\{0,1,2,3\}$, then there are vector fields $\widetilde{\zeta}_\alpha$, $\alpha\in\{0,1,2,3\}$, on $\widetilde{\mathcal{E}}$ such that $\tau_*(\widetilde{\zeta}_\alpha)=\zeta_\alpha$ and $[\widetilde{\zeta}_\alpha,\widetilde{\zeta}_\beta]=0$, $\forall\alpha,\, \beta\in\{0,1,2,3\}$. If $\nu$ is a gauge infinitesimal symmetry of the covering $\tau:\widetilde{\mathcal{E}}\to\widehat{(YM)}[i]_{+\infty}$, i.e., a $\tau-$vertical vector field $\nu:\widetilde{\mathcal{E}}\to T\widetilde{\mathcal{E}}$, such that $[\nu,\mathbf{E}_\tau]\subset \mathbf{E}_\tau$, then one has $[\nu,\widetilde{\zeta}_\alpha]=0$, $\forall\alpha\in\{0,1,2,3\}$. Let $\{w^j\}_{1\le j\le r}$ be local vertical coordinates of $\tau:\widetilde{\mathcal{E}}\to\widehat{(YM)}[i]_{+\infty}$. Then the local characterization of $\widetilde{\zeta}_\alpha$ is given in (\ref{local-characterization-covering-distribution}).

\begin{equation}\label{local-characterization-covering-distribution}
  \widetilde{\zeta}_\alpha=\zeta_\alpha+\nu_\alpha\, \left\{
  \begin{array}{ll}
  \hbox{\rm(a)}&\nu_\alpha=\sum_{1\le j\le r}a_\alpha^j\, \partial w_j\\
  &\\
\hbox{\rm(b)}&{[\nu_\alpha,\nu_\beta]}+\zeta_\alpha\nu_\beta-\zeta_\beta\nu_\alpha=0.\\
  \end{array}
  \right.
\end{equation}
(When $\widetilde{\zeta}_\alpha=\zeta_\alpha$ the covering is called a {\em trivial covering}.)

The local expression of $\widetilde{\mathcal{E}}$ is given by the set of equations reported in (\ref{local-expression-covering-pde}).\footnote{In (\ref{local-expression-covering-pde}) and (\ref{thermal quantum-super-yang-mills-pde}), $\widetilde{E}^{K\omega}_{(\gamma)}$ denotes all the prolongations of $\widetilde{E}^{K\omega}$.}
\begin{equation}\label{local-expression-covering-pde}
  \left\{
  \begin{array}{l}
    \hbox{\rm($\widehat{(YM)}[i]_{+\infty}$)}\, :\hskip 3pt \left\{
                                                       \begin{array}{l}
                                                          \widetilde{E}^{K\beta}\equiv(\partial_{\alpha}.\tilde R^{K\alpha\beta})+[\widehat C^K_{IJ}\tilde\mu^I_{\alpha},\tilde R^{J\alpha\beta}]_+=0\\
                                                        \tilde
R^K_{\alpha_1\alpha_2}=(\partial\xi_{[\alpha_1}.\tilde\mu^K_{\alpha_2]})+
\frac{1}{2}\widehat{C}{}^K_{IJ}\tilde\mu^I_{[\alpha_2}\tilde\mu^J_{\alpha_1]}\\
\widetilde{E}^{K\beta}_{(\gamma)}=0,\, (|\gamma|>0)\\
                                                       \end{array}
                                                       \right\}\\
                                                       \\
    (\partial\xi_\alpha.w^j) =a^j_\alpha(\xi^\beta,w^i,\mu^I_{\beta\sigma}).\\
  \end{array}
  \right.
\end{equation}
This overdetermined system is consistent iff condition (\ref{local-characterization-covering-distribution})(b) holds on $\widehat{(YM)}[i]_{+\infty}$.

From results in \cite{PRAS22}, \cite{PRAS29} and above considerations, it follows that thermodynamics of $\widehat{(YM)}[i]$ is encoded by $\widehat{(YM)}[i]\times\mathbb{R}$ that identifies a $1$-dimensional covering $\widehat{(YM)}[i]_{+\infty}\times\mathbb{R}$ of $\widehat{(YM)}[i]_{+\infty}$. Its local expression is reported in (\ref{thermal quantum-super-yang-mills-pde}).

\begin{equation}\label{thermal quantum-super-yang-mills-pde}
  \left\{
  \begin{array}{l}
    \hbox{\rm($\widehat{(YM)}[i]_{+\infty}$)}\, :\hskip 3pt \left\{
                                                       \begin{array}{l}
                                                          \widetilde{E}^{K\omega}\equiv(\partial_{\alpha}.\tilde R^{K\alpha\omega})+[\widehat C^K_{IJ}\tilde\mu^I_{\alpha},\tilde R^{J\alpha\omega}]_+=0\\
                                                        \tilde
R^K_{\alpha_1\alpha_2}=(\partial\xi_{[\alpha_1}.\tilde\mu^K_{\alpha_2]})+
\frac{1}{2}\widehat{C}{}^K_{IJ}\tilde\mu^I_{[\alpha_2}\tilde\mu^J_{\alpha_1]}\\
                                                      \widetilde{E}^{K\omega}_{(\gamma)}=0,\, (|\gamma|>0)\\
                                                        \end{array}\right\}\\
                                                        \\
(\partial\xi_\alpha.\beta) =a_\alpha(\xi^\omega,\beta,\mu^I_{\omega\sigma})\\
    {[\nu_\alpha,\nu_\omega]}+\zeta_\alpha\nu_\omega-\zeta_\omega\nu_\alpha=0\\
    \nu_\alpha=a_\alpha\, \partial \beta.\\
  \end{array}
  \right.
\end{equation}

 Let us now emphasize that there is a canonical $1$-dimensional covering of $\widehat{(YM)}[i]$, that we will denote by  ${}^{\Theta}\widehat{(YM)}[i]$ and that we call {\em thermal quantum super Yang-Mills} PDE. This equation is defined by the commutative diagram (\ref{thermal-quantum-super-yang-mills-commutative-diagram}). There the top horizontal line is exact too.

 \begin{equation}\label{thermal-quantum-super-yang-mills-commutative-diagram}
    \scalebox{0.6}{$\xymatrix{&0&\widehat{(YM)}[i]_{+\infty}\ar[l]\ar@{=}[d]&{}^{\Theta}\widehat{(YM)}[i]\ar[l]\ar@{=}[d]&&\\
    &J\hat D^{\infty}(E[i])\ar[d]&\widehat{(YM)}[i]_{+\infty}\ar@{_{(}->}[l]\ar[d]&\widehat{(YM)}[i]_{+\infty}\times_NJ\hat D^{\infty}(T^0_0N)\ar[l]\ar[d]\ar@{^{(}->}[r]&J\hat D^{\infty}(\widetilde{W[i]})\ar[d]\ar@{=}[r]^(0.35)\sim&J\hat D^{\infty}(E[i])\times_NJ\hat D^{\infty}(T^0_0N)\ar[d]\\
    &J\hat D^{2}(E[i])\ar[d]&\widehat{(YM)}[i]\ar@{_{(}->}[l]\ar[d]&\ar[l]\widehat{(YM)}[i]\times_NJ\hat D^{2}(T^0_0N)\ar[d]\ar@{^{(}->}[r]&J\hat D^{2}(\widetilde{W[i]})\ar[d]\ar@{=}[r]^(0.35)\sim&J\hat D^{2}(E[i])\times_NJ\hat D^{2}(T^0_0N)\ar[d]\\
     0&E[i]\ar[l]\ar[d]\ar@{=}[r]&E[i]\ar[d]&E[i]\times_NT^0_0N\ar[l]\ar[d]\ar@{=}[r]&\widetilde{W[i]}\ar[d]\ar@{=}[r]&\widetilde{W[i]}\ar[d]\\
     &N\ar[d]\ar@{=}[r]&N\ar[d]\ar@{=}[r]&N\ar[d]\ar@{=}[r]&N\ar[d]\ar@{=}[r]&N\ar[d]\\
    &0&0&0&0&0\\}$}
 \end{equation}

\end{proof}

Now, temperature identifies a continuous function $\theta:{}^{\Theta}\widehat{(YM)}[i]\to\mathbb{R}$. Set
\begin{equation}\label{exotic-quantum-thermodynamic-ym-pde}
  {}^{\bigcirc}\widehat{(YM)}[i]=\theta^{-1}(\mathbb{R}^{-})\subset {}^{\Theta}\widehat{(YM)}[i]
\end{equation}
where $\mathbb{R}^{-}=\{\lambda\in\mathbb{R}\, |\, \lambda<0\}\subset\mathbb{R}$ is an open set in $\mathbb{R}$. We call ${}^{\bigcirc}\widehat{(YM)}[i]$ the {\em exotic-thermal quantum super Yang-Mills} PDE. It is also formally integrable and completely integrable, since ${}^{\Theta}\widehat{(YM)}[i]$ is so and ${}^{\bigcirc}\widehat{(YM)}[i]$ is open therein. Therefore, for any initial condition $q\in {}^{\bigcirc}\widehat{(YM)}[i]$ passes some solution of ${}^{\bigcirc}\widehat{(YM)}[i]$. We call {\em quantum thermal-exotic solutions} such solutions. Since one has a natural projection ${}^{\Theta}\pi:{}^{\Theta}\widehat{(YM)}[i]\to \widehat{(YM)}[i]$, we get that also a natural sub-equation ${}^{\circ}\widehat{(YM)}[i]={}^{\Theta}\pi({}^{\bigcirc}\widehat{(YM)}[i])\subset \widehat{(YM)}[i]$. We call {\em exotic-thermodynamics quantum super Yang-Mills} PDE ${}^{\circ}\widehat{(YM)}[i]$. Therefore a quantum thermal-exotic solutions $V\subset {}^{\bigcirc}\widehat{(YM)}[i]$ identifies a solution $V'={}^{\Theta}\pi(V)\subset {}^{\circ}\widehat{(YM)}[i]\subset \widehat{(YM)}[i]$, that we call {\em quantum thermodynamic-exotic solution} in $\widehat{(YM)}[i]$. From above considerations, it follows that the set of such solutions is not empty. Therefore, theorem is done.
\end{proof}

\begin{example}\label{non-equilibrium-thermodynamic-processes}
Negative (absolute) temperature is related to the quantum structure of the physical system, encoded by its local observed quantum Hamiltonian $\widetilde{H}(p)$. As the local partition function $Z$ can be interpreted as a normalization factor for the local probability density $P(E)=\frac{1}{Z}N(E)e^{-\beta E}$, \cite{PRAS22, PRAS28, PRAS29}, it follows that must be $0<Z<\infty$. Since $Z=\TR(e^{-\beta\widetilde{H}(p)})$, the trace converges in general when $\beta\widetilde{H}(p)$ is positive semidefinite. Therefore, if $\widetilde{H}(p)$ is negative semidefinite, then $\beta$ must be negative. So, when $\theta=\frac{1}{\kappa_B\beta}$, we must necessarily have $\theta<0$. On the other hand if the fundamental quantum algebra $A$ is a $C^*$-algebra, and $\widetilde{H}(p)$ is a self adjoint element of $A$, we can use its {\em Jordan decomposition} to split it into a linear combination of positive elements of $A$. More precisely we can write $\widetilde{H}(p)=\widetilde{H}(p)_+-\widetilde{H}(p)_-$, where $\widetilde{H}(p)_+=(|\widetilde{H}(p)|+\widetilde{H}(p))/2$, $\widetilde{H}(p)_-=(|\widetilde{H}(p)|-\widetilde{H}(p))/2$ and $|\widetilde{H}(p)|=\widetilde{H}(p)_++\widetilde{H}(p)_-$. Since $[\widetilde{H}(p)_+,\widetilde{H}(p)_-]=0$, we can write $Sp(\widetilde{H}(p))=Sp(\widetilde{H}(p)_+)\oplus Sp(-\widetilde{H}(p)_-)$.\footnote{Let us recall that for two commuting elements $a,\, b\in A$ of a Banach algebra (and therefore also for a $C^*$-algebra) $A$, one has the following properties relating their spectra: (a) $Sp(a+b)=Sp(a)\oplus Sp(a)$; (b) $Sp(a.b)\subset Sp(a).Sp(b)$; (c) $r(a+b)\le r(a)+r(b)$; (d) $r(a).r(b)\le r(a).r(b)$; (e) $r(1)=1$; (f) $r(\lambda a)=|\lambda|r(a)$. ($r(a)$ denotes the spectral radius of $a\in A$.)} Therefore, we get
\begin{equation*}
    \begin{array}{ll}
     Z(\beta)&=\int_{Sp(\widetilde{H}(p))}N(E)e^{-\beta E}dE\\
     &=\int_{Sp(\widetilde{H}(p)_+)}N(E)e^{-\beta E}dE-\int_{Sp(\widetilde{H}(p)_-)}N(E)e^{-\beta E}dE\\
     &=\TR(e^{-\beta \widetilde{H}(p)_+})-\TR(e^{-\beta \widetilde{H}(p)_-}).
    \end{array}
\end{equation*}
For the convergence of traces it is necessary that $\beta \widetilde{H}(p)_{\pm}$ should be semidefinte positive. Since both $\widetilde{H}(p)_+$ and $\widetilde{H}(p)_-$ are positive, it follows that must be $\beta\ge 0$. Thus if the system is in equilibrium with a heat bath, namely $\theta=\frac{1}{\kappa_B\beta}$, for the temperature one should have $\theta\ge 0$. Therefore, in such cases quantum thermodynamic-exotic solutions cannot be obtained in thermodynamic equilibrium states, but only as non-equilibrium thermodynamic states.
\end{example}

\begin{example}\label{laser-example}
We can apply Theorem \ref{existence-quantum-thermodynamic-exotic-solutions} to laser systems. In fact, a laser system works just thanks to existence of quantum thermodynamic-exotic solutions.\footnote{For general informations on such systems see e.g., \href{http://en.wikipedia.org/wiki/Negative_temperature}{\tt Wikipedia/Negative-temperature} and references quoted therein.}
\end{example}
We can generalize above definition of quantum thermodynamic-exotic solution by the following one.
\begin{definition}\label{quantum-thermodynamic-exotic-solution}
We call {\em quantum thermodynamic-exotic solution} in $\widehat{(YM)}[i]$ a solution $V$ such that $V\bigcap {}^{\circ}\widehat{(YM)}[i]\equiv {}^{\circ}V\not=\varnothing$. Then we call also ${}^{\circ}V\subset V$ the {\em quantum thermodynamic-exotic component} of $V$.
\end{definition}

\begin{theorem}[Quantum entanglement and quantum thermodynamic-exotic solutions]\label{quantum-entanglement-thermodynamic-exotic-solutions}
$\widehat{(YM)}[i]$ admits solutions that support quantum entanglement and quantum thermodynamic-exotic solutions.
\end{theorem}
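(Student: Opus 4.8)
The plan is to obtain the statement by superposing the two independent existence mechanisms already established: the thermodynamic-exotic solutions produced in Theorem \ref{existence-quantum-thermodynamic-exotic-solutions} and the entanglement phenomenon of Theorems \ref{quantum-entanglement-in-exotic-quantum supergravity-free-solutions} and \ref{exotic-quantum-entanglement}. The crucial observation is that entanglement is not an extra condition to be engineered but an automatic feature of every quantum nonlinear propagator whose boundary splits as $\partial V=N_0\sqcup P\sqcup N_1$: the conservation-law identity $<N_0,\omega>-<N_1,\omega>=<\omega,P>\in A$ forces the quantum numbers of $N_0$ and $N_1$ to be instantaneously correlated for every quantum conservation law $\omega$. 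Hence it suffices to exhibit a thermodynamic-exotic solution that can be realized as such a propagator.

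First I would extract, from the proof of Theorem \ref{existence-quantum-thermodynamic-exotic-solutions}, a nonempty quantum thermal-exotic solution of ${}^{\bigcirc}\widehat{(YM)}[i]=\theta^{-1}(\mathbb{R}^{-})$. Because ${}^{\bigcirc}\widehat{(YM)}[i]$ is open in the formally and completely integrable thermal equation ${}^{\Theta}\widehat{(YM)}[i]$, through every initial condition $q\in{}^{\bigcirc}\widehat{(YM)}[i]$ there passes a solution, whose projection under ${}^{\Theta}\pi$ yields a quantum thermodynamic-exotic solution $V\subset\widehat{(YM)}[i]$ with nonempty exotic component ${}^{\circ}V\not=\varnothing$ in the sense of Definition \ref{quantum-thermodynamic-exotic-solution}.

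Second I would realize such a $V$ as a genuine quantum nonlinear propagator. Using the formal and complete integrability of ${}^{\bigcirc}\widehat{(YM)}[i]$ together with Pr\'astaro's correspondence between formal integrability and the smooth closed integral bordism groups (\cite{PRAS11, PRAS14, PRAS21, PRAS22}, \cite{PRAS28}), one selects $3$-dimensional space-like admissible Cauchy data $N_0,N_1$ and a time-like $P$ with $\partial P=\partial N_0\sqcup\partial N_1$, whose bordism is a propagator $V$ with $\partial V=N_0\sqcup P\sqcup N_1$ and ${}^{\circ}V\not=\varnothing$. Applying then Theorem \ref{exotic-quantum-entanglement} (or Theorem \ref{quantum-entanglement-in-exotic-quantum supergravity-free-solutions} when $V\subset\widetilde{\widehat{(YM)}[i]_\bullet}$) gives that $N_0$ and $N_1$ are (exotic) quantum entangled, so the same solution simultaneously supports quantum entanglement and carries a thermodynamic-exotic component, which is the assertion.

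The hard part will be step two: guaranteeing that the negative-temperature component ${}^{\circ}V$ survives inside an admissible propagator boundary rather than being destroyed once the bordism decomposition is imposed. This reduces to showing that the relevant integral bordism group of ${}^{\bigcirc}\widehat{(YM)}[i]$ admits the required class, which in turn follows from the openness of ${}^{\bigcirc}\widehat{(YM)}[i]$ in ${}^{\Theta}\widehat{(YM)}[i]$ and the fact, already noted in the proof of Theorem \ref{existence-quantum-thermodynamic-exotic-solutions}, that an open sub-equation of a formally and completely integrable equation inherits both properties. One then only needs to verify that the Cauchy data can be chosen entirely within $\theta^{-1}(\mathbb{R}^{-})$, so that ${}^{\circ}V$ is forced to be nonempty while the conservation-law pairing continues to couple $N_0$ and $N_1$.
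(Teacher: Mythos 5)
Your proposal follows essentially the same route as the paper: combine the existence result of Theorem \ref{existence-quantum-thermodynamic-exotic-solutions} with the automatic entanglement of any propagator boundary $\partial V=N_0\sqcup P\sqcup N_1$, and then realize a thermodynamic-exotic solution as such a propagator via the Part II bordism criterion for Cauchy data. The only difference is in the last verification: where you argue via openness of ${}^{\bigcirc}\widehat{(YM)}[i]$ and choose both Cauchy data in $\theta^{-1}(\mathbb{R}^{-})$, the paper's decisive (and slightly sharper) observation is that temperature is not a quantum conservation law, so the bordism conditions place no obstruction on taking $N_0\subset{}^{\circ}\widehat{(YM)}[i]$ and $N_1$ outside it.
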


\begin{proof}
Let $V\subset \widehat{(YM)}[i]$ be a nonlinear quantum propagator such that $\partial V=N_0\sqcup P\sqcup N_1$, with $N_0\subset {}^{\circ}\widehat{(YM)}[i]$, beside a neighborhood ${}^{\circ}V$ of $V$. Then, from Section \ref{sec-quantum-entanglement-in-quantum-nonlinear-propagators} we know that $N_0$ and $N_1$ are quantum entangled particles, and $V$ is a quantum exotic-thermodynamic solution of $\widehat{(YM)}[i]$, according to Definition \ref{quantum-thermodynamic-exotic-solution}. Now the question is the following. Do exist such solutions ?
Let $N_0\subset \widehat{(YM)}[i]$ and $N_1\subset \widehat{(YM)}[i]$ be Cauchy data such that the following conditions are verified. (i) There exists an integral quantum super manifold $P\subset \widehat{(YM)}[i]$ such that $\partial P=\partial N_0\sqcup\partial N_1$. (In other words $\partial N_0$ and $\partial N_1$ belong to the same integral bordism class of $\widehat{(YM)}[i]\subset \hat J^2_4(E[i])$.)
(ii) $<\alpha,N_0>+<\alpha,N_1> =-<\alpha,P>$, for any quantum conservation law $\alpha$ of $\widehat{(YM)}$.
Then, according to some previous results contained in Part II, there exists a solution $V\subset \widehat{(YM)}[i]$ such that $\partial V=N_0\sqcup P\sqcup N_1$. (For details see \cite{PRAS19}(II).) On the other hand temperature is not a conservation law for $\widehat{(YM)}[i]$, hence we can satisfy above condition (i) with $N_0\subset {}^{\circ}\widehat{(YM)}[i]$ and $N_1\subset \widehat{(YM)}[i]\setminus{}^{\circ}\widehat{(YM)}[i]$.  Therefore the proof is done.
\end{proof}

\begin{example}[Asymmetry matter-antimatter and non-equilibrium thermodynamic processes in a quantum universe encoded by a quantum thermodynamic-exotic solution in {$\widehat{(YM)}[i]$}]\label{asymmetry-matter-antimatter-in-quantum-universe}
 A fashionable research subject is to justify the experimental observations that in our Universe the concentration of antimatter is substantially zero. Even if antiparticles are produced everywhere in the Universe in high-energy collisions, these antiparticle jets are annihilated by contact with matter, hence their presence in the actual Universe remains very rare.\footnote{It is important to underline that, starting from 1965, antinuclei and antiatoms are artificially produced in some laboratories (CERN, Brookhaven National Laboratory, Fermilab).} This could be a very strange situation, taking into account that also the original big-bang should create from nothing the actual Universe. In fact, by using Einstein's General Relativity, and extrapolating the expansion of the Universe backwards in time, one arrives to an infinite density and temperature at a finite time in the past. This singularity can be removed by considering that at microscopic level one should go beyond General Relativity and use quantum-(super)gravity. Really the time of the big-bang can be called the {\em Planck-epoch}. But the classic quantum mechanics should foresee a couple universe-antiuniverse, hence the presence of particles and antiparticles, in order to conserve zero vacuum quantum numbers. With this respect some authors have conjectured that particle interactions could violate some classic conservation laws in particle physics, as baryon-number, C invariance and CP symmetry and could present breakdown of chemical equilibrium during Planck-epoch. (See, e.g., \cite{SAKHAROV}.) Our theory of quantum-supergravity can, now, give a precise mathematical justification to the observed asymmetry matter-antimatter and to non-equilibrium processes. In fact, in order to describe the Universe in the Planck-epoch one can use a quantum super Yang-Mills PDE, $\widehat{(YM)}$. There, the Universe can be encoded as a quantum black-hole, hence with a very huge mass. As such it should be represented by a Cauchy data $B$, in the sub-equation $\widehat{(Higgs)}\subset\widehat{(YM)}$, where live solutions with quantum mass. However the solution $V$, encoding such a quantum universe, should start from a point $\bigstar$ in $\widehat{(YM)}$, outside $\widehat{(Higgs)}$, hence without quantum-mass. Then during its evolution such a quantum universe, should be represented by a nonlinear quantum propagator $V$, that, after crossed the {\em Goldstone boundary}, acquires a quantum mass. There one can talk about matter and antimatter. Since in the Planck-epoch there is not necessarily request the conservation of charge, it follows that it does neither necessitate the conservation of the symmetry between matter and antimatter. Therefore, the nonlinear quantum propagator $V$, encoding such a quantum universe, should be a $q$-exotic nonlinear quantum propagator $V$, in the sense of Theorem \ref{non-conservation-of-quantum-electric-charge}, Definition \ref{exotic-nonlinear-quantum-propagator} and Theorem \ref{properties-exoticnonlinear-quantum-propagators}. Furthermore, assuming also that $V$ is contained into ${}^{\circ}\widehat{(YM)}[i]$, it should be also a quantum thermodynamic-exotic solution of $\widehat{(YM)}[i]$. Therefore, in such a quantum universe interactions do not necessarily respect chemical equilibrium. (See Theorem \ref{existence-quantum-thermodynamic-exotic-solutions}, Example \ref{non-equilibrium-thermodynamic-processes}, Example \ref{laser-example} and Definition \ref{quantum-thermodynamic-exotic-solution}.) In order to fix ideas, we give in Fig. \ref{figure-planck-epoch-universe} a picture representation of the singular nonlinear quantum propagator $V$ that starting from the point $\bigstar$, origin of the big-bang, bords with $B$, the Cauchy data representing a huge-massive quantum black-hole, after crossed the Goldstone boundary. There $V$ identifies a quantum massive-universe $B_H$, that we call {\em Higgs-universe}. $V$ bords $B_H$ with $B$ inside $\widehat{(Higgs)}$. Furthermore, let us yet denote by $V$ the projection of $V\subset\widehat{(YM)}$, into $\widehat{(YM)}[i]$, and by $\widehat{(Higgs)}[i]$ the projection of $\widehat{(Higgs)}$ into $\widehat{(YM)}[i]$.\footnote{For details on the projection of $\widehat{(YM)}$ on $\widehat{(YM)}[i]$, see diagram (2) and \cite{PRAS19}(II), where has explicitly considered the relation between a quantum super PDE and its observed with respect to a quantum relativistic frame.} Then, in order $V$ should bord $\bigstar$ with $B_H$ it is necessary that a quantum thermodynamic-exotic component should be present in $V$, in the sense of Definition \ref{quantum-thermodynamic-exotic-solution}. Furthermore, for the part of $V$ inside $\widehat{(Higgs)}[i]$ one could have  negative temperatures in non-equilibrium thermodynamic processes. Therefore, also in this case one should have  $V\bigcap{}^{\circ}\widehat{(YM)}[i]\not=\varnothing$.

\begin{figure}[h]
\includegraphics[width=5cm]{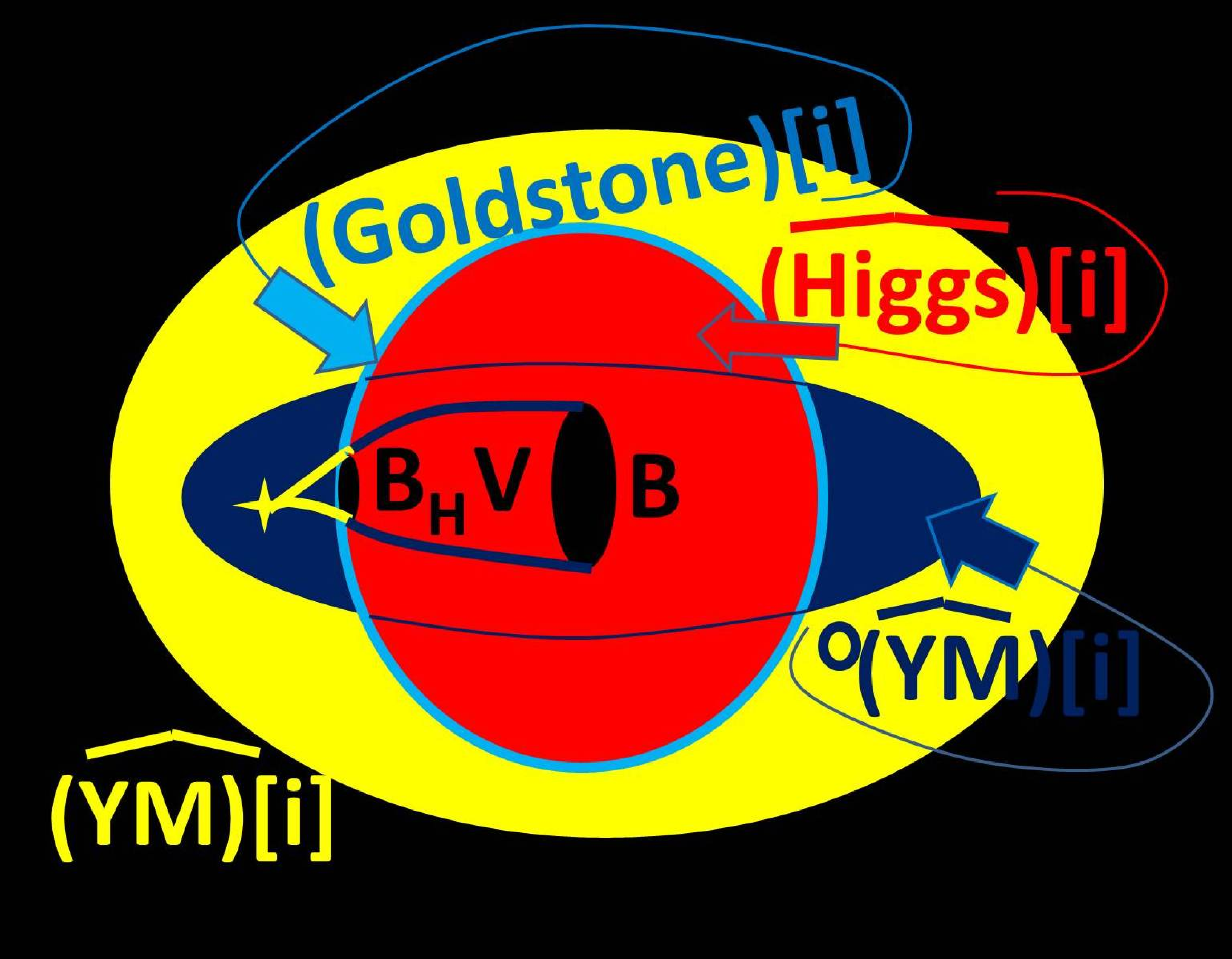}
\renewcommand{\figurename}{Fig.}
\caption{Picture representation of the singular nonlinear quantum propagator $V$ that starting from the point $\bigstar$, origin of the big-bang, bords with $B$, the Cauchy data representing a huge-massive quantum black-hole, after crossed the observed Goldstone boundary $\widehat{(Goldstone)}[i]$. There $V$ identifies a quantum massive-universe $B_H$, that we call {\em Higgs-universe}. $V$ bords $B_H$ with $B$ inside $\widehat{(Higgs)}$. Furthermore, $V$, or better its projection into $\widehat{(YM)}[i]$, is contained also in ${}^{\circ}\widehat{(YM)}[i]$ in order to allow non-equilibrium thermodynamic processes. In this picture, $\widehat{(YM)}[i]$ is represented by the yellow framework, $\widehat{(Higgs)}[i]$ is represented by a red region and ${}^{\circ}\widehat{(YM)}[i]$ is pictured as a blue region.}
\label{figure-planck-epoch-universe}
\end{figure}
\end{example}

\begin{definition}[Thermodynamic quantum exotic nonlinear propagator]\label{thermodynamic-quantum-exotic-nonlinear-propagator}
Let $V$ be a nonlinear quantum propagator in $\widehat{(YM)}[i]$, bording two quantum massive particles $B_1,\, B_2\subset\widehat{(Higgs)}[i]$, i.e., $\partial V=B_1\bigcup P\bigcup B_2$, such that the time $t_1$ of $B_1$ is different from the time $t_2$ of $B_2$. (Say $t_1<t_2$.) Then we say that $V$ is a {\em thermodynamic quantum exotic nonlinear propagator} if $<\omega_H,P>+\Delta H[i|t_1,t_2]_{00}\in A$ is positive definite, where $\Delta H[i|t_1,t_2]_{00}=H[i|t_2]_{00}-\Delta H[i|t_1]_{00}$ is defined in the sense of Equation (26)(I).
\end{definition}
\begin{proposition}\label{proposition-thermodynamic-quantum-exotic-nonlinear-propagator}
Under the hypotheses of Definition \ref{thermodynamic-quantum-exotic-nonlinear-propagator} we get that the quantum energy content of $B_2$ is greater than one of $B_1$
\end{proposition}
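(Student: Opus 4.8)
The plan is to run the same bordism-theoretic conservation-law computation as in the proof of Theorem \ref{zero-lost-quantum-electric-charge}, but with the observed quantum Hamiltonian in place of the quantum electric charge. Recall that the observed quantum Hamiltonian determines a quantum conservation law $\omega_H$ of $\widehat{(YM)}[i]$, namely a form closed on the solutions (this is the conservation of the observed quantum Hamiltonian, Theorem 3.20 in \cite{PRAS28}(I), already invoked in the footnote to Theorem \ref{non-conservation-of-quantum-electric-charge}). Since $V$ is a quantum nonlinear propagator, hence an integral (solution) manifold of $\widehat{(YM)}[i]$, the pairing $<\omega_H,V>$ is defined and $d\omega_H$ vanishes along $V$.

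First I would apply the quantum Stokes formula to $\omega_H$ over $V$. Fixing the orientation conventions exactly as in \S\ref{sec-constraint-zero-lost-quantum-electric-charge}, and using $\partial V=B_1\bigcup P\bigcup B_2$, closedness of $\omega_H$ on $V$ yields
\[ 0=<d\omega_H,V>=<\omega_H,\partial V>=<\omega_H,B_1>-<\omega_H,B_2>+<\omega_H,P>\in A, \]
whence
\[ <\omega_H,B_2>=<\omega_H,B_1>+<\omega_H,P>\in A. \]
By definition $<\omega_H,B_r>\in A$ is the quantum energy content of the space-like Cauchy datum $B_r$ at proper time $t_r$, $r=1,2$, with $t_1<t_2$; thus the energy content of $B_2$ differs from that of $B_1$ exactly by the flux $<\omega_H,P>$ of the energy current through the time-like wall $P$.

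The conclusion is then immediate from the defining hypothesis: by Definition \ref{thermodynamic-quantum-exotic-nonlinear-propagator} the element $<\omega_H,P>\in A$ is positive definite, so $<\omega_H,B_2>-<\omega_H,B_1>=<\omega_H,P>$ is positive definite and the quantum energy content of $B_2$ exceeds that of $B_1$.

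I expect the only genuine subtlety to lie in the meaning of the inequality, not in the algebra. Because the energies are elements of the fundamental quantum superalgebra $A$, both ``positive definite'' and ``greater than'' must be read in the order of the self-adjoint part of $A$ (viewed as a $C^*$-algebra, as in Example \ref{non-equilibrium-thermodynamic-processes}); one should also verify that the Stokes sign attached to $P$ is consistent with the orientations of $B_1$ and $B_2$, so that the flux through $P$ enters with the claimed $+$ sign. Once this orientation bookkeeping is settled, the statement is a one-line corollary of the energy conservation law carried by $\omega_H$.
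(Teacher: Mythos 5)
Your argument is essentially identical to the paper's: the paper simply quotes equation (24) of Part I, which is exactly the relation $H[i|t_2]=H[i|t_1]+<\omega_H|_P,P>$ that you re-derive by evaluating the closed conservation form $\omega_H$ on $\partial V=B_1\bigcup P\bigcup B_2$, and then both you and the paper conclude from the positive definiteness of $<\omega_H,P>$ required by Definition \ref{thermodynamic-quantum-exotic-nonlinear-propagator}. Your closing remarks on reading the inequality in the order structure of $A$ and on orientation conventions are sensible refinements of the same argument, not a different route.
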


\begin{proof}
With respect to equation (24) in part I, we get
\begin{equation}\label{equation-thermodynamic-quantum-exotic-nonlinear-propagator}
 H[i|t_2]_0=H[i|t_1]_0+<\omega_{H}|_{P},P>+\Delta H[i|t_1]_{00}\in A.
\end{equation}

Therefore, when $V$ is a thermodynamic quantum exotic nonlinear propagator we get
$H[i|t_2]_0>H[i|t_1]_0$, since whether $H[i|t_1]_0$ or $<\omega_{H}|_{P},P>+\Delta H[i|t_1]_{00}$ are positive defined.
\end{proof}

\section{\bf Quantum Geometrodynamic Cosmology}\label{sec-quantu-geometrodynamic-cosmology}

In this section we shall prove that the actual expansion of our Universe has its origin in the so-called big-bang, namely it is a natural geometrodynamic consequence of its structure at the Planck epoch. We shall prove that, thanks to this quantum origin,  the energy-matter content of our Universe continuously increases. So the dark-energy-matter conjectured from some cosmologists is nothing else that new real energy-matter produced as a consequence of the conservation of quantum energy at the Planck epoch first and at the Einstein epoch next.

\begin{theorem}\label{big-bang-and-thermodynamic-exotic-component-propagator}
Let us consider the quantum nonlinear bordism $V$ relating the point $\bigstar\in\underline{(YM)}[i]$, origin of the big-bang, with respect to the relativistic quantum frame $i:N\to M$, and the massive Universe $B\subset\widehat{(Higgs)}[i]$, passing for the Higgs-universe $B_H\subset\widehat{(Goldstone)}[i]$: $\partial V=\partial_1V\bigcup P\bigcup B$, where $\partial_1V$ is part of the boundary of $V$ not contained into $\widehat{(Higgs)}[i]$. Then $\partial_1V$ is contained into ${}^{\circ}\widehat{(YM)}[i]$. i.e., it is the boundary of the part $V_1\subset V$ contained into ${}^{\circ}\widehat{(YM)}[i]$, hence $V$ has a thermodynamic quantum exotic component.
\end{theorem}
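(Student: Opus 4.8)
The plan is to read off the conclusion from three ingredients already in place: the geometric description of the Planck-epoch propagator in Example~\ref{asymmetry-matter-antimatter-in-quantum-universe}, the spectral criterion for negative temperature in Example~\ref{non-equilibrium-thermodynamic-processes}, and the definition of the thermodynamic quantum exotic component (Definition~\ref{quantum-thermodynamic-exotic-solution}). First I would recall that, by the very construction of $\bigstar$ in Example~\ref{asymmetry-matter-antimatter-in-quantum-universe}, the big-bang origin lies in $\widehat{(YM)}[i]$ but strictly outside $\widehat{(Higgs)}[i]$, so that along the whole pre-Goldstone portion of the bordism --- exactly the part $V_1\subset V$ whose boundary contribution is $\partial_1V$, lying outside $\widehat{(Higgs)}[i]$ --- the quantum mass-gap vanishes.

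Second, on this massless region I would run the partition-function argument of Example~\ref{non-equilibrium-thermodynamic-processes}. Writing the observed quantum Hamiltonian through its Jordan decomposition $\widetilde{H}(p)=\widetilde{H}(p)_+-\widetilde{H}(p)_-$ and using the convergence requirement for $Z=\TR(e^{-\beta\widetilde{H}(p)})$, the absence of a positive-semidefinite mass-gap on $V_1$ forces $\beta<0$, hence $\theta=1/(\kappa_B\beta)<0$ throughout $\overline{V_1}$. Lifting to the thermal covering ${}^{\Theta}\widehat{(YM)}[i]$ of Lemma~\ref{thermodynamics-covering-ym}, this places the lift of $V_1$ inside ${}^{\bigcirc}\widehat{(YM)}[i]=\theta^{-1}(\mathbb{R}^{-})$; projecting by ${}^{\Theta}\pi$ and recalling that ${}^{\circ}\widehat{(YM)}[i]={}^{\Theta}\pi({}^{\bigcirc}\widehat{(YM)}[i])$, one gets $V_1\subset{}^{\circ}\widehat{(YM)}[i]$ and in particular $\partial_1V=\partial V_1\subset{}^{\circ}\widehat{(YM)}[i]$. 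Then $V\cap{}^{\circ}\widehat{(YM)}[i]\supseteq V_1\not=\varnothing$, so by Definition~\ref{quantum-thermodynamic-exotic-solution} the propagator $V$ carries a nonempty thermodynamic quantum exotic component, as claimed.

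The delicate step is the sign determination in the second paragraph: the Jordan decomposition only splits $\widetilde{H}(p)$ into its positive and negative parts without, a priori, telling us which dominates, so the bare vanishing of the mass-gap does not by itself pin $\beta$ to the negative half-line. To close this gap I would bring in the energy budget of the expansion: combining equation~\eqref{equation-thermodynamic-quantum-exotic-nonlinear-propagator} and Proposition~\ref{proposition-thermodynamic-quantum-exotic-nonlinear-propagator}, the transit from the massless $\bigstar$ to the massive Higgs-universe $B_H$ must increase the quantum Hamiltonian content, which is precisely the population-inverted, ``hotter-than-infinite'' regime realized physically by the laser system of Example~\ref{laser-example}; this is the input that selects $\widetilde{H}(p)$ negative semidefinite on $V_1$ and thereby $\theta<0$. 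I expect this translation of the energy-creation requirement into a definite statement about $Sp(\widetilde{H}(p))$ to be the main obstacle.
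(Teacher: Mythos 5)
You have correctly isolated the delicate point --- determining the sign of the observed quantum Hamiltonian on $V_1$ --- but the patch you propose does not close it, and it is not the route the paper takes. Your appeal to Proposition~\ref{proposition-thermodynamic-quantum-exotic-nonlinear-propagator} and equation~(\ref{equation-thermodynamic-quantum-exotic-nonlinear-propagator}) runs in the wrong direction: that proposition \emph{assumes} the propagator is thermodynamic quantum exotic (i.e.\ that $<\omega_H,P>$ is positive definite) and \emph{deduces} the energy increase, so you cannot use it to establish the exotic property; moreover it concerns the boundary evaluation $<\omega_H,P>$, not the bulk Hamiltonian content of $V_1$, and the mere fact that $H[i|B_H]>0$ while $\bigstar$ is massless says nothing about the sign of $\widetilde{H}(p)$ \emph{on the interior of} $V_1$. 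Likewise, the vanishing of the mass-gap on $V_1$ does not force $\beta<0$ in the partition-function argument of Example~\ref{non-equilibrium-thermodynamic-processes}, as you yourself concede.

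The ingredient you are missing is the one the paper's proof actually rests on: the conservation of vacuum quantum numbers, i.e.\ the requirement that the total quantum energy content of the whole propagator vanish, $H[i|V]=0$, because $V$ originates from the quantum vacuum at $\bigstar$. Writing $V=V_1\bigcup V_2$ with $V_2=V\setminus V_1\subset\overline{\widehat{(Higgs)}[i]}$, additivity gives $0=H[i|V]=H[i|V_1]+H[i|V_2]$; since $V_2$ has mass-gap, $H[i|V_2]$ is positive definite (via the quantum-distribution interpretation of the observed quantum Hamiltonian, Theorem~5.5 in \cite{PRAS14}(III), integrating $\int_{V_t}i_t^*<H,\alpha>$ over proper time against arbitrary character-$3$-test functions $\alpha$), whence $H[i|V_1]=-H[i|V_2]$ is negative definite. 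This negativity of the observed quantum Hamiltonian on $V_1$ is precisely what places $V_1$ (hence $\partial_1V$) in ${}^{\circ}\widehat{(YM)}[i]$ and exhibits the thermodynamic quantum exotic component. Without the zero-total-energy hypothesis your argument cannot conclude, so as written the proposal has a genuine gap.
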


\begin{proof}
Let us first give a short proof by simply denoting $H[i|V]$ the observed quantum hamiltonian valued on the nonlinear quantum propagator $V$. Set $V=V_1\bigcup V_2$, where $V_1\not\subset \overline{\widehat{(Higgs)}[i]}$, i.e., $V_1$ is not contained in the closure of $\widehat{(Higgs)}[i]$, and $V_2=V\setminus V_1$. For the energy vacuum conservation must be $H[i|V]=0$. On the other hand $H[i|V]=H[i|V_1\bigcup V_2]=H[i|V_1]+H[i|V_2]$. Taking into account that $H[i|V_2]>0$, since $V_2$ has mass-gap, it necessarily follows that must be $H[i|V_1]=-H[i|V_2]<0$, hence $V_1$ is a thermodynamic exotic quantum part of $V$. Let us now give a more precise mathematical meaning to the evaluation $H[i|V]$. In fact to the observed quantum Hamiltonian one can canonically associate whether the meaning of distribution (see Theorem 5.3 in \cite{PRAS14}(III)) or quantum distribution (see Theorem 5.5 in \cite{PRAS14}(III)).\footnote{For complementary information see also the research books \cite{PRAS1,PRAS11}.} Here we shall consider, for simplicity, the second interpretation. With this respect, let us see $V$ as a fiber bundle on the proper-time of the quantum relativistic frame. Then we can write $V=\bigcup_{0\le t\le t_B}V_t$, with $V_0=\bigstar$, $V_{t_{B_H}}=B_H$ and $V_{t_B}=B$. Then one has
\begin{equation}\label{quantum-distribution-observed-hamiltonian}
    H[i|V]=\int_{[0,t_B]}[\int_{V_t}i_t^*<H,\alpha>]dt\in A,\, \forall\alpha\in\mathfrak{C}^3_{ar}(E)
\end{equation}
where $E\equiv M\times \widehat{A}\to M$ is a vector bundle, $\mathfrak{C}^3_{ar}(E)$ is the space of character-3-test functions and $i_t:V_t\subset V\subset N\to M$ is the mapping induced by the quantum relativistic frame $i:N\to M$. We get
\begin{equation}\label{quantum-distribution-observed-hamiltonian-a}
    0=H[i|V]=H[i|V_1]+H[i|V_2]\in A.
\end{equation}
In (\ref{quantum-distribution-observed-hamiltonian-a}) one has denoted $H[i|V_1]=\int_{[t_{0},t_{B_H})}[\int_{V_t}i_t^*<H,\alpha>]dt$ and $H[i|V_2]=\int_{[t_{B_H},t_B]}[\int_{V_t}i_t^*<H,\alpha>]dt$.
Since from Theorem 5.5 in \cite{PRAS14}(III) one has that $H[i|V_2] \in A$ must be definite positive, hence $H[i|V_1]\in A$ must be definite negative. As this holds for any characteristic test function $\alpha$, it follows that the observed quantum Hamiltonian on $V_1$ must be definite negative, therefore $V_1$ is a thermodynamic exotic quantum component of $V$.
\end{proof}
\begin{definition}\label{big-bang-thermodynamic-exotic-quantum-nonlinear-propagator}
We call {\em big-bang nonlinear quantum propagator} the nonlinear quantum propagator $V=V_1\bigcup B_H$, bording $\bigstar$, i.e., the point of the big-bang in ${}^{\circ}\widehat{(YM)}[i]$, with the massive Higgs-universe $B_H$.\footnote{In the following we write indifferently $\partial V_1$  or $\partial_1V$ to denote the boundary of $V_1$.}
\end{definition}

\begin{cor}[Relation between Higgs-universe quantum energy and the big-bang nonlinear quantum propagator]\label{relation-between-quantum-higgs-universe-mass-and-big-bang-quantum-nonlinear-propagator}
One has that the quantum energy content of $B_H$ is determined by the thermodynamic exotic quantum part of $V$:
\begin{equation}\label{equation-relation-between-quantum-higgs-universe-mass-and-big-bang-quantum-nonlinear-propagator}
H[i|B_H]=-H[i|V_1].
\end{equation}
\end{cor}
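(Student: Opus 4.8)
The plan is to read the identity off directly from the vacuum energy balance already established in the proof of Theorem~\ref{big-bang-and-thermodynamic-exotic-component-propagator}, specialized to the big-bang quantum nonlinear propagator $V=V_1\bigcup B_H$ of Definition~\ref{big-bang-thermodynamic-exotic-quantum-nonlinear-propagator}. The key input is that $V$ originates at the vacuum point $\bigstar\in{}^{\circ}\widehat{(YM)}[i]$, whose quantum numbers are those of the vacuum; in particular its observed quantum Hamiltonian vanishes. Since the observed quantum energy is a conserved quantity of $\widehat{(YM)}[i]$ and $V$ bords $\bigstar$ with $B_H$, the total observed quantum Hamiltonian evaluated on the bordism must equal the one carried by $\bigstar$, i.e.\ $H[i|V]=0$.

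Next I would invoke the quantum-distribution interpretation of $H[i|\cdot]$ used in~(\ref{quantum-distribution-observed-hamiltonian}), writing $V$ as the fibering $V=\bigcup_{0\le t\le t_{B_H}}V_t$ over the proper time of the frame $i$ and splitting the proper-time integration at the instant $t_{B_H}$ at which the Goldstone boundary is crossed. Exactly as in~(\ref{quantum-distribution-observed-hamiltonian-a}), this yields the additive decomposition
\begin{equation*}
H[i|V]=H[i|V_1]+H[i|B_H]\in A,
\end{equation*}
where now the massive (Higgs) part $B_H$ plays the role of the component $V_2$ of the theorem. Combining this with $H[i|V]=0$ immediately gives $H[i|B_H]=-H[i|V_1]$, which is the assertion. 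In this sense the corollary is genuinely a restatement of the balance $H[i|V_1]=-H[i|V_2]$ of Theorem~\ref{big-bang-and-thermodynamic-exotic-component-propagator} with $V_2=B_H$, so I expect the argument to be short.

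The only point that needs care — and which I regard as the main (though minor) obstacle — is the additivity of $H[i|\cdot]$ across the decomposition $V=V_1\bigcup B_H$: one must check that the separating slice at $t_{B_H}$ contributes nothing to either integral, so that the two contributions simply add, for every character-$3$-test function $\alpha\in\mathfrak{C}^3_{ar}(E)$. This is guaranteed by the same quantum-distribution machinery (Theorem~5.5 in \cite{PRAS14}(III)) already employed in Theorem~\ref{big-bang-and-thermodynamic-exotic-component-propagator}, since the splitting occurs over a measure-zero slice in the proper-time integration; as this holds for all $\alpha$, the identity descends to the level of elements of $A$. Once additivity is secured the result follows at once, and the negativity of $H[i|V_1]$ (hence the positivity of $H[i|B_H]$) is precisely the thermodynamic-exotic content of $V_1$ identified in the theorem.
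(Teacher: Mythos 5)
Your argument is essentially the paper's own: the corollary is stated without proof and is meant to follow exactly as you say, by specializing the vacuum balance $0=H[i|V]=H[i|V_1]+H[i|V_2]$ from Theorem~\ref{big-bang-and-thermodynamic-exotic-component-propagator} to the big-bang propagator $V=V_1\bigcup B_H$ with the massive part identified with $B_H$. The only caveat is that $H[i|B_H]$ must be read as the fixed-time Hamiltonian $\int_{V_{t_{B_H}}}i_{t_{B_H}}^*<H,\alpha>$ (equivalently $<\omega_H,B_H>$ up to orientation, as in Theorem~\ref{relation-between-higgs-universe-quantum-energy-and-the-boundary-of-the-big-bang-quantum-nonlinear-propagator}) rather than as a contribution to the proper-time integral (\ref{quantum-distribution-observed-hamiltonian}), where the slice $t=t_{B_H}$ would have measure zero; with that reading your additivity step is exactly the paper's.
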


\begin{theorem}[Relation between Higgs-universe quantum energy and the boundary of the big-bang nonlinear quantum propagator]\label{relation-between-higgs-universe-quantum-energy-and-the-boundary-of-the-big-bang-quantum-nonlinear-propagator}
The Higgs-universe quantum energy is related also to the boundary of the big-bang nonlinear quantum propagator, by means of the quantum energy conservation form $\omega_H$:
\begin{equation}\label{equation-relation-between-higgs-universe-quantum-energy-and-the-boundary-of-the-big-bang-quantum-nonlinear-propagator}
    H[i|B_H]_0=<\omega_H,\partial_1V>+H[i|B_H]_{00}.
\end{equation}
This proves that $<\omega_H,\partial_1V>+H[i|B_H]_{00}$ must be definite positive, since $H[i|B_H]_0$ is so. Therefore the big-bang nonlinear quantum propagator is  a thermodynamic quantum exotic nonlinear propagator in the sense of Definition \ref{thermodynamic-quantum-exotic-nonlinear-propagator}.
\end{theorem}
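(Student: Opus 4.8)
The plan is to read the claimed identity as the quantum energy conservation law of Proposition \ref{proposition-thermodynamic-quantum-exotic-nonlinear-propagator} applied to the big-bang quantum nonlinear propagator $V=V_1\bigcup B_H$ of Definition \ref{big-bang-thermodynamic-exotic-quantum-nonlinear-propagator}, viewed as bording the initial Cauchy datum $\bigstar$ with the final Cauchy datum $B_H$ through the lateral (time-like) boundary piece $\partial_1V\subset {}^{\circ}\widehat{(YM)}[i]$. First I would invoke the conservation form $\omega_H$, which is closed on solutions of $\widehat{(YM)}[i]$, and pair it with $\partial V_1=\bigstar\sqcup\partial_1V\sqcup B_H$. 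Writing $H[i|t]=<\omega_H,V_t>$ for the proper-time section $V_t$ of $V_1$, the relation recalled in Proposition \ref{proposition-thermodynamic-quantum-exotic-nonlinear-propagator} (equation (24) of Part I) yields
\begin{equation*}
H[i|B_H]=H[i|\bigstar]+<\omega_H,\partial_1V>\in A.
\end{equation*}

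Next I would argue that $H[i|\bigstar]=0$: the origin $\bigstar$ of the big-bang lies outside $\widehat{(Higgs)}[i]$, hence carries no mass-gap, so the observed quantum Hamiltonian vanishes there (it is moreover evaluated on a degenerate point). This gives $H[i|B_H]=<\omega_H,\partial_1V>$, the asserted identity. As a consistency check, combining it with Corollary \ref{relation-between-quantum-higgs-universe-mass-and-big-bang-quantum-nonlinear-propagator} reproduces $<\omega_H,\partial_1V>=-H[i|V_1]$, i.e. the bulk-to-boundary reduction of the negative exotic energy $H[i|V_1]$ obtained in the proof of Theorem \ref{big-bang-and-thermodynamic-exotic-component-propagator}.

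For the final assertion I would reuse the positivity already established there: since $B_H\subset\widehat{(Higgs)}[i]$ is massive, $H[i|B_H]$ is definite positive (Theorem 5.5 in \cite{PRAS14}(III), as used in Theorem \ref{big-bang-and-thermodynamic-exotic-component-propagator}). Hence $<\omega_H,\partial_1V>$ is definite positive, which is exactly the condition of Definition \ref{thermodynamic-quantum-exotic-nonlinear-propagator} with $P=\partial_1V$, so $V$ is a thermodynamic quantum exotic nonlinear propagator.

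The hard part will be the two facts the conservation-law identity quietly relies on: justifying $H[i|\bigstar]=0$ rigorously within the quantum-distribution interpretation of $H[i|\cdot]$ used in Theorem \ref{big-bang-and-thermodynamic-exotic-component-propagator}, and controlling the orientation signs together with the well-posedness of the pairing $<\omega_H,\partial_1V>$ over the generally non-smooth, exotic lateral boundary $\partial_1V$. Once these are settled, both the identity and the definiteness conclusion follow at once.
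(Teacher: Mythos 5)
Your proposal is correct and follows essentially the same route as the paper: the paper likewise sets $V=V_1\bigcup B_H$, applies the closedness of $\omega_H$ to get $0=<d\omega_H,V>=<\omega_H,\partial_1V>+<\omega_H,B_H>=<\omega_H,\partial_1V>-H[i|B_H]$, and then reads off positivity of $<\omega_H,\partial_1V>$ from that of $H[i|B_H]$. The only difference is cosmetic: you split off the point $\bigstar$ and argue $H[i|\bigstar]=0$ explicitly, whereas the paper absorbs it into $\partial_1V$ without comment.
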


\begin{proof}
Set $V=V_1\bigcup B_H$ as considered in Definition \ref{big-bang-thermodynamic-exotic-quantum-nonlinear-propagator}.
From the conervation of quantum energy we get\footnote{We have used the same notation of Part I. In other words
$<\omega_H,B_H>=\int_{B_H}[H_{B_H}-(\partial\tilde\mu^{\alpha j}_K.L)\tilde\mu^K_{\alpha j}]\otimes dx^1\wedge dx^2\wedge dx^3=H[i|B_H]_0-H[i|B_H]_{00}$.}
\begin{equation}\label{equation-relation-between-higgs-universe-quantum-energy-and-the-boundary-of-the-big-bang-quantum-nonlinear-propagator-a}
\begin{array}{ll}
                                                      0 & =<d\omega_H,V>\\
                                                       & =<\omega_H,\partial V_1\bigcup B_H>\\
                                                       &=<\omega_H,\partial V_1>+<\omega_H,B_H>\\
                                                       &=<\omega_H,\partial V_1>-H[i|B_H]_0+H[i|B_H]_{00}.\\
                                                    \end{array}
\end{equation}
Therefore, we get formula in (\ref{equation-relation-between-higgs-universe-quantum-energy-and-the-boundary-of-the-big-bang-quantum-nonlinear-propagator}) and the conclusion on the thermodynamic exotic property of the big-bang nonlinear quantum propagator holds.
\end{proof}

\begin{theorem}[Expansiveness-criterion for the Universe at the Planck epoch]\label{expansiveness-criterion-universe-planck-epoch}
A necessary and sufficient condition for the Universe in the Planck-epoch to be expansive is that the quantum non linear propagator, encoding this Universe, after crossed the Goldstone boundary, has the boundary part, $P$, contained between the quantum Higgs-universe and some other massive transversal section $B\subset\widehat{(Higgs)}$,\footnote{See the representation in Fig. \ref{figure-planck-epoch-universe}.} thermodynamic quantum exotic, in the sense of Definition \ref{thermodynamic-quantum-exotic-nonlinear-propagator}, i.e., for the quantum energy conservation form $\omega_H$, one has that $<\omega_H|_P,P>+\Delta H[i|t_1,t_2]_{00}\in A$ is positive definite. We call this condition the {\em quantum expansiveness condition} on the nonlinear quantum propagator encoding the Universe at the Planck epoch.
\end{theorem}
\begin{proof}
This theorem follows directly after Proposition \ref{proposition-thermodynamic-quantum-exotic-nonlinear-propagator}, Theorem \ref{big-bang-and-thermodynamic-exotic-component-propagator}, Theorem \ref{relation-between-higgs-universe-quantum-energy-and-the-boundary-of-the-big-bang-quantum-nonlinear-propagator} and Corollary \ref{relation-between-quantum-higgs-universe-mass-and-big-bang-quantum-nonlinear-propagator}. Let us here only emphasize that the quantum energy content of any nonlinear quantum propagator of the Universe at the Planck epoch is zero, according to the conservation of vacuum quantum numbers. In fact if $\tilde V=V\bigcup_{B_H}\hat V$, with $V=V_1\bigcup B_H$, where $V_1$ is the big-bang nonlinear quantum propagator and $\hat V$ is a nonlinear quantum propagator contained in $\widehat{(Higgs)}[i]$, with $\partial\hat V=B_H\bigcup P\bigcup B$, we get for the quantum energy conservation:
\begin{equation}\label{quantum-energy-conservation-consequences}
    \left\{
    \begin{array}{ll}
      0 =<\omega_H,\partial\tilde V>&=<\omega_H,\partial_1V\bigcup P\bigcup B> \\
      &=<\omega_H,\partial_1V>+<\omega_H,P>+<\omega_H,B>\\
      &=H[i|B_H]+<\omega_H,P>-H[i|B].\\
    \end{array}
    \right.
\end{equation}
Therefore we get $H[i|B]_0=H[i|B_H]_0+<\omega_H,P>+\Delta H[i|t_1,t_{B_H}]_{00}$. Since must be $<\omega_H,P>+\Delta H[i|t_1,t_{B_H}]_{00}$ and $H[i|B_H]_0$ positive definite, it follows that also $H[i|B]_0$ is positive definite, as it must be, since $B$ is a massive quantum universe. On the other hand, the quantum energy content of $\tilde V$ must be zero, hence we get:
\begin{equation}\label{quantum-energy-conservation-consequences-a}
    \left\{
    \begin{array}{ll}
      0 &=H[i|\tilde V]
      =H[i|V\bigcup_{B_H}\hat V]\\
      &=H[i|V_1\bigcup\hat V]\\
      &=H[i|V_1]+H[i|\hat V]\\
      &=-H[i|B_H]+H[i|\hat V].\\
      \end{array}
    \right.
\end{equation}
Therefore, we get $H[i|\hat V]=H[i|B_H]_0$. Since one has $H[i|B_H]_0=H[i|B]_0-(<\omega_H,P>+\Delta H[i|t_{B_H},t]_{00})$ one obtains the formula:
\begin{equation}\label{quantum-energy-conservation-consequences-b}
    H[i|\hat V]=H[i|B]_0-(<\omega_H,P>+\Delta H[i|t_{B_H},t]_{00}).
\end{equation}
By conclusion, the quantum energy content of $\hat V$ is the positive difference between the quantum energy content of $B$ and the quantum contribution of the boundary $<\omega_H,P>'+\Delta H[i|t_{B_H},t]_{00})$ identified by the quantum energy conservation form $\omega_H$.
\end{proof}

\begin{definition}[Planck-epoch-legacy]\label{planck-epoch-legacy}
We define  {\em Planck-epoch-legacy} the property of the Universe's boundary to have thermodynamic quantum exotic components. More precisely let $V$ be the nonlinear propagator encoding the Universe at the Einstein epoch, i.e. $V$ is a compact solution of the Einstein equation in the General Relativity, $(Ein)\subset J^2_4(W)$, with $\partial V=B_1\bigcup P\bigcup B_2$, where $B_1$ and $B_2$ are transversal sections of $V$, representing the Universe in its evolution period. Then the {\em Planck-epoch-legacy} means that the evaluation of the energy conservation form $\omega_H$ on $P$, $<\omega_H,P>++\Delta H[t_1,t_2]$, is positive definite.
\end{definition}
\begin{figure}[h]
\includegraphics[width=5cm]{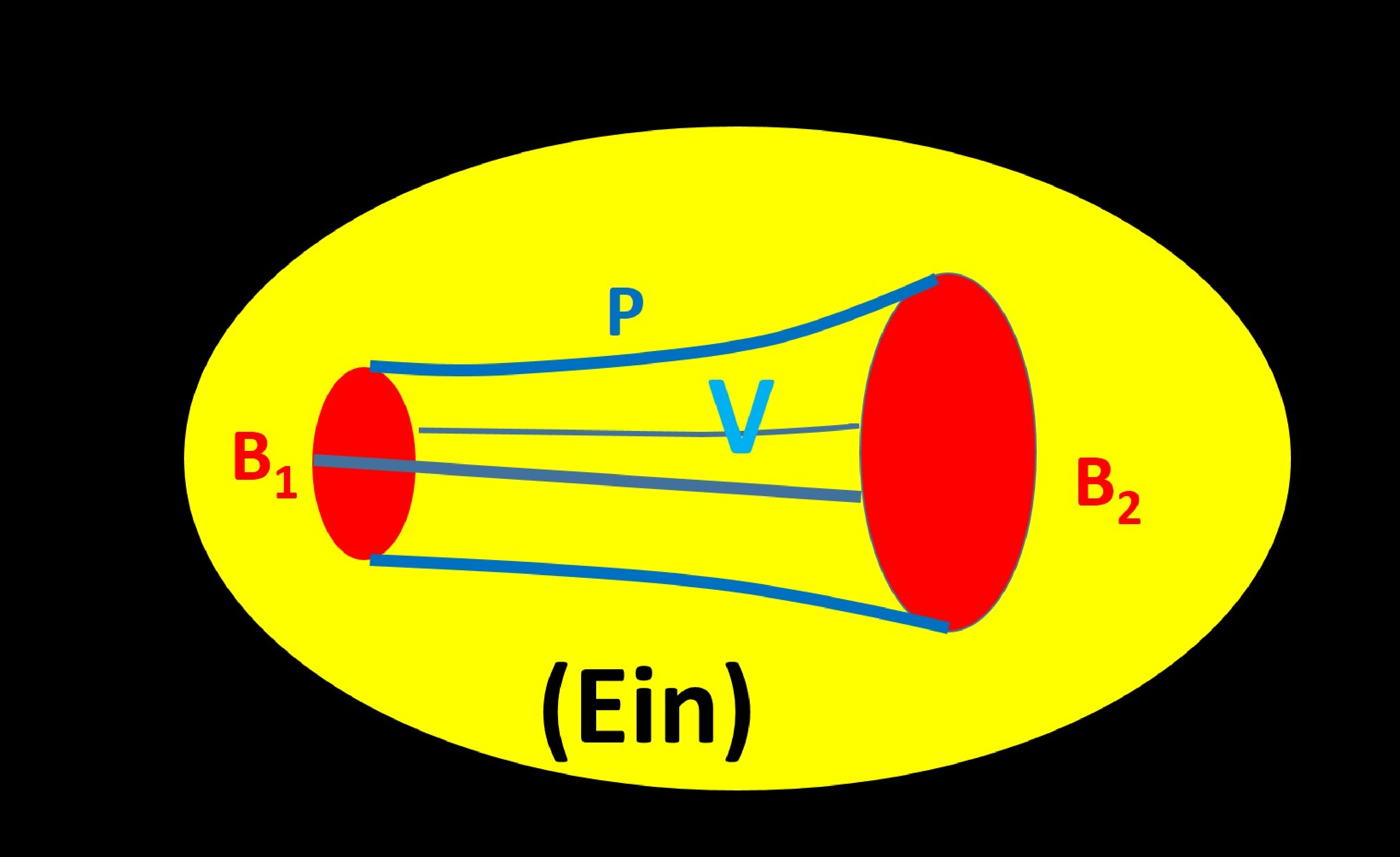}
\renewcommand{\figurename}{Fig.}
\caption{Representation of a nonlinear propagator $V$ bording universe $B_1$ with $B_2$ in the Einstein's equation of the General Relativity $(Ein)$: $\partial V=B_1+P+B_2\subset V\subset(Ein)$. $B_1$, $B_2$ and $P$ are $3$-dimensional integral manifolds. Furthermore, $P$ has a thermodynamic exotic component caused by the Planck epoch legacy.}
\label{figure-einstein-epoch-universe}
\end{figure}

\begin{theorem}[Expansive Universe in the Einstein epoch]\label{universe-einstein-epoch-expansion}
Assuming the Planck-epoch-legacy for the Universe in the Einstein epoch, this last is forced to increase its energy-matter content and to expand.
\end{theorem}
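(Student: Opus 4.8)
The plan is to transport the energy-balance argument of Theorem~\ref{expansivity-criterion-universe-planck-epoch} and Proposition~\ref{proposition-thermodynamic-quantum-exotic-nonlinear-propagator} from the Planck-epoch picture to the classical Einstein equation $(Ein)\subset J^2_4(W)$ that governs the Universe in the Einstein epoch. The single structural input I would rely on is that the observed quantum energy conservation form $\omega_H$ survives as a genuine conservation law on solutions of $(Ein)$, so that $\omega_H$ is closed along the compact nonlinear propagator $V$ of Definition~\ref{planck-epoch-legacy} and hence $<d\omega_H,V>=0$.

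First I would apply Stokes' theorem to $V$, exactly as in the computation~(\ref{equation-relation-between-higgs-universe-quantum-energy-and-the-boundary-of-the-big-bang-quantum-nonlinear-propagator-a}). Writing $\partial V=B_1\bigcup P\bigcup B_2$ with $t_1<t_2$, the vanishing of $<d\omega_H,V>$ gives
\[
0=<\omega_H,\partial V>=<\omega_H,B_1>+<\omega_H,P>+<\omega_H,B_2>.
\]
Reading off the two transversal sections with their induced boundary orientations, so that $<\omega_H,B_1>=H[i|B_1]$ for the earlier section and $<\omega_H,B_2>=-H[i|B_2]$ for the later one, the balance collapses to
\[
H[i|B_2]=H[i|B_1]+<\omega_H,P>,
\]
which is the Einstein-epoch analogue of Proposition~\ref{proposition-thermodynamic-quantum-exotic-nonlinear-propagator}.

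Next I would invoke the Planck-epoch-legacy hypothesis of Definition~\ref{planck-epoch-legacy}, which is precisely the statement that $<\omega_H,P>$ is positive definite. Since $B_1$ is a massive transversal section of the Universe, $H[i|B_1]$ is itself positive definite, and therefore $H[i|B_2]>H[i|B_1]$ in $A$. This establishes the first assertion of the theorem: the energy-matter content of the Universe strictly increases as one advances in the proper time of the quantum relativistic frame along $V$.

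The step I expect to be the genuine obstacle is converting this monotone growth of $H[i|\,\cdot\,]$ into an actual spatial \emph{expansion} of the transversal sections. For this I would argue that on solutions of $(Ein)$ the observed quantum Hamiltonian controls the scale factor of the induced spatial metric, so that the strict inequality $H[i|B_2]>H[i|B_1]$ cannot be realized on sections of non-increasing volume without violating the Einstein constraint; trading the abstract increase against the Friedmann-type relation supplied by $(Ein)$ then forces the spatial volume of $B_2$ to exceed that of $B_1$, i.e.\ expansion. The delicate point is to make the passage from the purely energetic bordism pairing $<\omega_H,P>$ to a rigorous geometric volume statement, and this is where the argument must be carried out with care rather than by analogy with the Planck-epoch case.
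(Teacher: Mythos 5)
Your argument is essentially identical to the paper's: the same Stokes computation $0=<\omega_H,\partial V>=H[i|t_1]-H[i|t_2]+<\omega_H,P>$ on the propagator of Definition~\ref{planck-epoch-legacy}, followed by positivity of $<\omega_H,P>$ from the Planck-epoch-legacy (together with $H[i|t_1]>0$) to conclude $H[i|t_2]>H[i|t_1]$. The final step you flag as the genuine obstacle --- converting the increase of energy-matter content into an actual spatial expansion --- is not carried out in the paper either, which closes with the bare assertion that the energy increase ``should cause an expansion of the universe,'' so on that point your proposal is, if anything, more candid than the source.
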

\begin{proof}
In fact since the Einstein equation admits the conservation of energy, one has a conservation $3$-form $\omega_H:(Ein)\to\Lambda^0_3(Ein)$, similar to the one reported in Theorem 3.20 in part I (equation (20). Then taking into account that also at macroscopic level we can encode Universe by means of a nonlinear propagator as described in Definition \ref{planck-epoch-legacy}, we get
\begin{equation}\label{energy-conservation-law-universe-einstein-epoch-expansion}
\begin{array}{ll}
   0 &=<d\omega_H,V>\\
  & =<\omega_H,\partial V>\\
  &=<\omega_H,B_1>-<\omega_H,B_2>+<\omega_H,P>\\
  &=H[i|t_1]_0-H[i|t_2]_0+<\omega_H,P>+\Delta H[t_1,t_2].\\
\end{array}
\end{equation}
Therefore one has $H[i|t_2]_0=H[i|t_1]_0+<\omega_H,P>+\Delta H[t_1,t_2]$. Taking into account that $<\omega_H,P>+\Delta H[t_1,t_2]>0$, according to the Planck-epoch-legacy, and also $H[i|t_1]_0>0$, it follows that $H[i|t_2]_0>H[i|t_1]_0$. Therefore the energy-matter content of the universe increases passing from $B_1$ to $B_2$. This should cause an expansion of the universe.
\end{proof}

\begin{cor}[The zero matter-energy content of the Universe's nonlinear propagator and boundary effect in the Einstein epoch]\label{zero-universe-einstein-epoch}
Since any Universe's nonlinear propagator $V$, in the Einstein epoch respects the conservation of energy, i.e., $<\omega_H,\partial V>=0$, equation {\em(\ref{energy-conservation-law-universe-einstein-epoch-expansion})} can be written as follows:
\begin{equation}\label{boundary-effect}
H[i|t_2]_0-<\omega_H,P>=H[i|t_1]_0+\Delta H[t_1,t_2].
\end{equation}
In other words at the Einstein epoch, the difference between the energy content of the Universe at the instant $t_2$ and the contribution of the boundary's nonlinear propagator, is just the energy content of the Universe at the instant $t_1<t_2$.
\end{cor}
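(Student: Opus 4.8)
The plan is to read the identity directly off the chain of equalities already established in the proof of Theorem~\ref{universe-einstein-epoch-expansion}, since no new geometric input is required. First I would recall that $\omega_H$ is a conservation $3$-form for the Einstein equation $(Ein)$, so that on any compact solution $V$ one has the closedness relation $<d\omega_H,V>=0$; by the pairing between the evaluation on $V$ and on its boundary, this is equivalent to $<\omega_H,\partial V>=0$, which is precisely the energy-conservation hypothesis quoted in the corollary.

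Next I would substitute the boundary decomposition $\partial V=B_1\bigcup P\bigcup B_2$ with the orientations fixed in Definition~\ref{planck-epoch-legacy}, which yields $<\omega_H,\partial V>=<\omega_H,B_1>-<\omega_H,B_2>+<\omega_H,P>$, and then identify the evaluations on the transversal sections with the observed quantum Hamiltonian at the two instants, $<\omega_H,B_1>=H[i|t_1]$ and $<\omega_H,B_2>=H[i|t_2]$, exactly as in equation~(\ref{energy-conservation-law-universe-einstein-epoch-expansion}). Combining these gives $0=H[i|t_1]-H[i|t_2]+<\omega_H,P>$.

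Finally I would transpose terms to isolate $H[i|t_1]$ on the right, obtaining $H[i|t_2]-<\omega_H,P>=H[i|t_1]$, which is formula~(\ref{boundary-effect}); the verbal reading then follows by interpreting $H[i|t_2]$ as the energy content at the later instant and $<\omega_H,P>$ as the boundary contribution of the propagator. The only point that deserves any care --- and is the nearest thing to an obstacle --- is keeping signs and orientations consistent between $\partial V$, the sections $B_1,B_2$, and the time-like piece $P$; once the orientation convention of the preceding theorem is adopted, the corollary is a purely algebraic rearrangement of equation~(\ref{energy-conservation-law-universe-einstein-epoch-expansion}) with no further content.
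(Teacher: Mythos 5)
Your proposal is correct and follows exactly the route the paper intends: the corollary is a purely algebraic transposition of equation~(\ref{energy-conservation-law-universe-einstein-epoch-expansion}) from the proof of Theorem~\ref{universe-einstein-epoch-expansion}, using $<\omega_H,\partial V>=0$, the decomposition $\partial V=B_1\bigcup P\bigcup B_2$, and the identifications $<\omega_H,B_1>=H[i|t_1]$, $<\omega_H,B_2>=H[i|t_2]$. Your remark about keeping the orientation conventions of the preceding theorem consistent is the only substantive point, and you handle it the same way the paper does.
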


\begin{remark}\label{remark-zero-universe-einstein-epoch}
Corollary \ref{zero-universe-einstein-epoch} agrees with Universe at the Planck epoch, and with its origin from the quantum vacuum.
\end{remark}

\begin{remark}[Dark-energy-matter]\label{cor-dark-energy-matter}
Nowadays the experimental justification of the increasing expansion of our Universe is attributed to some an huge presence of dark-energy-matter, namely energy-matter that is larger than one usually observed. The Pr\'astaro's theory of quantum supergravity, allows to understand that the Universe at the Plank epoch is encoded by a nonlinear quantum propagator with thermodynamic quantum exotic components. This nonlinear quantum propagator has zero quantum energy content and produces an expansion of the massive Higgs-universe, until its macroscopic level, called the Einstein epoch. Then the Planck-epoch-legacy has the effect to increase further Universe's expansion also at the Einstein epoch, thanks to the presence in its boundary of thermodynamic exotic components, as proved in Theorem \ref{universe-einstein-epoch-expansion}. By conclusion, the dark-energy-matter is really the cause of the actual Universe as conjectured by some cosmologic scientist, but it is not so strange or mysterious as it appears. It is a simple boundary effect of the geometrodynamic structure of the Universe legacy at its Planck epoch. Let us recall that in Example 4.19 of \cite{PRAS29} we have discussed also the meaning of so-called ``dark matter" saying that it can be made by massive virtual particles just codified by some theorems in  \cite{PRAS29}. (See Theorem 4.13, Theorem 4.15 and Theorem 4.18 therein.) In other words ``dark matter" can be considered a generic term to identify virtual massive particles produced when a solution of the quantum super Yang-Mills equation $\widehat{(YM)}$, crosses the Goldstone boundary, coming inside the Higgs quantum super PDE, $\widehat{(Higgs)}\subset \widehat{(YM)}$ contained into $\widehat{(YM)}$. This interpretation for ``dark matter" can support also photo-production of matter in high energy $\gamma\gamma$-scatterings.
\end{remark}
Therefore we can state the following theorem.

\begin{theorem}[Matter photo-production]\label{matter-photo-productiion}
$\gamma\gamma$-scatterings can produce solutions with mass-gap, i.e., living inside $\widehat{(Higgs)}[i]$.
\end{theorem}
\begin{proof}
In fact since it is admitted the annihilation reaction: $e^+ + e^{-}\to \gamma+\gamma$, for crossing symmetry it is also admitted the matter production $\gamma+\gamma\to e^++e^-$. This is a direct consequence of the algebraic topologic properties of nonlinear quantum propagators and their integral characteristic quantum numbers, (related to quantum conservation laws). For details see Theorem 4.13 in \cite{PRAS29}. Let us emphasize that solutions encoding such reactions are just nonlinear quantum propagators that acquire mass-gap by crossing the Goldstone boundary, when enter in  $\widehat{(Higgs)}[i]$. Similarly one can state existence of nonlinear quantum propagators in $\widehat{(YM)}$ encoding photo-production of proton-antiproton couples: $2n\gamma\to p+\bar p$. (See Example 4.13 in \cite{PRAS29}. Massive particles and massive gravitons can be obtained by means of suitable nonlinear quantum propagators as shown in Example \ref{quantum-photons-entangled-with-massive-bound-photons}.)\footnote{Similar considerations can be done by means of the crossing-symmetry applied to usual production of virtual $\pi^+$, decaying into positrons and neutrinos, obtained from massive photons identified with virtual states into $\gamma$-nucleons interactions. This is a new point of view looking to long-studied $\pi^+$-photo-production. (For general information on (classical) pions photo-production see, e.g., \cite{DRECHSEL-TIATOR, DUTTA, DUTTA-GAO-LEE} and references quoted therein.) Let us also emphasize that production of matter from photons has been conjectured by G. Breit and J. A. Wheeler \cite{BREIT-WHEELER}. Thanks to Pr\'astaro's algebraic topology of quantum super Yang-Mills PDEs, Breit-Wheeler-type processes become justified by a theorem. (Nowadays experiments with high energy lasers are considered too at the Imperial College, London and Max-Planck Institut, Heidelberg \cite{PIKE-MACKENROTH-HILL-ROSE}.)}
\end{proof}

\begin{conclusions}\label{experimental-verification-quantum-supergravity}
The actual status of our Universe is the most wonderful experimental verification that our Universe at the Planck-epoch was encoded by a nonlinear quantum propagator as described in Example \ref{asymmetry-matter-antimatter-in-quantum-universe} and in Theorem \ref{expansiveness-criterion-universe-planck-epoch} . This is also an experimental checking of the Pr\'astaro's formulation of quantum supergravity theory, built in the framework of his algebraic topology of quantum super PDEs. This theory gives also a precise mathematical support to some early conjectures by F. Hoyle, V. Narlikar and P. A. Dirac on the continuous creation of matter. (See, e.g., \cite{DIRAC,HOYLE,HOYLE-NARLIKAR}.)

Let us emphasize that the main novelty in the Pr\'astaro's theory formulated in \cite{PRAS22} is the introduction of the geometric concept of {\em nonlinear quantum propagator}. This has been possible thanks to the algebraic topology of quantum super PDE's, already published in some works \cite{PRAS1, PRAS2, PRAS3, PRAS4,PRAS5,PRAS11,PRAS12,PRAS14,PRAS15,PRAS19,PRAS21,PRAS22}. This theory realizes for the first time the unification of the Einstein's General Relativity with Quantum Mechanics, and solves most of the outstanding problems in modern fundamental physics. It contains as particular cases the Standard Model, Superstring Theories and M-theory. All these are canonically quantized classical theories (PDE's). With this respect, let us recall that the canonical quantization of classical PDE's from a side identifies some particular quantum (super) PDE's and from another point of view can be considered a linear integration of quantum (super) PDE's. (For details see \cite{PRAS14,PRAS19}.) In the canonical quantization it appears the symbol $\hbar$. This aspect can induce some scientist to believe that a theory, without the presence of such a symbol, cannot be considered a quantum theory. This is a too naive point of view ! In fact, from our theory one can understand that the process of quantization corresponds to a linear integration of quantum (super) PDE's. Instead the more general integration method corresponds to the identification of nonlinear quantum propagators. It is just by means of such more general geometric objects that we have been able to realize the unification between General Relativity and Quantum Mechanics. In other words, this Pr\'astaro's theory does not aim confute Standard Model, but proposes a more general theory that reconciles Einstein's General Relativity and Quantum Mechanics, including Standard Model as a particular case. In some sense the position of the Pr\'astaro's theory with respect to the Standard Model is similar to the one of the Einstein's General Relativity with respect to the Newton's Gravitation Theory. A similar comment can be made about the relation between Pr\'astaro's theory with Superstring Theories and M-theory \cite{HORAVA-WITTEN,WITTEN}. In fact these theories emphasize the importance to consider particles as extended objects taking into account collective contributions that cannot be reduced to ones of point-like particles. On the other hand extended objects are considered also in Pr\'astaro's theory like $p$-chain solutions (or in super sense like $p|q$-chain solutions) of suitable quantum (super) PDE's. However, since Superstring Theories and M-theory are canonically quantized classical theories, their final result is unable to encode quantum nonlinear phenomena. These, instead, find their full formulation by means of nonlinear quantum propagators in quantum super Yang-Mills PDE's, in the sense of the Pr\'astaro's theory. Therefore, despite the great successes of Standard Model, Superstring Theories and M-theory, the fact that they are canonically quantized classical theories, produces their impossibility to gain the unification of Einstein's General Relativity with Quantum Mechanics.\footnote{Let us also add that canonically quantized theories can be anomalous theories. Furthermore, let us also remark that some authors working in String Theories, adopt geometric quantizations of superstrings, or some other type of quantization, to obtain noncommutative geometric objects. But such quantizations are only virtual ones, namely they are completely unrelated to the dynamics where such classical objects should represent solutions of suitable classical (super) PDE's. Therefore these quantizations are only mathematical games, completely unrelated from any possible micro-world physics. (See, e.g., \cite{PRASTARO-Zbl1162-58002}.) Similar criticism can be applied to so-called canonical and loop quantum gravity, lattice gauge theories and lattice supergravities too. (See, e.g., \cite{PRASTARO-MR2870866}.)} Pr\'astaro's theory overcomes this obstacle by his algebraic topology of quantum super PDEs. Really canonical quantization encodes {\em quantum fluctuations} around solutions of classical PDE's or quantum PDE's. Quantum Super-Gravity is a different thing ! It must encode micro-world physics, namely physics at Planck length, where the underlying geometry is not more commutative, but becomes noncommutative thanks to very high energy concentrations.\footnote{In this sense $\hbar$ appears also in our theory since the Planck's length $l_{pl}$ is related to $\hbar$ by the following formula $l_{pl}=\sqrt{\frac{\hbar\, G}{c^3}}=1.616\times 10^{-35} m$. Here $G$ is the gravitation constant and $c$ is the light speed. In this context the classic limit is obtained by substituting the fundamental quantum (super)algebra $A$ of the theory with its subalgebra $\mathbb{R}\subset A$, hence with a restriction process. Local super-limits of quantum supergravity  theories can be also obtained. For details see \cite{PRAS14}(III).} Therefore Quantum Super-Gravity cannot be encoded by quantum fluctuations of solutions of the Einstein's equation, or some other classical PDE, as, for example, ones arising from Superstring Theories and M-theory.  Quantum Super-Gravity is necessarily a geometric formulation of PDE's in the category of quantum supermanifolds, focused on the quantum super Yang-Mills PDE's ({\em Quantum SG-Yang-Mills PDE's}) as formulated by A. Pr\'astaro in \cite{PRAS22}.
\end{conclusions}

\begin{appendices}

\appendix{\bf Appendix A: Nonlinear quantum propagators: Some Numerical Results I.}\label{appendixa}
\renewcommand{\theequation}{A.\arabic{equation}}
\setcounter{equation}{0}  

\begin{figure}[h]
\includegraphics[width=5cm]{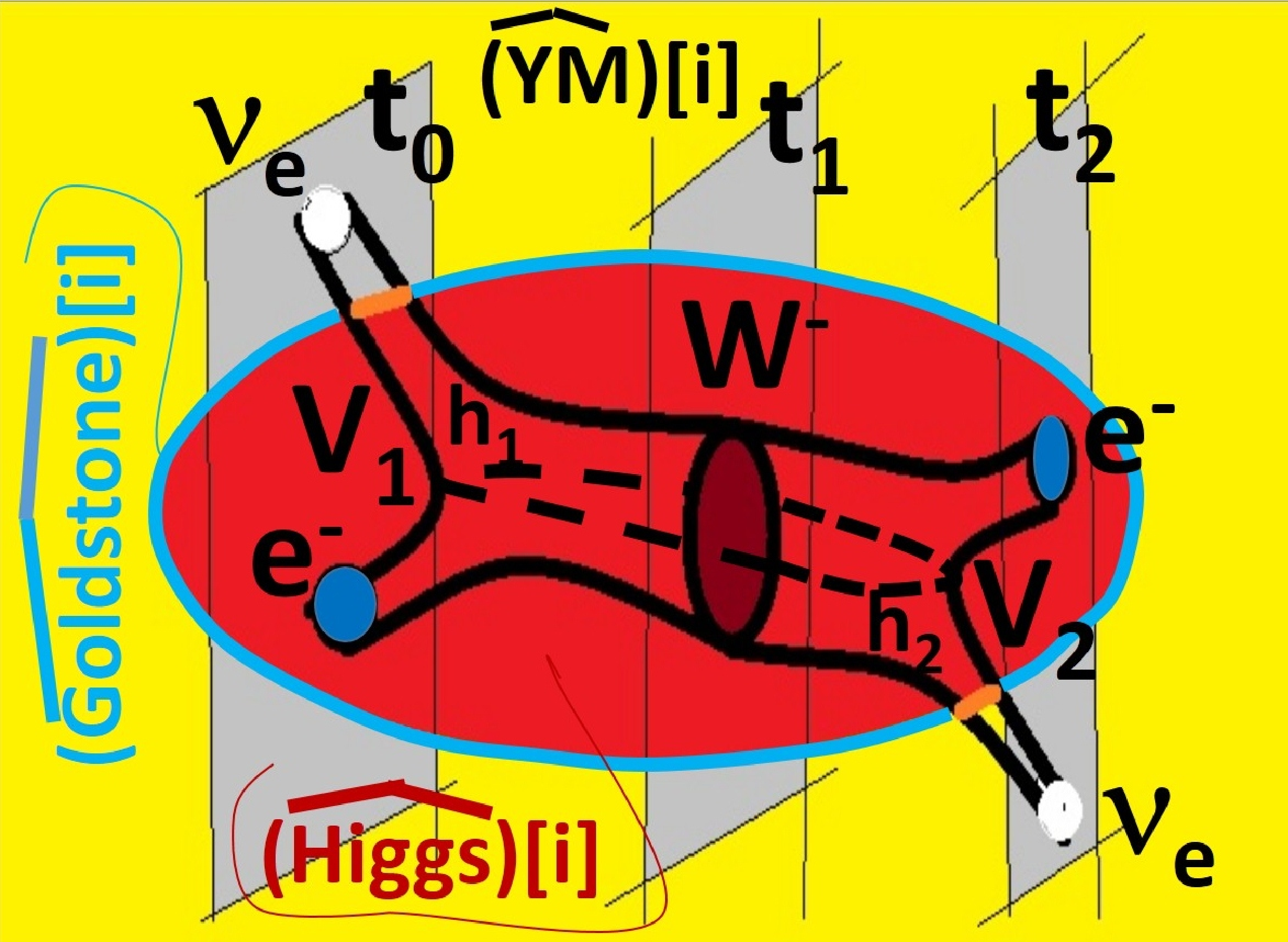}
\caption{Representation of the observed interaction neutrino-electron with the production of the virtual massive boson $W^-$. The singular nonlinear quantum propagator $V$ splits in two nonlinear quantum propagators $V_1$ and $V_2$, such that $V=V_1\bigcup_W V_2$.  The initial Cauchy data is $N_0=\nu_e\bigcup e^-$, the intermediate Cauchy data is $N_1=W^-$ and the final Cauchy data is
$N_2=e^-\bigcup \nu_e$. The neutrino is considered massless according to the Standard Model. However, since $V_1$ must cross the Goldstone boundary, neutrino acquires mass going inside $\widehat{(Higgs)}[i]$, i.e., it identifies a virtual massive neutrino $\nu_e'$, that is the one observed. Similarly, after the interaction neutrino remains a massive virtual neutrino inside $\widehat{(Higgs)}[i]$, hence it is experimentally detected as a massive particle. However one can think that such virtual massive neutrinos going outside $\widehat{(Higgs)}[i]$ should lose their masses. This is an example of nonlinear quantum propagator encoding an anomaly-massive quantum virtual particle ($W^-$). (See Theorem 4.23(II).)}
\label{neutrino-electron-interaction-nonlinear-quantum-propagator}
\end{figure}

In this appendix we shall emphasize the importance of nonlinear quantum propagators by considering some numerical results arising from experimental data. In particular we shall consider some calculations to obtain masses for the bosons $W^{\pm}$ and $Z^0$, in the celebrated Weinberg-Salam model for electro-weak reactions. (See the following Wikipedia link: \href{http://en.wikipedia.org/wiki/Electroweak_interaction}{Electroweak interaction}.) We will show that also in this important success of the Standard Model, the Pr\'astaro's Algebraic Topology of quantum super PDEs gives a precise geometric support and justification for the experimental data, that otherwise could not be completely understood.

As it is well-known in the Weinberg-Salam model for electro-weak reactions, the Gauge group is
\begin{equation}\label{ws-gauge-group}
  G_{WS}=SU(2)_{weak-isospin}\times U(1)_{weak-hypercharge}.
\end{equation}
To this group correspond $3$ gauge massless bosons $(W^1,W^2,W^3)$ from $SU(2)$ and  $1$ massless boson $(B^0)$ from $U(1)$. For the gauge symmetry breaking mechanism the above four gauge bosons become $(W^+,Z^0,W^-,\gamma)$, where the first three have masses. In the Weinberg-Salam model the way to calculate the $W$ mass is to consider the neutrino-electron scattering $\nu_e+e^-\to e^-+\nu_e$ by means of the Feynman diagram reported in Fig. \ref{feynman-diagram-electron-neutrino}
\begin{figure}[h]
\includegraphics[width=4.3cm]{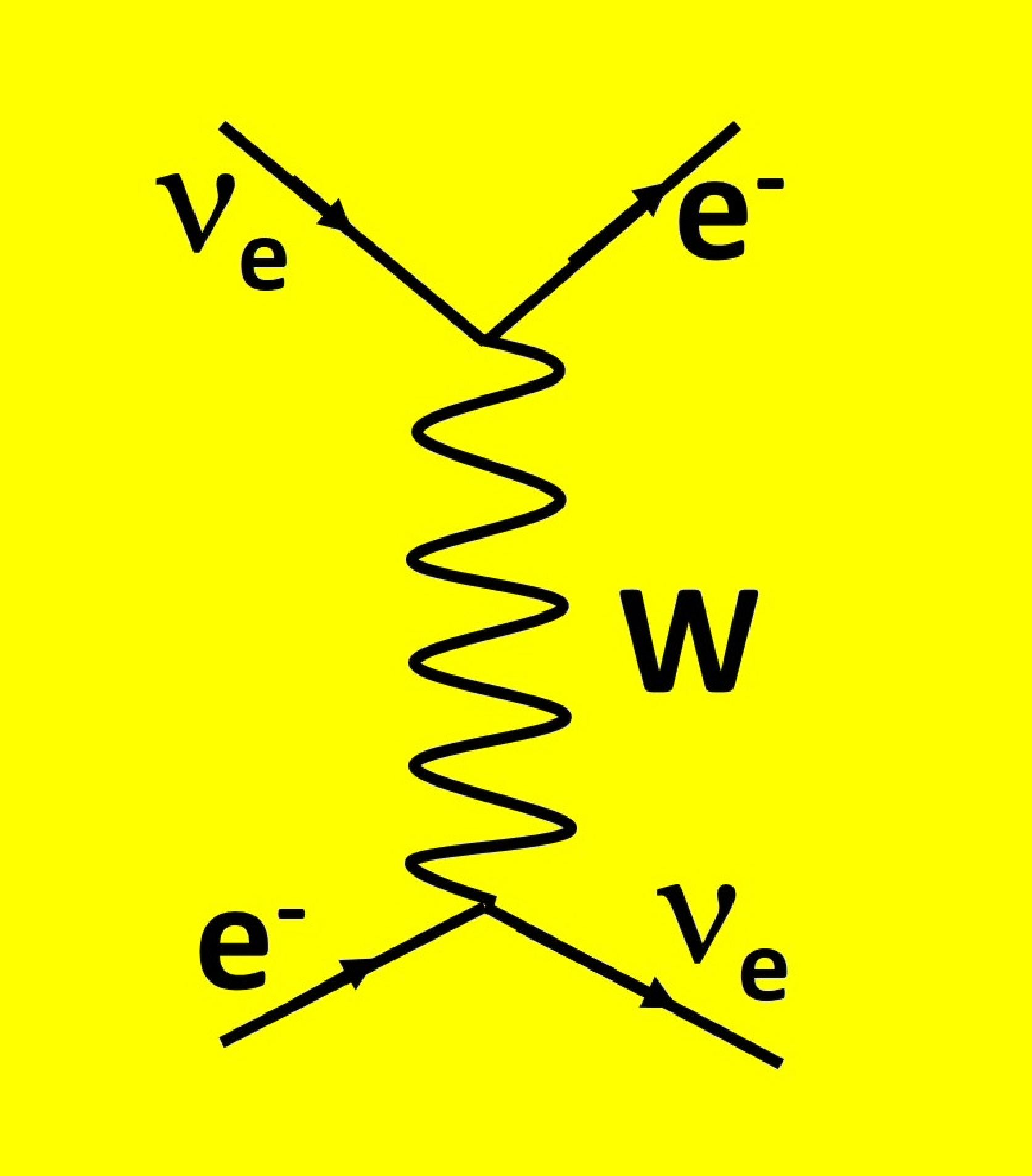}
\renewcommand{\figurename}{Fig.}
\caption{Feynman diagram of the scattering $e^-+\nu_e\to e^-+\nu_e$.}
\label{feynman-diagram-electron-neutrino}
\end{figure}

involving $W$ exchange. The amplitude of this process is given by
\begin{equation}\label{amplitude-weinber-salam-neutrino-electron}
  a=(\frac{g}{\sqrt{2}}J^-_\mu)(\frac{1}{M_W^2-q^2})(\frac{g}{\sqrt{2}}J^+{}^\mu)
\end{equation}
where the first (resp. third) term on the right of (\ref{amplitude-weinber-salam-neutrino-electron}) represents the top current, (resp. the bottom) vertex current, and the term in the middle denotes the propagator (in the Feynman language). Assuming the low-energy limit, namely $|q^2|\ll M_W^2$, one can write $a=\frac{g^2}{2}\frac{1}{M_W^2}J^-_\mu J^+{}^\mu$. By comparing this last amplitude with the one without the $W$-exchange, namely $a'=\frac{4 G_F}{\sqrt{2}}J^-_\mu J^+{}^\mu$, we get $M_W=\sqrt{\frac{g^2\sqrt{2}}{8G_F}}$, where $G_F=1.666\times 10^{-5}  GeV^{-2}$ is the Fermi coupling. Taking into account that $g\sin\theta_W=e$, where $\theta_W$ is the Weinberg angle and $\sin^2\theta_W=0.233$, and $\frac{e^2}{4\pi}=\frac{1}{137}$, we get
\begin{equation}\label{calculated-w-mass}
  M_W=\frac{37.28}{\sin\theta_W} =\frac{37.28}{0.4722}=78.949\, GeV.
\end{equation}
The experimental value is $M_W=80.41\, GeV$.\footnote{The relation between the $W$ mass and the mass of $Z^0$ is $\cos\theta_W=\frac{M_W}{M_{Z^0}}$. Therefore one has $M_{Z_0}=\frac{80.41}{\sqrt{1-0.223}}=91.271$.}  The difference $1.461\, GeV=(80.41-78.949)GeV$ can be reduced by considering higher order Feynman diagrams. However this difference shows that the Feynman method, namely the perturbative method, at the first order is not precise enough. Furthermore, this method cannot be used for strong interactions, since there it does not converge. Therefore the concept of exchange force is not well mathematically defined !

Let us, now, show how encode the interaction $e^-+\nu_e\to \nu_e+e^-$, with a nonlinear quantum propagator, without necessarily assuming low energy limit. We shall consider a nonlinear quantum propagator $V\subset \widehat{(YM)}[i]$, such that $\partial V=N_0\bigcup P\bigcup N_2$, where $N_0$ and $N_2$ are space-like Cauchy data $N_0=e^-\bigcup \nu_e$ and $N_2=e^-\bigcup \nu_e$ at different times $t_0<t_2$, such that $P$ is a time-like $3$-dimensional manifold, with $\partial P=\partial N_0\bigcup \partial N_2$. Furthermore $V$ splits in two nonlinear quantum propagators $V_1$ and $V_2$, such that $V=V_1\bigcup_{N_1} V_2$, where $N_1=W^-$, a virtual intermediate massive particle. See Fig. \ref{neutrino-electron-interaction-nonlinear-quantum-propagator}
 for a picture representing the interaction neutrino-electron. The geometric process is realized in two steps.

a) The massless neutrino $\nu_e$ interacts with the electron $e^-$ by means of a exchangion (handle $h_1$) producing the nonlinear quantum propagator $V_1$ and the virtual massive particle $W^-$. $V_1$ crossing the Goldstone boundary identifies a massive neutrino $\nu'_e$.

b) Since $W^-$ is a virtual particle it is unstable and emits an exchangion (handle $h_2$) that generates the nonlinear quantum propagator $V_2$ bording $W$ with $e^-$ and the massive neutrino $\nu_e'$. Again $V_2$ crossing the Goldstone boundary should transform the massive neutrino $\nu'_e$ into a massless one $\nu_e$.

The quantum energy content of $V$ is expressed by conservation of energy, resumed in the equations (\ref{conservation-energy-relations-in-neutrino-electron-interaction}).
\begin{equation}\label{conservation-energy-relations-in-neutrino-electron-interaction}
  \left\{
\begin{array}{ll}
  0=<\omega_H,\partial V>=&<\omega_H,N_0>+<\omega_H,N_2>+<\omega_H,P>\\
  0=<\omega_H,\partial V_1>=&<\omega_H,N_0>+<\omega_H,N_1>+<\omega_H,P_1>\\
  0=<\omega_H,\partial V_2>=&<\omega_H,N_1>+<\omega_H,N_2>+<\omega_H,P_2>.\\
\end{array}
 \right.
\end{equation}

From the second equation in (\ref{conservation-energy-relations-in-neutrino-electron-interaction}) we get (see Theorem 3.20(I)):
\begin{equation}\label{conservation-energy-relations-in-neutrino-electron-interaction-a}
H_0[e^-]+H_0[\nu_e]-H_0[W]=H_{00}[e^-]+H_{00}[\nu_e]-H_{00}[W]-<\omega_H,P_1>.
\end{equation}
Therefore we have the defect quantum energy:
\begin{equation}\label{conservation-energy-relations-in-neutrino-electron-interaction-b}
\mathfrak{H}[V_1]=H_{00}[e^-]+H_{00}[\nu_e]-H_{00}[W]-<\omega_H,P_1>.
\end{equation}

Similarly from the third equation in (\ref{conservation-energy-relations-in-neutrino-electron-interaction}) we get
\begin{equation}\label{conservation-energy-relations-in-neutrino-electron-interaction-c}
H_0[W]-H_0[\nu_e]-H_0[e^-]=H_{00}[W]-H_{00}[\nu_e]'-H_{00}[e^-]'-<\omega_H,P_2>.
\end{equation}
and the defect quantum energy:
\begin{equation}\label{conservation-energy-relations-in-neutrino-electron-interaction-d}
\mathfrak{H}[V_2]=H_{00}[W]+H_{00}[\nu_e]'-H_{00}[e^-]'-<\omega_H,P_2>.
\end{equation}
From the first equation in (\ref{conservation-energy-relations-in-neutrino-electron-interaction}) we have
\begin{equation}\label{conservation-energy-relations-in-neutrino-electron-interaction-e}
\left\{\begin{array}{ll}
  H_0[\nu_e]+H_0[e^-]-H_0[\nu_e]'-H_0[e^-]'&=H_{00}[\nu_e]+H_{00}[e^-]-H_{00}[\nu_e]'-H_{00}[e^-]' \\
&-<\omega_H,P_1\bigcup_W P_2>.\\
\end{array}\right.
\end{equation}
With the following defect quantum energy:
\begin{equation}\label{conservation-energy-relations-in-neutrino-electron-interaction-f}
\mathfrak{H}[V]=H_{00}[e^-]+H_{00}[\nu_e]-H_{00}[\nu_e]'-H_{00}[e^-]'-<\omega_H,P>.
\end{equation}
Therefore we get

\begin{equation}\label{conservation-energy-relations-in-neutrino-electron-interaction-g}
\mathfrak{H}[V]=\mathfrak{H}[V_1]+\mathfrak{H}[V_2].
\end{equation}
From experimental evaluations of $\mathfrak{H}[V_1]$ at the mass-gap level, denoted $\mathfrak{H}[V_1]_m$, we get:\footnote{Warning. In these calculations the mass of the neutrino does not significatively contribute. In fact the experimental data are $m(\nu_e)=2.2\times 10^{-9} \, GeV$, $m(e^-)=5.11\times 10^{-7}\, GeV$ and $m(W)=80.41\, GeV$. For this motivation we write indifferently $\nu_e$ or $\nu_e'$ in above calculations. In other words, we can simply consider the massless neutrino $\nu_e$, instead of the corresponding massive intercept when it crosses the Goldstone boundary going inside $\widehat{(Higgs)}[i]$. Similarly we can consider the massless neutrino $\nu_e$, instead of the corresponding massive particle, when it crosses the Goldstone boundary going outside $\widehat{(Higgs)}[i]$.}
\begin{equation}\label{conservation-energy-relations-in-neutrino-electron-interaction-h}
\mathfrak{H}[V_1]_m=H_0[e^-]_m+H_0[\nu_e]_m-H_0[W]_m=-80.409\, GeV.
\end{equation}
We can write:
\begin{equation}\label{conservation-energy-relations-in-neutrino-electron-interaction-l}
\mathfrak{H}[V_1]_m=H_{00}[e^-]_m+H_{00}[\nu_e]_m-H_{00}[W]_m-<\omega_H,P_1>_m=-80.409\, GeV.
\end{equation}
$V_1$ is not a thermodynamic quantum exotic nonlinear propagator, (in the sense of Definition \ref{thermodynamic-quantum-exotic-nonlinear-propagator}), since $\mathfrak{H}[V_1]<0$. By a similar calculation we get
\begin{equation}\label{conservation-energy-relations-in-neutrino-electron-interaction-m}
\mathfrak{H}[V_2]_m=80.409\, GeV=H_{00}[W]_m-H_{00}[\nu_e]_m'-H_{00}[e^-]_m'-<\omega_H,P_2>_m.
\end{equation}
We have also
\begin{equation}\label{conservation-energy-relations-in-neutrino-electron-interaction-n}
\left\{\begin{array}{ll}
  \mathfrak{H}[V]_m & =\mathfrak{H}[V_1]_m+\mathfrak{H}[V_2]_m\\
  &= 0\\
  &= H_{00}[e^-]_m+H_{00}[\nu_e]_m-H_{00}[\nu_e]_m'-H_{00}[e^-]_m'-<\omega_H,P>.\\
  \end{array}\right.
\end{equation}
Therefore we can state that the propagator $V$ has zero defect quantum energy, hence $V$ cannot be a thermodynamic quantum exotic nonlinear propagator, but the geometric contribution of $V$ is very important and is given by equation (\ref{conservation-energy-relations-in-neutrino-electron-interaction-o}).
\begin{equation}\label{conservation-energy-relations-in-neutrino-electron-interaction-o}
-<\omega_H,P>_m=H_{00}[\nu_e]_m'+H_{00}[e^-]_m'-H_{00}[e^-]_m-H_{00}[\nu_e]_m.
\end{equation}
Since $H_{00}[\nu_e]_m'+H_{00}[e^-]_m'>H_{00}[e^-]_m+H_{00}[\nu_e]_m$, it follows that $<\omega_H,P>_m<0$, hence $V$ is not a thermodynamic quantum exotic nonlinear propagator. Moreover, we find that the contribution of the geometric structure nonlinear quantum propagator is fundamental to obtain a precise mathematical formulation of quantum reactions, even if these are restricted to weak interactions only.\footnote{Let us remark that in this process we can assume that $W$ is in a stationary-state since the proper quantum time $\sigma^2(\hat t)$ of $W$ is
$\sigma^2(\hat t)=\frac{\hbar}{M_W}=\frac{6.5821\times 10^{-25}\, GeV\, s}{80.385\, GeV}=8.18\times 10^{-27}\, s$,
namely a very short time.} Furthermore, the geometric structure of the quantum super Yang-Mills PDEs, containing the constraint given by the quantum super Higgs PDEs, where live particles with mass-gap, gives a precise geometrodynamic justification to the presence of msssive bosons like $W^{\pm}$ and $Z^0$, but also justifies massive neutrinos, without contradict the Standard Model. This, and other considerations, allows us to state that the Algebraic Topology of quantum super Yang-Mills PDEs includes the Standard Model as a particular case, without incurring in the well-known problems just related to the Standard Model. (See, e.g., the following beautiful Wikipedia link, \href{http://en.wikipedia.org/wiki/Standard_Model}{Standard Model}, where the principal problems related to the Standard Model are carefully considered.)
\vskip 10pt

\appendix{\bf Appendix B: Nonlinear quantum propagators: Some Numerical Results II.}\label{appendixb}
\renewcommand{\theequation}{B.\arabic{equation}}
\setcounter{equation}{0}  

In this appendix we consider the so-called beta neutron decay $n^0\to p^++e^-+\overline{\nu}_e$. This is another example where the Standard Model and the Weinberg-Salam model for electroweak reactions have obtained important results. Really the Standard Model considers neutron made by an up quark and two down quarks (udd), and the beta neutron decay is interpreted by considering that a down quark inside $n$, changes the flavour by means of the decay $d\to u+W^-$, by means of the weak force. Then it follows the decay of the massive boson $W^-$: $W^-\to e^-+\overline{\nu}_e$. The corresponding Feynman diagram is reported in Fig. \ref{feynman-diagram-beta-neutron-decay}.
\begin{figure}[h]
\includegraphics[width=4.3cm]{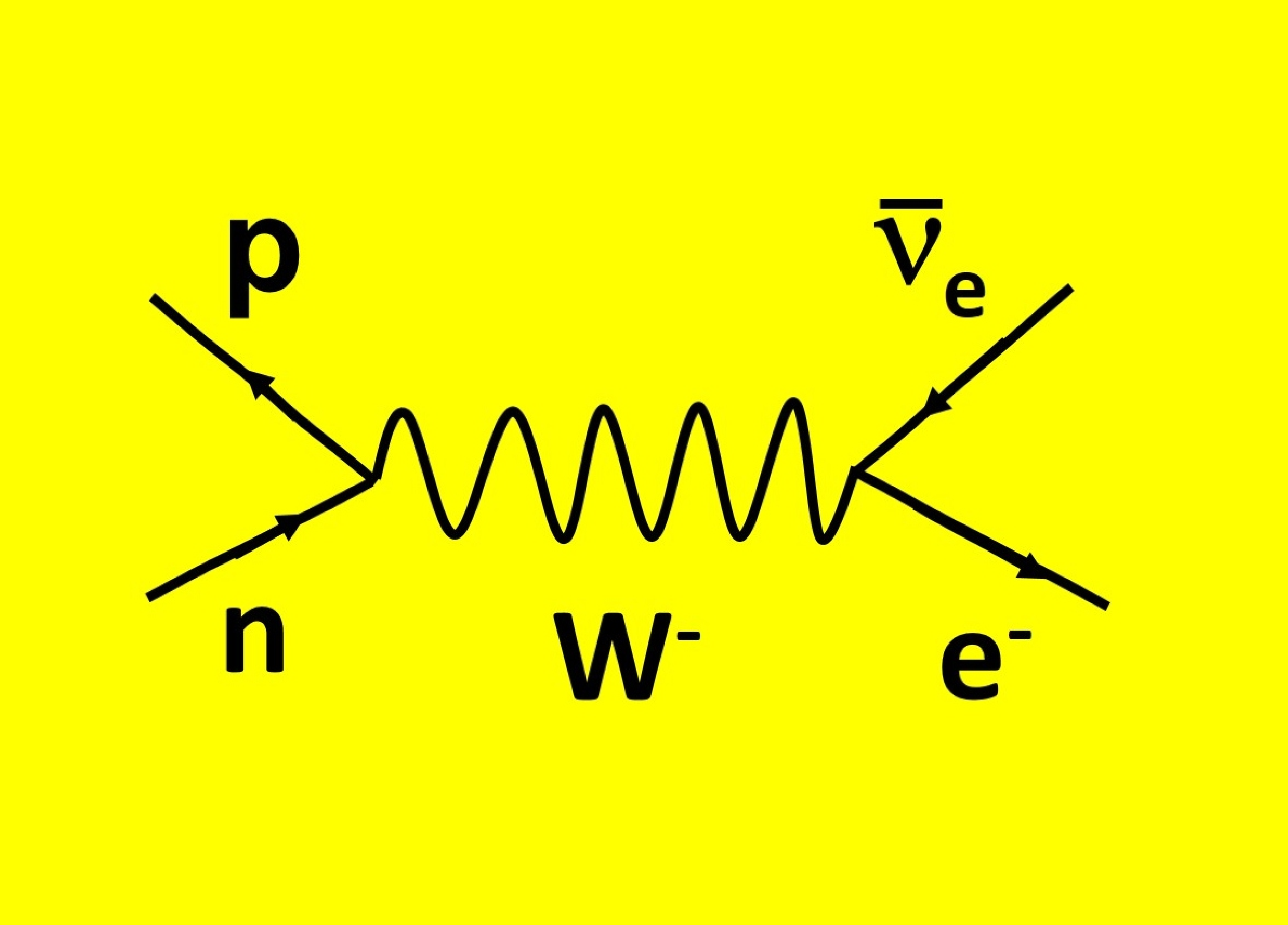}
\renewcommand{\figurename}{Fig.}
\caption{Feynman diagram of the beta neutron decay $n\to p+e^-+\overline{\nu}_e$.}
\label{feynman-diagram-beta-neutron-decay}
\end{figure}

 We will verify that also in this context nonlinear quantum propagators allow us to have a complete justification of the heavy $W^-$ boson as a virtual particle. In fact the observed nonlinear quantum propagator $V$ encoding such a decay has $\partial V=N_0\bigcup P\bigcup N_2$ with $N_0=n$, $N_2=p\bigcup e^-\bigcup\overline{\nu}_e$, and splits into $V=V_1\bigcup_{N_1} V_2$, where $V_1$ is a nonlinear quantum propagator such that $\partial V_1=N_0\bigcup P_1\bigcup N_1$, with $N_1=p\bigcup W^-$, $\partial P_1=\partial N_0\bigcup \partial N_1$ and $\partial V_2=N_1\bigcup P_2\bigcup N_2$, with $\partial P_2=\partial N_1\bigcup \partial N_2$. More precisely the process can be divided in two steps. (See also Fig. \ref{beta-neutron-decay-nonlinear-quantum-propagator}.)
\begin{figure}[h]
\includegraphics[width=5cm]{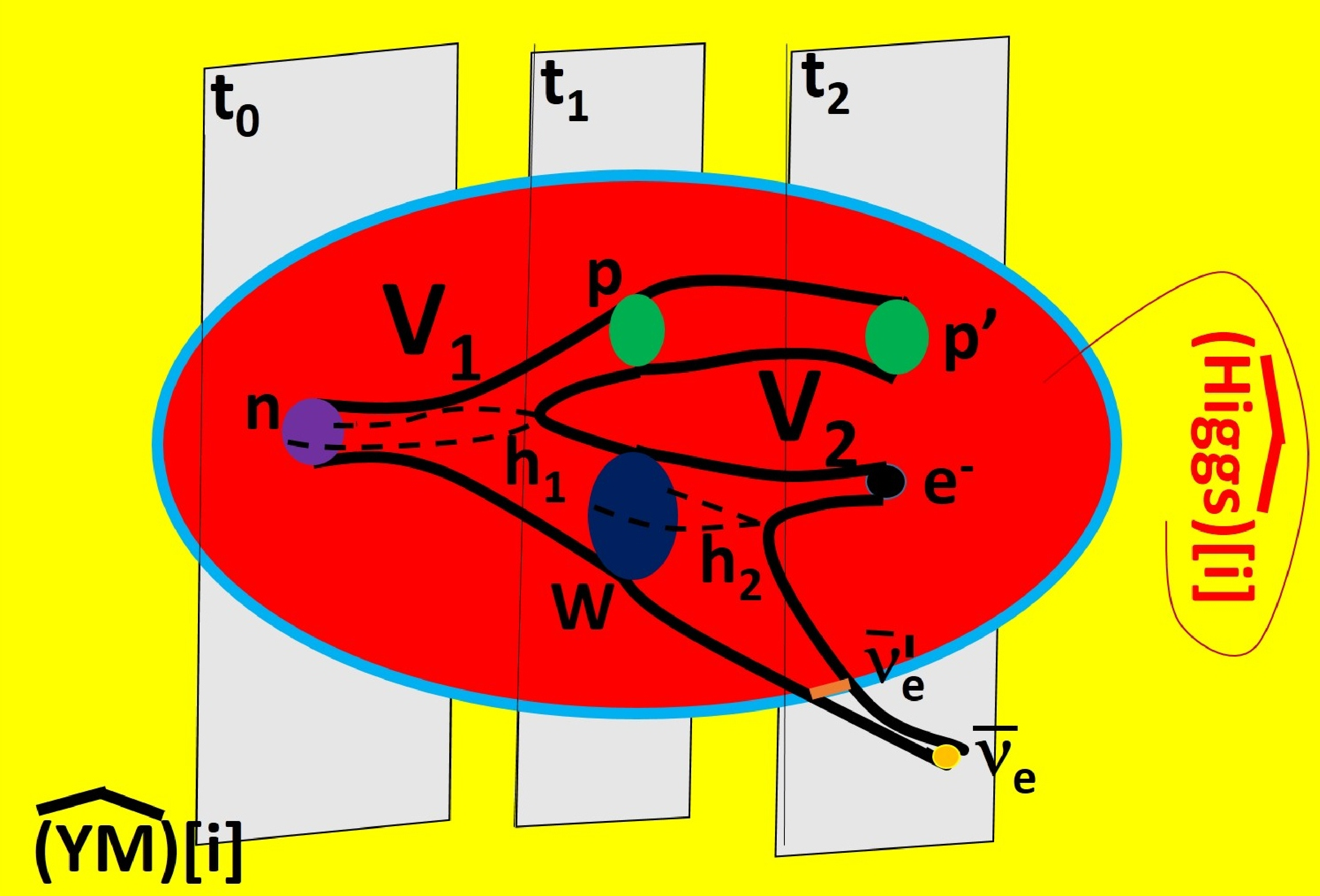}
\caption{Representation of the observed beta neutron decay with production of a virtual massive boson $W^-$. The singular nonlinear quantum propagator $V$ splits in two nonlinear quantum propagators $V_1$ and $V_2$, such that $V=V_1\bigcup_{N_1} V_2$.  The initial Cauchy data is $N_0=n^0$, the intermediate Cauchy data is $N_1=p\bigcup W^-$ and the final Cauchy data is
$N_2=e^-\bigcup \overline{\nu}_e\bigcup p$. The neutrino is considered massless according to the Standard Model. However, since $V_2$ must cross the Goldstone boundary, before going outside $\widehat{(Higgs)}[i]$, where is realized the beta decay of $n^0$, $\overline{\nu}_e$ can be considered massive, i.e., it identifies a virtual massive anti-neutrino $\overline{\nu}_e'$, that is the one observed. This is another example of nonlinear quantum propagator encoding an anomaly-massive quantum virtual particle ($W^-$). (See Theorem 4.23(II).)}
\label{beta-neutron-decay-nonlinear-quantum-propagator}
\end{figure}

 (a) The free neutron is unstable, hence emits an exchangion (handle $h_1$) that generates the nonlinear quantum propagator $V_1$ that bords $n$ with $p$ and the virtual massive particle $W$.

 (b) $W$ is unstable and emits an exchangion (handle $h_2$) generating a nonlinear quantum propagator $V'_2$ that bords $W$ with $N'_2=e^-\bigcup \overline{\nu}_e$. Furthermore, $p$ freely propagates with a steady-state nonlinear quantum propagator $V''_2\cong p\times I$. Therefore $V_2=V'_2\bigcup V''_2$ is obtained from $N_1$ by attaching an handle to $W$, hence encodes the decay of $N_1$.
 By applying the quantum energy conservation law (Theorem 3.20(I)), we get the following equations:

 \begin{equation}\label{conservation-energy-relations-in-beta-neutron-decay}
  \left\{
\begin{array}{ll}
  0&=H_0[n]-H_0[W]-H_0[p]-H_{00}[n]+H_{00}[W]+H_{00}[p]+<\omega_H,P_1>\\
  0&=H_0[W]+H_0[p]-H_0[p]'-H_0[e^-]-H_0[\overline{\nu}_e]\\
  &-H_{00}[W]-H_{00}[p]+H_{00}[p]'+H_{00}[e^-]++H_{00}[\overline{\nu}_e]+<\omega_H,P_2>\\
  0&=H_0[n]-H_0[p]-H_0[e^-]-H_0[\overline{\nu}_e]\\
  &-H_{00}[n]+H_{00}[p]'+H_{00}[e^-]+H_{00}[\overline{\nu}_e]+<\omega_H,P_1\bigcup P_2>.\\
\end{array}
 \right.
\end{equation}
Therefore, we get the following defect quantum energies:
 \begin{equation}\label{defect-quantum-energies-in-beta-neutron-decay}
  \left\{
\begin{array}{ll}
  \mathfrak{H}[V_1]&=H_{00}[n]-H_{00}[W]-H_{00}[p]-<\omega_H,P_1>\\
  &=H_0[n]-H_0[W]-H_0[p]\\
  \mathfrak{H}[V_2]&=H_{00}[W]+H_{00}[p]-H_{00}[p]'-H_{00}[e^-]-H_{00}[\overline{\nu}_e]-<\omega_H,P_2>\\
  &=H_0[W]+H_0[p]-H_0[p]'-H_0[e^-]-H_0[\overline{\nu}_e]\\
  \mathfrak{H}[V]&=H_{00}[n]-H_{00}[p]'-H_{00}[e^-]-H_{00}[\overline{\nu}_e]-<\omega_H,P_1\bigcup P_2>\\
  &=H_0[n]-H_0[p]-H_0[e^-]-H_0[\overline{\nu}_e].\\
\end{array}
 \right.
\end{equation}
We get $\mathfrak{H}[V]=\mathfrak{H}[V_1]+\mathfrak{H}[V_2]$. Furthermore, since $p$, after the decay of $n$ can be assumed free, we can state that $p$ is assumed in a steady-state, hence $H_{00}[p]=H_{00}[p]'$. Let us now consider the mass-gap content of above equations (\ref{defect-quantum-energies-in-beta-neutron-decay}).
 \begin{equation}\label{mass-gap-defect-quantum-energies-in-beta-neutron-decay}
 \left\{
\begin{array}{ll}
 \mathfrak{H}[V_1]_m&=H_{00}[n]_{m}-H_{00}[p]_m-H_{00}[W]_m-<\omega_H,P_1>_m=-80.386\, GeV\\
\mathfrak{H}[V_2]_m&=H_{00}[n]_m-H_{00}[e^-]_m-H_{00}[\overline{\nu}_e]_m-<\omega_H,P_2>_m=80.386\, GeV\\
\mathfrak{H}[V]_m&=H_{00}[n]_m-H_{00}[p]_m-H_{00}[e^-]_m-H_{00}[\overline{\nu}_e]_m-<\omega_H,P>_m\\
&=0.49\times 10^{-3}\, GeV.\\
\end{array}
\right.
\end{equation}

From such estimates one can understand the great importance of nonlinear quantum propagators. Without such geometric structures it should not be possible to justify the realization of a massive quantum virtual particle like $W^-$.\footnote{Let us also emphasize that in above calculations the mass of $\overline{\nu}'_e$, i.e., the corresponding virtual massive particle identified by $V_2$ crossing the Goldstone boundary, does not sensibly contribute.}

\end{appendices}


\begin{thebibliography}{2020}

\bibitem{AAD-ET-AL} G. Aad et al., \textit{Observation on centrality dependent dijet asymmetry in lead-lead collisions at $\sqrt{s_{NN}}=276$ Tev with the ATLAS detector at the LHC}, Phys. Rev. Lett. \textbf{105(17)}(2010), 252303--17.
{[CERN Press Releases,  \href{http://press.web.cern.ch/press-releases/2010/11/lhc-experiments-bring-new-insight-primordial-universe}{\textit{LHC experiments bring new insight into primordial universe}}, November 26, 2010. Retrieved December 2, 2010.]}

\bibitem{ABLIKIM} M. Ablikim et al., \textit{Observation of a charged charmoniumlike structure in $e^+e^- \to \pi^+pi^-J/\psi$ at $\sqrt{s}=4.26$
     GeV}, Phys. Rev. Lett. \textbf{110(25)}(2013), 252001 (8 pp).

\bibitem{AHARONOV-ET-AL} Y. Aharonov, et al. \textit{Revisiting Hardy's paradox: counterfactual statements, real measurements, entanglement and weak values}. Physics Letters A \textbf{301}(2002), 130–-138.


\bibitem{AHARONOV-ROHRLICH} Y. Aharonov and D. Rohrlich, \textit{Quantum Paradoxes: Quantum Theory for the Perplexed}, (Weinheim; Wiley–VCH), 2005.

\bibitem{AHARONOV-POPESCU-ROHRLICH-SKRZYPCZYK} Y. Aharonov, S. Popescu, D. Rohrlich, \& P. Skrzypczyk, \textit{Quantum Cheshire Cats}. New J. Phys. \textbf{15(11)}(2013), 113015 (8 pp).

\bibitem{ATIYAH} M. F. Atyiah, \textit{Bordism and cobordism}, Proc. Camb. Phil. Soc. \textbf{57}(1961), 200--208.

\bibitem{BEHBOOD-ET-ALT} N. Behbood, et alt. \textit{Generation of macroscopic singlet states in a cold atomic ensemble}, Phys. Rev. Letters \textbf{113}(2014), 093601--5.

\bibitem{BELL} J. S. Bell, \textit{On the Einstein Podolsky Rosen paradox}, Physics \textbf{1(3)}(1964), 195-200.

\bibitem{BREIT-WHEELER} G. Breit and J.A. Wheeler,\textit{ Collision of two light quanta}, Phys. Rev. II Ser. \textbf{43}(1934), 1087--1091.

\bibitem{CARR} L. D. Carr, \textit{Negative temperature ?}, Science \textbf{339(6115)}(2013), 42-43.

\href{http://dx.doi.org/10.1126/science.1232558}{DOI:10.1126/science.1232558}

\bibitem{CHANG} D. E. Chang, V. Gritsev, G. Morigi, V. Vuleti\'c, M. D. Lukin and E. A. Demler, \textit{Crystallization of strongly interacting photons in a nonlinear optical fibre}, Nature Physics, \textbf{4}(2008), 884--889.

\bibitem{FLORIAN-SASSOT-STRATMANN-VOGELSANG} D. de Florian, R. Sassot, M. Stratmann, and W. Vogelsang,
    \textit{Evidence for Polarization of Gluons in the Proton}, Phys. Rev. Lett. \textbf{113}(2014), 012001-- .


\bibitem{DENKMAYR-GEPPERT-SPONAR-LEMMEL-MATZKIN-TOLLAKSEN-HASEGAWA} T. Denkmayr, H. Geppert, S. Sponar, H. Lemmel, A. Matzkin, J. Tollaksen, \& Y. Hasegawa, \textit{Observation of a quantum Cheshire Cat in a matter-wave interferometer experiment}, Nature Communications
\textbf{5(4492)}(2014), 1--7.

\bibitem{DEW1}  B. S. de Witt, \textit{Quantum theory of gravity. I: The canonical theory}, Phys. Rev. \textbf{160(5)}(1967), 1113--1148;
\textit{Quantum theory of gravity. II: The manifestly covariant theory}, Phys. Rev. \textbf{162(5)}(1967), 1195--1239;
\textit{Quantum theory of gravity. III: Applications of the covariant theory}, Phys. Rev. \textbf{162(5)}(1967), 1239--11256.

\bibitem{DEW2}  B. S. de Witt, \textit{Supermanifolds}, Cambridge Univers. Press, Cambridge 1986.

\bibitem{DIRAC} P. A. Dirac, \textit{Cosmological models and the large numbers hypothesis},
Proc. R. Soc. Lond. A \textbf{338(1615)}(1974), 439-446. doi: 10.1098/rspa.1974.0095

\bibitem{DRECHSEL-TIATOR} D. Drechsel and L. Tiator, \textit{Threshold pion photoproduction on nucleons}, J. Phys. G. Nucl. Part. Phys. \textbf{18}(1992), 449-497.
\href{http://wwwkph.kph.uni-mainz.de/MAID//JPhysG18/JPhysG18.pdf}{http://wwwkph.kph.uni-mainz.de/MAID//JPhysG18/JPhysG18.pdf}


\bibitem{DUTTA} D. Dutta, \textit{Exclusive Pion Photoproduction at 12 GeV}. Massachusetts Institute of Technology. \href{http://www.jlab.org/Hall-C/talks/04_09_02/Dutta.pdf}{http://www.jlab.org/Hall-C/talks/04-09-02/Dutta.pdf}

\bibitem{DUTTA-GAO-LEE} D. Dutta, H. Gao and T.-S. H. Lee, \textit{Study of nucleon resonances with double polarization observables of pion photoproduction}, \href{http://arxiv.org/pdf/nucl-th/0111005.pdf}{\tt arXiv:nucl-th/0111005}

\bibitem{DYSON1} F. J. Dyson, \textit{Do gravitons exist ?},(Lecture given at Boston University, 8 Nov. 2005.)

\bibitem{DYSON2} F. J. Dyson, \textit{Is a graviton detectable ?}, Int. J. Modern Phys. A \textbf{28(25)}(2013), 1330041-1--1330035-14.

\bibitem{EDMONDS} A. R. Edmonds, \textit{Angular Momentum in Quantum Mechanics}, Third Edition. Princeton University Press, 1965, Princeton, USA.

\bibitem{EINSTEIN-PODOLSKY-ROSEN} A. Einstein, B. Podolsky and N. Rosen,  \textit{Can quantum-mechanical description of physical reality be considered complete ?}, Phys. Rev. \textbf{47}(1935), 777--780.

\bibitem{DIAKONOV-PETROV} D. Diakonov and V. Petrov, \textit{A heretical view on linear Regge trajectories}, \href{http://arxiv.org/pdf/hep-ph/0312144.pdf}{\tt arXiv:hep-ph/0312144}.

\bibitem{EDWARDS-TAYLOR} S. F. Edwards and J. B. Taylor, \textit{Negative temperature states of two-dimensional plasmas and vortex fluids}, Proc. R. Soc. London A \textbf{336(1606)}(1974), 257--271.

\bibitem{FRADKIN} E. Fradkin, \textit{Quantum physics: Debut of the quarter electron}, Nature. \textbf{452}(2008), 822--823.

\bibitem{GIAMMARCHI} M. G. Giammarchi et al., \textit{Search for electron decay mode $e\to\gamma+\nu$ with prototype of Borexino detector}, Physics Letters B. \textbf{525}(2002), 29--40.

\bibitem{GROMOV} M. Gromov, \textit{Partial Differential Relations}. Springer-Verlag,
Berlin 1986.

\bibitem{HIRSCH} M. Hirsch \textit{Differential Topology},  Springer-Verlag, New York, 1976.

\bibitem{HORAVA-WITTEN} P. Ho\v rava and E. Witten, \textit{Eleven dimensional supergravity on a manifold with boundary}, Nuclear Physics B \textbf{475 (1)}(1996), 94--114. \href{http://arxiv.org/abs/hep-th/9510209}{\tt  arXiv:hep-th/9510209}.

\bibitem{HOYLE} F. Hoyle, \textit{On the Cosmological Problem}, Monthly Notices of the Royal Astronomical Society \textbf{109(3)}(1949), 365-371.

\bibitem{HOYLE-NARLIKAR} F. Hoyle and J. V. Narlikar, \textit{Mach's Principle and the Creation of Matter},
Proc. R. Soc. Lond. A \textbf{273(1352)}(1963), 1-11.

\bibitem{LUKIN-VULETIC} O. Firstenberg, T. Peyronel, Qi-Yu. Liang, A. V. Gorshkov, M. D. Lukin and V. Vuleti\'c, \textit{Attractive photons in a quantum nonlinear medium}, Nature \textbf{502(03 October)}(2013), 71--75.


\bibitem{LIU} Z. Q. Liu et al., \textit{Study of $e^+e^-\to\pi^+\pi^-J/\psi$ and Observation of a Charged Charmoniumlike State at Belle}, Phys. Rev. Lett. \textbf{110(25)}(2013), 252002-9.

\bibitem{LYCH-PRAS} V. Lychagin and A. Pr\'astaro, \textit{Singularities of Cauchy data,
characteristics, cocharacteristics and integral cobordism},
Diff. Geom. Appls. \textbf{4}(1994), 283--300.

\bibitem{MATZKIN-PAN} A. Matzkin and A. K. Pan, \textit{Three-box paradox and 'Cheshire cat grin': the case of spin-1 atoms}, J. Phys. A: Math. Theor. \textbf{46(31)}(2013), 315307 (11 pp).


\bibitem{MILNOR1} J. Milnor, \textit{On manifolds homeomorphic to the $7$-sphere}. Ann. of Math. \textbf{64(2)}(1956), 399--405.

\bibitem{NASH} J. Nash, \textit{Real algebraic manifolds}. Ann. of Math.
\textbf{56(2)}(1952), 405--421.

\bibitem{NEUMANN} J. von Neumann, In Mathematische Grundlagen der Quantenmechanik, Springer, Berlin, (1932/1955). (English translation by R. T. Beyer, Princeton University Press.)

\bibitem{NOCERA-BALL-FORTE-RIDOLFI-ROJO} E. R. Nocera, R. D. Ball, S. Forte, G. Ridolfi, and J. Rojo, \textit{A first unbiased global determination of polarized PDFs and their uncertainties}. \href{http://arxiv.org/abs/arXiv:1406.5539}{\tt arXiv:1406.5539}.

\bibitem{NORMANN-BACHALL-GOLDHABER} E. B. Norman, J. N. Bachall and M. Goldhaber, \textit{Improved limit on charge conservation derived from ${}^{17}Ga$ solar neutrino experiments}. Phys. Rev. D \textbf{53}(1996), 4086--4088.

\bibitem{PIKE-MACKENROTH-HILL-ROSE} O. J. Pike, F. Mackenroth, E. G. Hill and S. J. Rose, \textit{A photon–photon collider in a vacuum hohlraum}, Nature Photonics May  (2014). \href{http://www.nature.com/nphoton/journal/vaop/ncurrent/full/nphoton.2014.95.html}{DOI: doi:10.1038/nphoton.2014.95}

\bibitem{PONTRJAGIN} L. S. Pontrjagin, \textit{Smooth manifolds and their applications homotopy theory}. Amer. Math. Soc. Transl. \textbf{11}(1959), 1--114.

\bibitem{PRADHAN} T. Pradhan, \textit{Electron decay}. \href{http://arxiv.org/pdf/hep-th/0312325.pdf}{\tt arXiv:hep-th/0312325v1}.

\bibitem{PRA5} A. Pr\'astaro, \textit{Quantum geometry of PDE's}, Rep. Math.
Phys. \textbf{30(3)}(1991), 273--354.

\bibitem{PRA6} A. Pr\'astaro, \textit{Geometry of super PDE's.}, Geometry in Partial Differential
Equations, A. Pr\'astaro and Th. M. Rassias (eds.), World Scientific
Publishing Co., Singapore (1994), 259--315.

\bibitem{PRA7} A. Pr\'astaro, \textit{Geometry of quantized super PDE's}, Amer. Math. Soc. Transl.
\textbf{167(2)}(1995), 165--192.

\bibitem{PRA8} A. Pr\'astaro, \textit{Quantum geometry of super PDE's}, Rep. Math. Phys \textbf{37(1)}(1996), 23--140.

\bibitem{PRAS13} A. Pr\'astaro, \textit{Geometry of PDE's. I: Integral bordism groups in PDE's}. J. Math. Anal. Appl. \textbf{319}(2006), 547--566; \textit{Geometry of PDE's. II: Variational PDE's and integral bordism groups}.
J. Math. Anal. Appl. \textbf{321}(2006), 930--948.

\bibitem{PRAS1} A. Pr\'astaro, \textit{Geometry of PDEs and Mechanics}, World Scientific Publishing, River Edge, NJ, 1996, 760 pp. ISBN 9810225202.

\bibitem{PRAS2} A. Pr\'astaro, \textit{(Co)bordism in PDEs and quantum PDEs}, Rep. Math. Phys. \textbf{38(3)}(1996), 443-455. DOI: 10.1016/S0034-4877(97)84894-X.

\bibitem{PRAS3} A. Pr\'astaro, \textit{(Co)bordism groups in quantum PDEs}, Acta Appl. Math. \textbf{64(2)}(2000), 111-217.

\bibitem{PRAS4} A. Pr\'astaro, \textit{Quantum manifolds and integral (co)bordism groups in quantum partial differential equations}, Nonlinear Anal. Theory Methods Appl. \textbf{47/4}(2001), 2609-2620.

\bibitem{PRAS5} A. Pr\'astaro, \textit{Quantum super Yang-Mills equations: Global existence and mass-gap}, Dynamic Syst. Appl. \textbf{4}(2004), 227-232. (Eds. G. S. Ladde, N. G. Madhin and M. Sambandham), Dynamic Publishers, Inc., Atlanta, USA. ISBN:1-890888-00-1.

\bibitem{PRAS11} {A. Pr\'astaro}, \textit{Quantized Partial Differential Equations}, World Scientific Publ., Singapore, 2004.

\bibitem{PRAS12} A. Pr\'astaro, \textit{Conservation laws in quantum super PDE's}, \href{Melbourne-Florida-2005.pdf}{Proceedings of the Conference on Differential \& Difference Equations and Applications (eds. R. P. Agarwal \& K.
Perera)}, Hindawi Publishing Corporation, New York (2006), 943--952.

\bibitem{PRAS14} A. Pr\'astaro, \textit{(Co)bordism groups in quantum super PDE's.I: Quantum supermanifolds},
Nonlinear Anal. Real World Appl. \textbf{8(2)}(2007), 505--538; \textit{(Co)bordism groups in quantum super PDE's.II: Quantum super PDE's}, Nonlinear Anal. Real World Appl. \textbf{8(2)}(2007), 480--504; \textit{(Co)bordism groups in quantum super PDE's.III: Quantum super Yang-Mills equations}, Nonlinear Anal. Real World Appl. \textbf{8(2)}(2007), 447--479.

\bibitem{PRAS15} A. Pr\'astaro, \textit{On quantum black-hole solutions of quantum super Yang-Mills equations}, Dynamic Syst. Appl. \textbf{5}(2008), 407-414. (Eds. G. S. Ladde, N. G. Madhin, C. Peng and M. Sambandham), Dynamic Publishers, Inc., Atlanta, USA. ISBN: 1-890888-01-6.

\bibitem{PRAS19} A. Pr\'astaro, \textit{Surgery and bordism groups in quantum partial differential equations.I: The quantum Poincar\'e conjecture}. Nonlinear Anal. Theory Methods Appl. \textbf{71(12)}(2009), 502--525; \textit{Surgery and bordism groups in quantum partial differential equations.II: Variational quantum PDE's}. Nonlinear Anal. Theory Methods Appl. \textbf{71(12)}(2009), 526--549.


\bibitem{PRAS21-1} A. Pr\'astaro,   \href{http://link.springer.com/chapter/10.1007/978-1-4939-1106-6_18}{\textit{Extended crystal PDE's}}.  \href{http://www.springer.com/mathematics/analysis/book/978-1-4939-1105-9}{Mathematics Without Boundaries: Surveys in Pure Mathematics. P. M. Pardalos and Th. M. Rassias (Eds.) Springer-Heidelberg New York Dordrecht London, (2014), 415--481.} ISBN 978-1-4939-1106-6 (Online) 978-1-4939-1105-9 (Print). \href{http://dx.doi.org/10.1007/978-1-4939-1106-6}{DOI: 10.1007/978-1-4939-1106-6}.
    \href{http://arxiv.org/abs/0811.3693}{\tt arXiv:0811.3693[math.AT]}.


\bibitem{PRAS21} A. Pr\'astaro, \textit{Quantum extended crystal PDE's}, \href{http://nonlinearstudies.com/index.php/nonlinear}{Nonlinear Studies \textbf{18(3)}(2011), 447--485.} \href{http://arxiv.org/abs/1105.0166}{\tt arXiv:1105.0166[math.AT]}.

\bibitem{PRAS22} A. Pr\'astaro, \textit{Quantum extended crystal super PDE's}, Nonlinear Analysis. Real World Appl. \textbf{13(6)}(2012), 2491--2529. \href{http://dx.doi.org/10.1016/j.nonrwa.2012.02.014}{DOI: 10.1016/j.nonrwa.2012.02.014}. \href{http://arxiv.org/abs/0906.1363}{\tt arXiv:0906.1363[math.AT].}

\bibitem{PRAS23} A. Pr\'astaro, \textit{Exotic heat PDE's}, Commun. Math. Anal. \textbf{10(1)}(2011), 64--81. \href{http://arxiv.org/abs/1006.4483}{\tt arXiv:1006.4483[math.GT]}; \textit{Exotic heat PDE's.II},  Essays in Mathematics and its Applications. (Dedicated to Stephen Smale for his $80^{th}$ birthday.)  (Eds. P. M. Pardalos and Th.M. Rassias). Springer-Heidelberg, New York Dordrecht London, (2012), 369--419. \href{http://dx.doi.org/10.1007/978-3-642-28821-0}{DOI: 10.1007/978-3-642-28821-0.} \href{http://arxiv.org/abs/1009.1176}{\tt arXiv:1009.1176[math.AT].}

\bibitem{PRAS25} A. Pr\'astaro, \href{http://link.springer.com/chapter/10.1007/978-1-4614-3498-6_36}{\textit{Exotic $n$-d'Alembert PDE's and stability}}. \href{http://www.springer.com/mathematics/book/978-1-4614-3497-9?cm_mmc=NBA-_-Jun-12_EAST_10792128-_-product-_-978-1-4614-3497-9}{Nonlinear Analysis: Stability, Approximation and Inequalities. (Dedicated to Themistocles M. Rassias for his 60th birthday.) G. Georgiev (USA), P. Pardalos (USA) and H. M. Srivastava (Canada) (eds.), Springer Optimization and its Applications Volume 68(2012), 571-586}. ISBN 978-1-4614-3498-6.
\href{http://dx.doi.org/10.1007/978-1-4614-3498-6}{DOI: 10.1007/978-1-4614-3498-6}.
  \href{http://arxiv.org/abs/1011.0081}{\tt arXiv:1011.0081[math.AT]}.

\bibitem{PRAS26} A. Pr\'astaro, \href{http://link.springer.com/chapter/10.1007/978-1-4939-1106-6_18}{\textit{Exotic PDE's}}. \href{http://www.springer.com/mathematics/analysis/book/978-1-4939-1123-3}{Mathematics Without Boundaries: Surveys in Interdisciplinary Research. P. M. Pardalos and Th. M. Rassias (Eds.) Springer-Heidelberg New York Dordrecht London, (2014), 471--531.} ISBN 978-1-4939-1123-3 (print) 978-1-4939-1124-0 (eBook).
    \href{http://dx.doi.org/10.1007/978-1-4939-1124-0}{DOI: 10.1007/978-1-4939-1124-0}.
\href{http://arxiv.org/abs/1101.0283}{\tt arXiv:1101.0283[math.AT]}.


\bibitem{PRAS27} A. Pr\'astaro, \textit{Quantum exotic PDE's}. Nonlinear Anal. Real World Appl. \textbf{14(2)}(2013), 893--928. \href{http://dx.doi.org/10.1016/j.nonrwa.2012.04.001}{DOI: 10.1016/j.nonrwa.2012.04.001.}. \href{http://arxiv.org/abs/1106.0862}{\tt arXiv:1106.0862[math.AT]}.

\bibitem{PRAS28} A. Pr\'astaro, \textit{Strong reactions in quantum super PDE's. I: Quantum hypercomplex exotic super PDE's}. \href{http://arxiv.org/abs/1205.2894}{\tt arXiv:1205.2894}, 1--39.

\bibitem{PRAS29} A. Pr\'astaro, \textit{Strong reactions in quantum super PDE's. II: Nonlinear quantum propagators}. \href{http://arxiv.org/abs/1205.2894}{\tt arXiv:1205.2894}, 40--80.


 \bibitem{PRAS30} A. Pr\'astaro, \textit{The Landau's problems. I-II}. \textit{The Landau's problems. I: The Goldbach's conjecture proved}; \textit{The Landau's problems. II: Landau's problems solved}.
\href{http://arxiv.org/abs/1205.2894}{\tt arXiv:1205.2894v11}.

 \bibitem{PRASTARO-Zbl1162-58002} A. Pr\'astaro, Review of the paper by S.J. Brain and S. Majid, \textit{Quantization of twistor theory by cocycle twist}, Commun. Math. Phys. \textbf{284(3)}(2008),713--774, in \href{http://www.zentralblatt-math.org/MIRROR/zmath/en/advanced/?q=an:05530504&type=pdf&format=complete}{Zentralblatt MATH \textbf{Zbl 1162.58002}}.


 \bibitem{PRASTARO-MR2870866} A. Pr\'astaro, Review of the paper by N. Seiberg and W. Taylor, \textit{Charge lattices and consistency of 6D supergravity}, J. High Energy Phys. \textbf{6(1)}(2011),1029-8479, in \href{http://www.ams.org/mathscinet/pdf/2870866.pdf}{MathScinet \textbf{MR2870866}}.


\bibitem{PURCELL-POUND} E. M. Purcell and R. V. Pound, \textit{A nuclear spin system at negative temperature}. Phys. Rev. \textbf{81(2)}(1951), 279--280.

\bibitem{ROTHMAN-BOUGHN} T. Rithman and S. Boughn, \textit{Can gravitons be detected ?}, \href{http://arxiv.org/pdf/gr-qc/0601043.pdf}{\tt arXiv:gr-qc/0601043}

\bibitem{SAKHAROV} A. D. Sakharov, \textit{Violation of CP invariance, C asymmetry, and
baryon asymmetry of the universe}, Pis'ma Zh. Eksp. Teor. Fiz. \textbf{5}(1967), 32-35, reprinted in:
Kolb, E. W., Turner, M. S. (Eds.), The Early Universe, pp. 371–373;
Lindley, D. et al. (Eds.), Cosmology and Particle Physics, pp. 106–109; Sov. Phys. Usp. \textbf{34(5)}(1991), 392–393;
Usp. Fiz. Nauk \textbf{161(5)}(1991), 61–64.

\bibitem{SCHNEIDER} U. Schneider, \textit{Negative absolute temperature for motional degrees of freedom}, Science \textbf{339(6115)}(2013), 52-55.
       \href{http://www.sciencemag.org/content/339/6115/52}{DOI: 10.1126/science.1227831}.

 \bibitem{SCHRODINGER} E. Schr\"odinger, \textit{Die gegenw\"artige situation in der Quantunmechanik}, Die Naturwissenschaften, \textbf{23}(1935), 807--812, 823-828, 844-849.

\bibitem{SMALE} S. Smale, \textit{Generalized Poincar\'e conjecture in dimension greater than four}. Ann. of Math. \textbf{74(2)}(1961), 391--406.

 \bibitem{STONG} R. E. Stong, \textit{Notes on Bordism Theories}. Amer. Math. Studies. Princeton Univ. Press, Princeton, 1968.

\bibitem{SWITZER} A. S. Switzer,  \textit{Algebraic Topology-Homotopy and Homology}, Springer-Verlag, Berlin,
1976.

\bibitem{THOM} R. Thom, \textit{Quelques propri\'et\'e globales des vari\'et\'es
diff\'erentieles}, Comm. Math. Helv. \textbf{28}(1954), 17--86.

\bibitem{NIEUWENHUIZEN} P. van Nieuwenhuizen, \textit{Supergravity}, Phys. Reports. \textbf{68(4)}(1981), 189--398.

\bibitem{WALL2} C. T. C. Wall,  \textit{Surgery on Compact Manifolds}, London Math. Soc. Monographs \textbf{1},
Academic Press, New York, 1970; 2nd edition (ed. A. A. Ranicki),
Amer. Math. Soc. Surveys and Monographs \textbf{69}, Amer. Math.
Soc., 1999.

\bibitem{WITTEN} E. Witten, \textit{String theory dynamics in various dimensions}, Nuclear Physics B \textbf{443 (1)}(1995), 85--126. \href{http://arxiv.org/abs/hep-th/9503124}{\tt arXiv:hep-th/9503124}.


\bibitem{ZEILINGER} A. Zeilinger et al. \textit{Quantum entanglement of high angular momenta}, Science 2 \textbf{338(6107)}(2012), 640--643. \href{http://www.sciencemag.org/content/338/6107/640}{DOI. 10.1126/science.1227193}.

\end{thebibliography}
\end{document}